\theoremstyle{plain}
\newtheorem{theorem}{Theorem}
\newtheorem{lemma}[theorem]{Lemma}
\newtheorem{proposition}[theorem]{Proposition}
\newtheorem{prop}[theorem]{Proposition}
\newtheorem{corollary}[theorem]{Corollary}
\theoremstyle{definition}
\newtheorem{definition}[theorem]{Definition}
\newtheorem{defn}[theorem]{Definition}
\newtheorem{example}[theorem]{Example}
\theoremstyle{remark}
\newtheorem*{remark}{Remark}
\newtheorem*{rmk}{Remark}
\newtheorem*{acknowledgments}{Acknowledgments}
\newcommand{\Z}{\mathbb Z}    
\newcommand{\Q}{\mathbb Q}    
\newcommand{\R}{\mathbb R}    
\newcommand{\C}{\mathbb C}    
\newcommand{\PP}{\mathbb P}   
\newcommand{\T}{\mathbb T}    
\newcommand{\N}{\mathbb N}  
\newcommand{\F}{\mathcal F} 
\newcommand{\suchthat}{\ : \ }
\newcommand{\<}{\langle}   
\renewcommand{\>}{\rangle} 
\newcommand{\Log}{\operatorname{Log}}
\newcommand{\am}{{\mathcal{A}}}
\newcommand{\ignore}[1]{\relax}
\newcommand{\dd}{\partial}
\newcommand{\Hom}{\operatorname{Hom}}
\newcommand{\OS}{\operatorname {OS}}  
\newcommand{\Gr}{\operatorname{Gr}}
\newcommand{\gys}{\operatorname{Gys}}
\newcommand{\Star}{\operatorname{Star}}
\newcommand{\Link}{\operatorname{Link}}
\newcommand{\shad}{\operatorname{res}}
\newcommand{\Vol}{\operatorname{Vol}}
\newcommand{\ind}{\operatorname{ind}}
\newcommand{\id}{\operatorname{id}}
\newcommand{\cp}{{\mathbb C}{\mathbb P}}
\newcommand{\tp}{{\mathbb T}{\mathbb P}}
\newcommand{\LL}{\mathcal L}
\begin{document}

\title{Tropical Homology}

\author{Ilia Itenberg}
\address{Institut de Math\'{e}matiques de Jussieu--Paris Rive Gauche\\
Sorbonne Universit\'{e}\\  
4 place Jussieu, 75252 Paris Cedex 5, France \\
and 
D\'{e}partement de Math\'{e}mati\-ques et Applications,
Ecole Normale Sup\'{e}rieure \\
45 rue d'Ulm, 75230 Paris Cedex 5, France} 
\email{ilia.itenberg@imj-prg.fr}
\author{Ludmil Katzarkov}
\address{Universit\"at Wien, Department of Mathematics, Vienna, Austria and University of Miami, Department of Mathematics, Miami, FL, USA}
\email{lkatzarkov@gmail.com}
\author{Grigory Mikhalkin}
\address{Universit\'e de Gen\`eve,  Math\'ematiques, Villa Battelle, 1227 Carouge, Suisse}
\email{grigory.mikhalkin@unige.ch}
\author{Ilia Zharkov}
\address{Kansas State University, 138 Cardwell Hall, Manhattan, KS 66506 USA}
\email{zharkov@math.ksu.edu}

\begin{abstract}
Given a tropical variety $X$ and two non-negative integers
$p$ and $q$ we define a homology group $H_{p,q}(X)$ which is a 
finite-dimensional vector space over $\Q$. 
We show that if $X$ is a smooth tropical variety that can be represented as the tropical limit
of a 1-parameter family of complex projective varieties, then $\dim H_{p,q}(X)$ coincides with the Hodge number $h^{p,q}$ of a general member of the family. 
\end{abstract}
\maketitle

\footnote{The research was partially supported by the NSF FRG grants DMS-0854989 and DMS-1265228.
L.K. was  supported by Simons research grant, NSF DMS-150908, ERC Gemis, DMS-1265230,
DMS-1201475, OISE-1242272 PASI and Simons collaborative Grant HMS.
Research of G.M. was partially supported by the grant TROPGEO of the European Research Council,
and by the grants 140666, 141329, 159240, 159581 and NCCR ``SwissMAP"
of the Swiss National Science Foundation.}

\section{Introduction}
\subsection{Homology theory for tropical varieties}
Tropical varieties are certain finite-dimen\-sional polyhedral complexes enhanced 
with the {\em tropical structure}. This is a geometric structure that can be thought of
as a version of an affine structure for polyhedral complexes.
For example, the tropical projective space $\tp^N$ is a smooth projective tropical variety
homeomorphic to the $N$-simplex. The restriction of the tropical structure to
the relative interior of a $k$-dimensional face $\sigma$ of $\tp^N$ turns $\sigma$ into 
$\R^k$ (with the tautological 
affine structure of $\R^k=\Z^k\otimes\R$). 
A projective tropical $n$-variety $X$ is a certain $n$-dimensional
polyhedral complex in $\tp^N$.

A tropical structure on $X$ can be used to define a
natural coefficient system $^\Z\F_p$. This system
is not locally constant everywhere,
but it is constant on the relative interiors of faces 
of $X$. Furthermore,
it is a constructible cosheaf of abelian groups.
The tropical $(p,q)$-homology group $H_{p,q}(X)$
is the $q$-dimensional homology group of $X$ with coefficients in $\F_p= \ ^\Z\F_p\otimes\Q$. 

An important example of projective tropical varieties
is provided by the {\em tropical limit} of an algebraic family
$Z_w \subset\cp^N$, $w\in\C$, $t=|w|\to \infty$, of complex projective $n$-dimensional varieties.
It may be shown (cf. e.g. the fundamental theorem
of tropical geometry of \cite{MS})
that the sets $\Log_{t}(Z_w)\subset\tp^N$ 
converge to an $n$-dimensional balanced weighted polyhedral complex $X$ in $\tp^N$.
If $X$ is a smooth tropical variety, then for a generic $w$ the complex variety
$Z_w$ is smooth.
The main result of this paper establishes the equality between $\dim H_{p,q}(X)$ and the Hodge numbers $h^{p,q}(Z_w)$.

\begin{theorem}[Main Theorem]\label{thm:main} 
Let $\mathcal Z \subset \C\PP^N \times \mathcal D^*$ be a complex analytic one-parameter family of projective varieties over the punctured disc $\mathcal D^*$. Assume that $\mathcal Z$ admits a tropical limit $X\subset \T\PP^N$, which is a smooth projective
$\Q$-tropical variety (see Definition \ref{def:qtrop}).
Then, the dual spaces $\Hom (H_q(X;\F_p), \Q)$ to the tropical homology groups $H_q(X;\F_p)$
are naturally isomorphic to the associated graded pieces $W_{2p}/W_{2p-1}$
of the weight filtration in the limiting mixed Hodge structure
on $H^{p+q} (\mathcal Z_\infty, \Q)$, where $\mathcal Z_\infty$ is the canonical
fiber of the family $\mathcal Z$.
\end{theorem}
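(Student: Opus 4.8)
The plan is to show that both sides of the claimed isomorphism are computed by canonically isomorphic complexes: on the tropical side, the $\Q$-dual of the polyhedral chain complex defining $H_\bullet(X;\F_p)$; on the Hodge side, the weight-$2p$ part of the monodromy weight (Steenbrink) spectral sequence computing the limiting mixed Hodge structure on $H^{p+q}(\mathcal{Z}_\infty,\Q)$.

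First I would reduce to a semistable degeneration. After a finite base change $w\mapsto w^k$ and a sequence of toroidal modifications of the total space (semistable reduction), one may assume $\mathcal{Z}$ extends to a family over $\mathcal{D}$ whose central fibre $\mathcal{Z}_0=\bigcup_i Y_i$ is a reduced simple normal crossings divisor. The base change only replaces the monodromy $T$ by $T^k$, leaving the monodromy weight filtration unchanged, and the toroidal modifications are isomorphisms over $\mathcal{D}^*$, hence change neither the nearby fibre nor its limiting mixed Hodge structure; on the tropical side they replace $X$ only by a subdivision and a rescaling of its polyhedral structure, under both of which the cosheaf $\F_p$, and hence $H_\bullet(X;\F_p)$, are invariant. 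So it suffices to treat the semistable model, for which the dual intersection complex $\Delta(\mathcal{Z}_0)$, with its natural integral affine structure, is identified with a polyhedral subdivision of $X$: a $j$-dimensional face $\sigma\subset X$ corresponds to a connected closed stratum $Y_\sigma=\bigcap_{i\in I_\sigma}Y_i$ with $|I_\sigma|=j+1$, a smooth projective variety of dimension $n-j$, and $\tau\supseteq\sigma$ corresponds to $Y_\tau\subseteq Y_\sigma$.

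Next I would invoke the classical description of the limiting mixed Hodge structure (Schmid, Clemens, Steenbrink): the graded piece $\bigl(W_{2p}/W_{2p-1}\bigr)H^{p+q}(\mathcal{Z}_\infty,\Q)$ is the $E_\infty=E_2$ term of the weight spectral sequence, which degenerates at $E_2$ by relative hard Lefschetz, and is the cohomology at the appropriate spot of a complex whose terms are direct sums of Tate-twisted cohomology groups $H^{2p-2i}(Y_\tau,\Q)(-i)$, $i\ge 0$, of the strata $Y_\tau$ of $\mathcal{Z}_0$, the differential being a signed combination of Gysin corestrictions and restrictions along codimension-one stratum inclusions. I would then reorganise this complex according to the face $\sigma$ of $X=\Delta(\mathcal{Z}_0)$ of lowest dimension through which each stratum is seen, so that its degree-$q$ term becomes $\bigoplus_{\dim\sigma=q}\bigl(\bigoplus_{\tau\supseteq\sigma}(\text{primitive part of }H^{2(p-\ell(\sigma,\tau))}(Y_\tau,\Q))\bigr)$, and the heart of the proof is to establish a canonical isomorphism between the inner sum and $\Hom(\F_p(\sigma),\Q)$. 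This is where the smoothness hypothesis of Definition \ref{def:qtrop} is used: near $\sigma$ the tropical variety is a product $\R^{\dim\sigma}\times\Sigma_{M_\sigma}$ of an affine space with the Bergman fan of a matroid $M_\sigma$, and the combinatorial structure of $\F_p(\sigma)$ --- a splitting indexed (roughly) by the flags of flats of $M_\sigma$ --- is precisely what matches, flat by flat, the primitive Lefschetz decompositions of the cohomologies of the strata $Y_\tau$ with $\tau\supseteq\sigma$; one also checks that the ``variable'' non-Tate parts of the $H^\bullet(Y_\tau)$ assemble into an acyclic subcomplex and so do not contribute to $W_{2p}/W_{2p-1}$.

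It then remains to verify that this identification intertwines the Gysin--restriction differential of the weight spectral sequence with the cellular coboundary of the complex $\Q$-dual to the one defining $H_\bullet(X;\F_p)$ --- a reconciliation of sign conventions and of the combinatorics of Gysin maps with that of face inclusions. Granting this, the two complexes are canonically isomorphic; taking cohomology at the $q$-th spot yields the desired natural isomorphism $\bigl(W_{2p}/W_{2p-1}\bigr)H^{p+q}(\mathcal{Z}_\infty,\Q)\cong\Hom(H_q(X;\F_p),\Q)$, with naturality inherited from the functoriality of semistable reduction (up to subdivision), of the weight spectral sequence, and of the reorganisation over $\Delta(\mathcal{Z}_0)$; compactness (projectivity) of $X$ ensures the tropical homology groups are finite-dimensional. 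The step I expect to be the main obstacle is the local comparison above: extracting, purely from the matroid $M_\sigma$, the precise weight-$2p$ contribution of the strata $Y_\tau$ to the Steenbrink complex and proving it reproduces $\Hom(\F_p(\sigma),\Q)$ exactly, cancellation of the non-combinatorial cohomology included. This rests on a delicate bookkeeping of primitive decompositions and Gysin maps in which the matroid exchange axiom and the lattice of flats carry the essential combinatorial load; a secondary, more routine but still substantial difficulty is arranging the semistable reduction so that the dual intersection complex reproduces the given $X$ up to subdivision, which relies on the toroidal nature of a tropical degeneration.
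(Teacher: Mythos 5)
There is a genuine gap, and it starts with your identification of $X$ with the dual intersection complex of a semistable central fibre. For a projective family the tropical limit $X\subset\T\PP^N$ is compact: besides the bounded (finite mobile) faces it has unbounded mobile faces and sedentary faces on the toric boundary, and all of these index summands of the cellular complex $C_\bullet(X;\F_p)$. Only the \emph{finite mobile} faces form the Clemens complex of the central fibre (Proposition \ref{thm:snc}); the unbounded faces correspond to intersections with horizontal toric divisors and the sedentary faces have no stratum of $\mathcal Z_0$ attached to them at all, yet they carry nontrivial $\F_p$-coefficients. Hence the Steenbrink row $\tilde E^1_{\bullet,p}$, built only from strata $Z_\Delta$ for finite mobile $\Delta$, cannot be \emph{isomorphic} as a complex to the dual of $C_\bullet(X;\F_p)$ after any regrouping; what holds is a quasi-isomorphism, and the mechanism that makes the extra faces disappear is precisely the double complex $K^{(p)}_{\bullet,\bullet}$ together with the acyclicity of the residue (Koszul-type) complexes of infinite faces (Lemma \ref{lemma:koszul}). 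Your proposal contains no counterpart of this step. Relatedly, your local statement is not the correct one: $\F^p(\sigma)$ does not split as a direct sum over $\tau\supseteq\sigma$ of primitive pieces of $H^{2(p-\ell)}(Y_\tau)$; in the paper $\overline\F_p(\Delta)$ arises as the cokernel (degree-zero homology) of an exact Gysin complex, namely the dual of Deligne's weight spectral sequence for the open stratum $Z^\circ_\Delta$. Note also that $\tilde E^1_{q,p}$ contains summands $H_{2l}(Z_\Delta)$ with $\dim\Delta=p+q-2l<q$ when $2l>p$, which cannot be assigned to $q$-dimensional faces $\sigma\subseteq\Delta$ at all, so the proposed reorganisation does not even match term counts.

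The second gap is the missing geometric input that makes the local comparison possible. For an arbitrary semistable model the strata $Y_\tau$ have no reason to carry Hodge--Tate cohomology, and ``checking that the non-Tate parts assemble into an acyclic subcomplex'' is not a feasible bookkeeping step --- there is nothing to make it true unless the model is chosen adapted to the tropical structure. The paper builds a specific Mumford-type model from a c-extendable unimodular triangulation of $X$ (Proposition \ref{lemma:triangulation}) and then \emph{proves}, via the tropical degree-one computation for Bergman fans (Example \ref{example:degree1}, Corollary \ref{lemma:degree1}, Lemma \ref{lemma:matroid_amoeba}), that every open stratum $Z^\circ_\Delta$ is the complement of a hyperplane arrangement realizing the matroid of the relative fan $\Sigma_\Delta$; Theorem \ref{OS} then identifies its pure, Tate-type cohomology with the local groups $\F^\bullet$. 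This is exactly where smoothness of $X$ is used, and it is the load-bearing theorem of the whole argument, not a sign-and-primitive-decomposition reconciliation. So although your overall strategy (weight spectral sequence plus a matroidal local comparison over the faces of $X$) points in the same direction as the paper, as written the proposal misidentifies the two complexes to be compared and leaves the decisive local input unproved.
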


Under the assumptions of Theorem \ref{thm:main},
the limiting mixed Hodge structure is of Hodge-Tate type.
That is, only even associated graded pieces 
$\Gr^W_{2p} H^k (\mathcal Z_\infty ; \Q)=W_{2p}/W_{2p-1}$
are non-trivial and they have Hodge $(p,p)$-type. Hence, the dimensions of the pieces in the Hodge filtration on  $H^{k} (\mathcal Z_\infty; \Q)$ can be recovered from the weight filtration. On the other hand, $\dim F^p H^k (Z_w, \Q) = \dim F^p H^k (\mathcal Z_\infty, \Q)$, cf \cite{Clemens}. 
Thus, we can conclude that the Hodge numbers $h^{p,q}(Z_w)$ 
agree with the dimensions of the spaces $\Gr^W_{2p} H^{p+q} (\mathcal Z_\infty; \Q)$.

\begin{corollary}\label{cor:main}
The Hodge numbers $h^{p,q} (Z_w)$ of a general fiber equal the dimensions of the tropical homology groups $H_q(X;\F_p)$.
\end{corollary}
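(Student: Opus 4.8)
The plan is to deduce the statement from Theorem~\ref{thm:main} together with two additional ingredients: the standard comparison between the Hodge numbers of a smooth projective variety and the associated graded pieces of a limiting mixed Hodge structure, and the fact that in our situation this limiting structure is of Hodge--Tate type. Since $X$ is smooth, a general fibre $Z_w$ of $\mathcal Z$ is smooth and projective, so $H^k(Z_w;\Q)$ carries a pure Hodge structure of weight $k$ while $H^k(\mathcal Z_\infty;\Q)$ carries the limiting mixed Hodge structure. By Schmid's nilpotent orbit theorem the two are identified as vector spaces in such a way that the Hodge filtration $F^\bullet$ on $H^k(\mathcal Z_\infty)$ is the limit of the Hodge filtrations on $H^k(Z_w)$; in particular the dimensions of the steps are preserved, so that $\dim_\C\Gr_F^p H^k(\mathcal Z_\infty;\C)=\dim_\C\Gr_F^p H^k(Z_w;\C)=h^{p,k-p}(Z_w)$.

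Next I would use the Hodge--Tate property: under the hypotheses of Theorem~\ref{thm:main}, the limiting mixed Hodge structure on every $H^k(\mathcal Z_\infty;\Q)$ is of Hodge--Tate type, so $\Gr^W_m H^k(\mathcal Z_\infty;\Q)=0$ for odd $m$ and $\Gr^W_{2\ell}H^k(\mathcal Z_\infty;\Q)$ is pure of Hodge type $(\ell,\ell)$ for every $\ell$. I expect to obtain this from the combinatorial structure of a semistable model of $\mathcal Z$ afforded by the smooth $\Q$-tropical limit $X$: the special fibre of such a model is glued from smooth projective toric pieces, whose cohomology is of Tate type, so the terms of the Steenbrink weight spectral sequence computing $H^k(\mathcal Z_\infty)$ are Tate, and Tate-ness passes to the abutment; this is precisely the spectral sequence analysed in the proof of Theorem~\ref{thm:main}. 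Granting it, Deligne's splitting gives $\dim_\C\Gr_F^p H^k(\mathcal Z_\infty;\C)=\sum_m\dim_\C\Gr_F^p\!\big(\Gr^W_m H^k(\mathcal Z_\infty;\C)\big)$, and for a piece of pure type $(\ell,\ell)$ the graded quotient $\Gr_F^p$ vanishes unless $p=\ell$, in which case it is the whole piece; hence only the summand $m=2p$ survives and, writing $k=p+q$,
\[
h^{p,q}(Z_w)=\dim_\C\Gr_F^p H^{p+q}(\mathcal Z_\infty;\C)=\dim_\Q\Gr^W_{2p}H^{p+q}(\mathcal Z_\infty;\Q)=\dim_\Q\big(W_{2p}/W_{2p-1}\big).
\]

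Finally, Theorem~\ref{thm:main} identifies $W_{2p}/W_{2p-1}$ with the dual space $\Hom(H_q(X;\F_p),\Q)$, which has the same finite dimension as $H_q(X;\F_p)$; together with the displayed chain of equalities this yields $h^{p,q}(Z_w)=\dim_\Q H_q(X;\F_p)$. The main obstacle is the Hodge--Tate property: the mixed-Hodge-theoretic bookkeeping above is purely formal, but it genuinely relies on the weight and Hodge filtrations on $H^k(\mathcal Z_\infty)$ interacting in the special \emph{combinatorial} manner that a smooth tropical limit forces, and it would fail outright for a general degeneration of smooth projective varieties.
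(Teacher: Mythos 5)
Your proposal is correct and follows essentially the paper's own route: Hodge--Tate-ness of the limiting mixed Hodge structure deduced from the combinatorial special fibre via the Steenbrink--Illusie spectral sequence, the standard comparison $\dim \Gr_F^p H^{p+q}(\mathcal Z_\infty;\C)=h^{p,q}(Z_w)$, and then Theorem \ref{thm:main} to identify $\Gr^W_{2p}$ with the dual of $H_q(X;\F_p)$. The one inaccuracy is your description of the strata of the special fibre as ``smooth projective toric pieces'': by Proposition \ref{thm:snc} the strata $Z_\Delta$ are compactifications of hyperplane arrangement complements sitting inside toric strata, not toric varieties themselves, but their cohomology is still of Tate type (the paper cites Danilov--Khovanskii), so your argument is unaffected.
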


Note that different choices
of the central fiber for $\mathcal Z$
may lead to different triangulations of 
the polyhedral complex $X$.
Meanwhile $\F_p$, $\F^p$, $H_{p,q}(X)$ and
$H^{p,q}(X)$ are central fiber choice free as
none of them makes use of such a choice.
Furthermore,
$\F_p$ and $\F^p$ are equally well defined for an arbitrary tropical variety $X$ 
even if it cannot be presented as the tropical limit of a family of complex varieties.
The groups $H_q(X;\F_p)$ and $H^q (X; \F^p)$ give a homology theory in tropical geometry.

We prefer to work with homology rather than with cohomology because the tropical homology have a more transparent geometric meaning. For cohomology theory one can consider the constructible {sheaf} $\F^p$ on $X$ whose stalks are dual to the spaces $\F_p$.
Differential forms and currents may also be considered
on tropical varieties, see \cite{Lagerberg}.
Note that the work
\cite{ChambertLoir-Ducros} makes use of the pull-backs of such forms to Berkovich spaces.
A recent work \cite{Jell-Shaw-Smacka}
provides a link between usage of such differential forms and
the tropical homology definition considered in this paper.

We prove Theorem \ref{thm:main} by providing a quasi-isomorphism between the tropical cellular complexes and the dual row complexes of the $E^1$-term of the weight spectral sequence for the limiting mixed Hodge structure (see Theorem \ref{theorem:main}).  

\begin{remark}
If $X$ comes as the tropical limit of complex varieties $Z_w$, 
then the geometric meaning of the tropical coefficients $\F_p$ and $\F^p$ originates 
from the tropical collapse map  $\pi: Z_w \to X$. This map comes from logarithmically  mapping  $Z_w$ to its amoeba 
and then collapsing the amoeba to $X$. 
The sheaf $\F^p$ can then be identified with the direct image $R^p\pi_* \underline{\Q}$, 
and the Leray spectral sequence which calculates $H^{p+q} (Z_w; \Q)$ 
has the second term $E_2^{q,p}=H^q (X; \F^p)$. 
\end{remark}

\subsection{Tropical Euler characteristics}
Each coefficient system $\F_p$ independently
gives homology groups $H_q(X;\F_p)$ 
for all dimensions $q$.
The corresponding Euler characteristic 
\begin{equation}\label{trop-chi_p}
\chi_p(X)=
\sum\limits_{q=0}^n (-1)^q \dim H_q(X;\F_p) 
\end{equation}
is a basic invariant of the tropical variety
$X$ which is especially easy to compute.
The corresponding classical invariants 
\begin{equation}\label{class-chi_p}
\chi_p(Z_w)=\sum\limits_{q=0}^n (-1)^q
h^{p,q}(Z_w)
\end{equation}
were introduced by Hirzebruch \cite{Hirz}
in the form of $\chi_y$-genus
$\chi_y(Z_w)=\sum\limits_{p=0}^n
\chi_p(Z_w)y^p.$ 
Clearly, Theorem 1 implies that 
\begin{equation}\label{chiravenstvo}
\chi_p(X)=\chi_p(Z_w).
\end{equation}
E.g. $\chi_0(X)$ is nothing else but
the conventional Euler characteristic of $X$,
while $\chi_0(Z_w)$ is the holomorphic 
Euler characteristic (arithmetic genus)
of $Z_w$.
As usual for the Euler 
characteristic, the alternation of signs
in (\ref{trop-chi_p}) provides additional invariance
properties. 

Since the definition of the limiting Hodge structure, 
there was developed a way to compute it with the help
of a central fiber $Z_0$ of the family $\mathcal Z$,
{\it i.e.}, through its extension over ${\mathcal D}\supset \mathcal D^*$, 
see \cite{Steen} and references therein.
Note that the choice of $Z_0$ is not unique,
and different choices yield different homology data
of $Z_0$.

In the same time, the K\"ahler manifolds $Z_w$, $w\neq 0$, are 
symplectomorphic,
and thus have 
the same topological homology groups. 
One may expect that
it should be possible to find
such a central fiber
$Z_{\operatorname{nearby}}$
(instead of $Z_0$)
that 
$Z_{\operatorname{nearby}}$ is symplectomorphic
to $Z_w$, $w\neq 0$.
The problem is that 
(in the case of non-trivial family
${\mathcal Z}$)
such $Z_{\operatorname{nearby}}$
cannot carry a complex structure in the conventional
sense
(the holomorphic tangent subbundle  
$T^{1,0}(Z_{\operatorname{nearby}})$
in $T(Z_{\operatorname{nearby}})\otimes\C$
cannot stay transversal
to $T(Z_{\operatorname{nearby}})\otimes\R$ 
due to behavior of $T^{1,0}(Z_w)$ for small $w\neq 0$).

The notion of {\em motivic nearby fiber} 
(see \cite{DenefLoeser}, \cite{Bittner} and
\cite{Steen}) avoids this
problem by defining 
a class $\psi$
in the Grothendieck ring of 
varieties 
so that it should correspond to 
$Z_{\operatorname{nearby}}$ (would it
exist as a variety). 
The class $\psi$ can be expressed as a
certain linear combination of strata of 
$Z_0$
and 
does not depend on
ambiguity in the choice of $Z_0$
thanks to the alternation of signs in
the expression of $\psi$ via $Z_0$. 

When we pass from $Z_w$ 
to the motivic nearby fiber $\psi$ 
we loose some homological 
information. 
For example, $\psi$ is the class of the empty set 
for the degeneration corresponding to a smooth
tropical elliptic curve (or, more generally,
a smooth tropical Abelian variety).
Under the assumption of 
Theorem \ref{thm:main}, the motivic nearby fiber $\psi$ 
is a linear combination 
of the powers of the class ${\mathbb L}$ of the affine line, and thus carries
the same amount of
data as the $E$-polynomial of Deligne-Hodge. 
\begin{corollary}
Under the assumptions of Theorem 1 we have 
\begin{equation}\label{epsi}
E(\psi)=E({\mathcal Z}_\infty)
=\sum\limits_{p=0}^n
\chi_p u^pv^p.
\end{equation}
\end{corollary}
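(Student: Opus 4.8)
The plan is to run Theorem \ref{thm:main} through the Hodge--Deligne realization of the motivic nearby fiber, after which the corollary becomes bookkeeping. First I would recall the defining property of $\psi$: the class $\psi$ is built (see \cite{DenefLoeser}, \cite{Bittner}, \cite{Steen}) so that under the $E$-polynomial ring homomorphism on the Grothendieck ring of varieties it is sent to the alternating sum of the $E$-polynomials of the cohomology groups of the canonical fiber, each carrying its limiting mixed Hodge structure; that is,
\[
E(\psi)=\sum_{k\ge 0}(-1)^{k}\,E\!\left(H^{k}(\mathcal Z_\infty;\Q)\right)=E(\mathcal Z_\infty).
\]
This gives the first equality in (\ref{epsi}) and uses only that $\mathcal Z$ is a one-parameter family, not smoothness of $X$.

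Next I would use the Hodge--Tate property of the limiting mixed Hodge structure recorded right after Theorem \ref{thm:main}: under the present hypotheses $\Gr^{W}_{m}H^{k}(\mathcal Z_\infty;\Q)=0$ for odd $m$, while $\Gr^{W}_{2p}H^{k}(\mathcal Z_\infty;\Q)$ is pure of Hodge type $(p,p)$. Hence all off-diagonal Hodge--Deligne numbers vanish, so
\[
E\!\left(H^{k}(\mathcal Z_\infty;\Q)\right)=\sum_{p}\dim\Gr^{W}_{2p}H^{k}(\mathcal Z_\infty;\Q)\;u^{p}v^{p},
\]
and therefore $E(\mathcal Z_\infty)=\sum_{k\ge 0}(-1)^{k}\sum_{p}\dim\Gr^{W}_{2p}H^{k}(\mathcal Z_\infty;\Q)\,u^{p}v^{p}$.

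Finally I would substitute the isomorphisms of Theorem \ref{thm:main} (equivalently Corollary \ref{cor:main}), $\dim\Gr^{W}_{2p}H^{p+q}(\mathcal Z_\infty;\Q)=\dim H_q(X;\F_p)$, put $k=p+q$, and collect the resulting double sum by the exponent of $uv$. By the definition (\ref{trop-chi_p}) of $\chi_p(X)$ this produces $\sum_{p=0}^{n}\chi_p\,u^{p}v^{p}$, which is the second equality in (\ref{epsi}). As a bonus, since under these hypotheses $\psi$ is a $\Z$-linear combination of powers of ${\mathbb L}$ and the $E$-polynomial is injective on such combinations, (\ref{epsi}) not only computes $E(\psi)$ but pins down the class $\psi$ itself.

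The only step with genuine content is the first one: the compatibility of the motivic nearby fiber with the $E$-polynomial realization and the identification of the Hodge structures appearing there with the limiting mixed Hodge structure on $H^{\bullet}(\mathcal Z_\infty)$; for this I would simply invoke the references above. Granting Theorem \ref{thm:main}, everything else is formal. The one place that demands care is the last rearrangement: the (co)homological shift built into the nearby-cycle formalism and the normalizations of the weight and Hodge filtrations must be tracked against the conventions fixing (\ref{trop-chi_p}) and the $E$-polynomial, so that the alternating signs come out exactly as in (\ref{epsi}).
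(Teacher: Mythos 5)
Your proposal is correct in substance, and for the second equality it is the paper's own argument: there $E(\mathcal Z_\infty)$ is rewritten using the Hodge--Tate property of the limiting mixed Hodge structure together with Theorem \ref{thm:main}, exactly as in your second and third paragraphs. For the first equality you take a genuinely different route from the paper's primary one: you invoke wholesale the compatibility of the motivic nearby fiber with the Hodge--Deligne realization, so that $E(\psi)$ is the alternating sum over $k$ of the $E$-polynomials of $H^k(\mathcal Z_\infty;\Q)$ with their limit mixed Hodge structures, citing \cite{DenefLoeser}, \cite{Bittner}, \cite{Steen}. The paper instead argues by additivity of the $E$-polynomial applied to the explicit presentation of $\psi$ as a signed combination of strata of the simple normal crossing central fiber constructed in Proposition \ref{thm:snc}, whose open strata are hyperplane arrangement complements, so their $E$-polynomials are controlled by Theorem \ref{OS}. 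The paper does sanction your shortcut as an alternative (``see also section 11.2.7 of \cite{Steen}''), so your citation-based step is legitimate; what the explicit route buys is self-containedness, since it reuses the model already built for the proof of Theorem \ref{thm:main}, while your route is shorter and, as you note, requires nothing about smoothness of $X$ for that particular equality.

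The one substantive caveat is the sign bookkeeping you flag at the end but do not carry out. Under the standard conventions (the $E$-polynomial of the limit mixed Hodge structure weighted by $(-1)^k$ over cohomological degree $k$, and $\chi_p$ as in \eqref{trop-chi_p}), collecting your double sum gives $\sum_q(-1)^{p+q}h^{p,q}=(-1)^p\chi_p$ as the coefficient of $u^pv^p$, not $\chi_p$; for instance, for a maximally degenerate family of genus $2$ curves one computes $E(\psi)=-1-uv$, while $\chi_0+\chi_1 uv=-1+uv$. This factor $(-1)^p$ is a matter of normalization in \eqref{epsi} itself, about which the paper's own proof is equally silent, so it is not a defect of your argument relative to the paper's; but be aware that the rearrangement does not literally ``produce'' the printed right-hand side unless the convention for $E$ (or for $\chi_p$) is adjusted to absorb the sign.
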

\begin{proof}
The second equality of \eqref{epsi}
is the combination of
the definition of the $E$-polynomial
and Theorem 1.
The first equality follows
from the additivity of $E$-polynomial
with the help of 
the 
description of central fiber
from 
Proposition
\ref{thm:snc}
and the well-known $E$-polynomial
computation for the hyperplane
arrangement complements
(cf. Theorem \ref{OS}). 
For the first equality see also section 11.2.7 of \cite{Steen}.
\end{proof}

In general, $E(\psi)$ 
does not determine individual numbers
$h^{p,q}(Z_w)=\dim H_q(X;\F_p)$.
In the special case 
of complete intersections,
the Lefschetz hyperplane section theorem
determines $h^{p,q}(Z_w)$ for $p+q\neq n$,
and thus $E(\psi)$ (which can easily be
read from the combinatorial data of $X$)
suffices to recover $h^{p,q}(Z_w)$.
This observation appeared in \cite{KatzStapledon}
for the case 
of hypersurfaces. 

\begin{rmk}
Note that the tropical limit of ${\mathcal Z}$
introduced in Definition
\ref{fine-conv} does not require to make
any choice for the central fiber $Z_0$
whatsoever.
Different choices of $Z_0$
correspond to different triangulations
of $X$ while the homology groups
$H_q(X;\F_p)$
do not require
introduction of such an additional structure.
\end{rmk}

\begin{rmk}
According to Theorem \ref{thm:main}, 
the Hodge
numbers of an $n$-dimensional projective variety 
$\mathcal Z_w\subset\cp^N$ can be recovered
from its tropical limit $X$ 
provided that 
$X$ is a smooth regular projective $\Q$-tropical
variety.
In this case,
since $\F_0=\Q$ 
is 
a constant cosheaf (see Example \ref{examples:tropical-homology1}), 
the dimensions of homology groups of $X$ with rational coefficients 
are determined by the Hodge numbers $h^{p,q}(Z_w)$.
For example, if $h^{0,n}(Z_w)=h^{0,0}(Z_w)=1$
while $h^{0,k}(Z_w)=0$ for $k=1,\dots,n-1$
(which is the case e.g. for a certain class 
of Calabi-Yau $n$-folds such as the K3-surfaces for $n = 2$), the topological space 
$X$ (which does not have to be a manifold)
is a rational homology sphere. 
\end{rmk} 

\begin{acknowledgments}
We 
are grateful to Sergey Galkin and Luca Migliorini for 
useful discussions and explanations.
The present work started during the fall 2009 semester ``{\it Tropical geometry}" at MSRI,
and we would like to thank the MRSI for hospitality and excellent working conditions.
\end{acknowledgments}

\section{Tropical varieties and their homology}
Main results in this paper concern smooth 
$\Q$-tropical varieties embedded in some projective space. 
In this section we adapt definitions of tropical geometry to this special case. The notion of tropical homology can be defined for more general tropical spaces, e.g. singular or non-compact, see \cite{MZh} but its relation to the (mixed) Hodge structures of singular complex varieties
is to be understood. 

\subsection{Polyhedral complexes in tropical projective space}\label{section:polyhedral}

The tropical affine space $\T^N$ is the topological space $[-\infty,+\infty)^N$ (homeomorphic to the
$n$-th power of a half-open interval) enhanced with an integral affine structure defined as follows. We stratify the space $\T^N$ by
$$\T^\circ_I:= \{y=(y_1,\dots,y_N)\in \T^N 
\ :\ y_i=-\infty, i\in I \ \text{ and } \ y_i>-\infty, i\notin I \} \cong \R^{N-I},
$$ 
where $I$ runs over subsets of $\{1,\dots,N\}$.
We set $\T_I \cong \T^{N-I}$ to be the closure of $\T^\circ_I$ in $\T^N$.
On each $\T^\circ_I$ the integral affine structure
is induced from $\R^{N-I}$, and for pairs $I\subset J$ the projection maps $\T^\circ_I \to \T^\circ_J$ are  $\Z$-affine linear.
Here and later we use $N-I$ in the exponent to denote the product of $N-|I|$ factors in the complement of the subset $I$ in $\{1,\dots,N\}$.

Let $B_I$ be a $(k-|I|)$-dimensional ball in $\T^\circ_I$. A $k$-dimensional {\em $I$-ball} $B$ in $\T^N$ is the $\epsilon$-neighborhood of $B_I$ in $\T^N$ (using the projection map $\pi^\T_I: \T^N \to \T_I$): 
\begin{equation}\label{eq:i-ball}
B=\{y\in \T^N \suchthat \pi^\T_I(y) \in B_I \text{ and } y_i < \log \epsilon , i \in I\}
\end{equation}  
for some $\epsilon >0$. In particular, an $\emptyset$-ball is just an ordinary $k$-dimensional ball in $\R^N \subset \T^N$. For $I\neq \emptyset$ an $I$-ball is a $k$-dimensional manifold with corners.
The {\em boundary} $\dd B$ of an $I$-ball $B$ is defined as $\overline B \setminus B$, that is, we exclude from its topological boundary all strata at infinity, and then take the closure in $\T^N$.

The tropical projective space $\T\PP^N$ can be defined as the quotient of $\T^{N+1}\setminus (-\infty, \dots, -\infty)$ by the equivalence $(x_0,\dots x_N)\sim (x_0+\lambda, \dots x_N+\lambda)$ for any $\lambda\in \R$. In particular, as a topological space, $\T\PP^N$ is homeomorphic to an $N$-simplex.
Alternatively, $\T\PP^N$
can be glued from $N+1$ affine charts $U^{(i)} =\{x_i\ne -\infty\} \cong \T^N$ with coordinates $y^{(i)}_k= x_k-x_i, i\ne k$.
Every two charts $U^{(i)}$ and $U^{(j)}$
are identified along $U^{(i)} \cap U^{(j)} \cong \T^{N-1}\times \R$ via
$y^{(i)}_k= y^{(j)}_k - y^{(j)}_i$ for $k\ne i, j$, and $y^{(i)}_j= - y^{(j)}_i$.

For any subset $I \subset  \{0,\dots N\}$ we denote by $\T\PP_I\cong \T\PP^{N-I}$ and by $\T\PP^\circ_I\cong \R^{N-I}$ the closed and open coordinate strata of $\T\PP^N$, respectively. That is $\T\PP_I$ is defined by setting $x_i=-\infty$ for $i\in I$, and for $\T\PP^\circ_I \subset \T\PP_I$ we additionally require $x_i\ne-\infty$ for $i\not\in I$.

The directions parallel to the $j$-th coordinate in $\R^N\subset\T^N$ towards its $-\infty$-value are 
called {\em divisorial directions}. The primitive integral vector along a divisorial direction
(pointing towards $-\infty$ as the direction itself) is called a {\em divisorial vector}.
The positive linear combinations of divisorial vectors in $I\subset \{1, \dots, N\}$ span the $I$-th {\em divisorial cone} in $\R^N$. 

The notions of divisorial directions, vectors and cones are well defined for all open strata in $\T\PP^N$. Indeed, if $\T\PP^\circ_I \subset \T\PP^N$ is such a stratum and $j\not\in I$, we can take any chart $U_I^{(i)}\cong \T^{N-I}$ of $\T\PP_I$ such that $i\ne j$ and define the $j$-th divisorial direction as above. Clearly, the $j$-th divisorial directions agree in any two such charts. The same can be said about the $J$-th divisorial cones for any subset $J$ (of size not greater than $N-I-1$) disjoint  from $I$.

Recall that an $n$-dimensional polyhedral complex
 $Y^\circ\subset\R^N$
with rational slopes 
 is a finite union of
$n$-dimensional convex polyhedral domains
called {\em facets}.
Each facet is the intersection of a finite number of half-spaces
of  the form $mx\le a$, 
where $x\in \R^N$, $a\in\R$, $m\in \Z^N$. 
The intersection of any number of facets 
is required to be their common face.


\begin{lemma}\label{lemma:closure}
The closure in $\T\PP^N$ of an $n$-dimensional polyhedral complex $Y^\circ\subset\R^N$ with rational slopes intersects each stratum $\tp^\circ_I$ in a subset which supports a polyhedral complex of dimension $\le (n-1)$ with rational slopes. 
\end{lemma}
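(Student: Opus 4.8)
The plan is to work chart by chart and reduce everything to a statement about limits of rational polyhedra at the boundary divisors. Fix a stratum $\tp^\circ_I$ with $I\subset\{0,\dots,N\}$; after passing to one of the affine charts $U^{(i)}$ with $i\notin I$ we may assume $i=0$, so that $Y^\circ$ lies in $\R^N$ with coordinates $y^{(0)}_1,\dots,y^{(0)}_N$, and $\tp^\circ_I$ is cut out by sending $y^{(0)}_j\to-\infty$ for $j\in I$ (relabel so that $I=\{1,\dots,k\}$). First I would reduce to the case of a single facet $P\subset\R^N$, since the closure of a finite union is the union of the closures and a finite union of polyhedral complexes of dimension $\le n-1$ is again one; likewise a facet is a finite intersection of half-spaces $\{mx\le a\}$, and its closure in $\tp^N$ is contained in the intersection of the closures of those half-spaces, so it suffices to understand $\overline{P}\cap\tp^\circ_I$ for $P$ an $n$-dimensional rational polyhedron.

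Next I would describe $\overline{P}\cap\tp^\circ_I$ explicitly. A point of $\tp^\circ_I$ is a limit of a sequence $y(t)\in P$ with $y_j(t)\to-\infty$ for $j\le k$ and $y_j(t)\to y_j^\infty\in\R$ for $j>k$. The key observation is that the set of such limits is governed by the \emph{recession cone} (asymptotic cone) $\Cone(P)$ of $P$: the admissible ``escape directions'' into the stratum $\tp^\circ_I$ are exactly the rays of $\Cone(P)$ lying in the $I$-th divisorial cone, i.e. with nonpositive $j$-th coordinate for $j\le k$ and zero $j$-th coordinate for $j>k$, together with the translational lineality of $P$ in the coordinates $j>k$. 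Concretely, writing $\Cone_I(P)$ for the face of $\Cone(P)$ obtained by intersecting with $\{y_j\le 0,\ j\le k\}$ and then projecting to the coordinates $j>k$ by forgetting $y_1,\dots,y_k$, I claim $\overline{P}\cap\tp^\circ_I$ is the image under $\pi^\T_I$ of (a translate of) this projected cone intersected with the projection of $P$ itself — in any case a rational polyhedron in $\R^{N-I}$. Since $\Cone(P)$ is a rational polyhedral cone, every construction here (taking a face by imposing rational inequalities, projecting along coordinate subspaces) stays within rational polyhedra, and the resulting pieces glue into a rational polyhedral complex $\overline{Y^\circ}\cap\tp^\circ_I$.

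It remains to bound the dimension by $n-1$. The point is that reaching $\tp^\circ_I$ at all forces at least one genuinely unbounded divisorial direction to be used (namely some $y_j$, $j\in I$, must tend to $-\infty$), so the admissible cone $\Cone_I(P)$ must contain a nonzero vector $v$ with $v_j<0$ for some $j\le k$; the limit set lives in the coordinates $j>k$ and is the projection of a polyhedron of dimension $\le n$ along a direction it actually contains, hence has dimension $\le n-1$. More carefully: the fiber of $\pi^\T_I$ over any point of $\overline{P}\cap\tp^\circ_I$ meets $P$ in a set that is at least one-dimensional (it is the nonempty locus $y_j\to-\infty$, $j\le k$, which is the $P$-part of a cone containing such a $v$), so $\dim(\overline P\cap\tp^\circ_I)\le\dim P-1=n-1$. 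Taking the union over facets, faces, and strata in the boundary of $\tp^\circ_I$ preserves this bound, which proves the lemma.

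The main obstacle I anticipate is making the description of $\overline{P}\cap\tp^\circ_I$ in the second paragraph rigorous and chart-independent: one must check that the ``limit at $-\infty$'' genuinely is the projected recession cone (both inclusions), handle the interaction between the unbounded directions $y_j\to-\infty$ and the finite limits $y_j\to y_j^\infty$, and verify that the gluing of these pieces across the different charts $U^{(i)}$ is consistent — using precisely the fact, established earlier in the text, that divisorial directions and cones are well defined on every open stratum of $\tp^N$ independently of the chart. The dimension count itself is then essentially immediate once the structure of the limit set is pinned down.
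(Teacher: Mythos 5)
Your proposal is correct and follows essentially the same route as the paper's proof: reduce to a single rational polyhedral domain, note that its intersection with $\tp^\circ_I$ is governed by the asymptotic (recession) cone meeting the $I$-th divisorial cone, identify the limit set with the image of the domain under the projection along the divisorial directions in $I$ (hence rational polyhedral), and obtain the dimension drop because that projection kills a direction lying in the span of the domain. The only simplification worth noting is that the limit set is either empty or the \emph{entire} projection $\pi^\T_I(P)$, according to whether the recession cone contains a ray in the $I$-th divisorial cone, which is cleaner than the formula in your second paragraph and is exactly how the paper states it.
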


\begin{proof}
It is enough to show that the closure in $\T^N$ of a polyhedral domain $D\subset \R^N$ intersects a stratum $\T^\circ_I$ in a polyhedral domain $D_I$ of smaller dimension. Let $\bar D_I$ be the image of $D$ under the projection along the divisorial directions in $I$. Clearly, $\bar D_I$ is a polyhedral domain in $\T^\circ_I$.

Consider the intersection of the $I$-th divisorial cone in $\R^N$ with the asymptotic cone of $D$
(that is, the cone formed by the vectors $v \in \R^N$ with the property: $x\in D$ implies that $x + av \in D$
for any $a \geq 0$).
Then, observe that $D_I= \bar D_I$ if this intersection contains a ray (in that case the dimension of $D_I$ is smaller than $n$),
and $D_I$ is empty otherwise.
\end{proof} 

An $n$-dimensional polyhedral complex
 $Y^\circ\subset\R^N$
with rational slopes is called {\em weighted} if the facets
are equipped with non-negative integers.
Recall the {\em balancing condition}:
for every face $\Delta$ of codimension 1 the weighted sum 
of primitive (relatively to $\Delta$)
integer outward tangent vectors
in the facets incident to $\Delta$
should be parallel to $\Delta$.

\begin{definition}\label{defn:polycomplex}
{\em A {\rm (}weighted{\rm )} polyhedral complex $Y\subset\T\PP^N$} of dimension at most $n$
is a finite union of {\rm (}weighted{\rm )}  polyhedral complexes $Y_I$ of dimensions $\le n$ in 
$\tp^\circ_I$, where $I$ runs over the subsets of $\{0, \dots, N\}$, such that for any pair $I \subset J$ the intersection of the closure of any face of $Y_I$ with $\tp^\circ_J$ is a face of $Y_J$. 
The complex $Y$ is {\rm(}pure{\rm)} $n$-dimensional if any point in $Y$ lies in the closure of some $n$-dimensional face. It is {\em balanced} if all $n$-dimensional complexes in the union satisfy the balancing condition. 
\end{definition} 

By a {\em face} of a polyhedral complex $Y \subset \T\PP^N$ 
we mean the closure in $Y$ of a face of a complex from the union. Any polyhedral complex $Y$ can be 
considered weighted by setting all weights equal to one.
By default, we mean this situation unless other weights
are explicitly prescribed. 

\subsection{Smooth projective tropical varieties}\label{sec:varieties}
Now we define a more restrictive class of polyhedral complexes in $\T\PP^N$.
A {\em convex regular $\Q$-polyhedral domain} $D$ in $\T^{N}$ is the intersection
of a finite collection of half-spaces $H_k$ of the form
\begin{equation}\label{H_k}
H_k=\{x\in\T^N\ |\ mx\le a\}\subset\T^{N}
\end{equation}
for some $m\in\Z^N$ and $a\in\Q$. Here, we assume that if some component $m_i$ of $m$ is negative,
the corresponding component $x_i$ of $x$
can not take the value $-\infty$.
So that $H_k$ only contains points $x\in \T^n$ for which the scalar product $m x$ is well-defined.
The following statement is immediate.

\begin{lemma}\label{lemma:precylinders}
Let $D \subset \T^N$ be a non-empty convex regular $\Q$-polyhedral domain defined by the inequalities $m^{(r)}x \le a^{(r)}$. Let $I$ be a subset of $\{1,\dots, N\}$.  Then $D_I:= D\cap \T_I$ is  non-empty if and only if $m^{(r)}_i\ge 0$ for all $r$ and all $i\in I$. \qed
\end{lemma}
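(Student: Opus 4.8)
The statement to prove is Lemma~\ref{lemma:precylinders}, which characterizes when the intersection of a convex regular $\Q$-polyhedral domain $D \subset \T^N$ with a coordinate stratum $\T_I$ is non-empty.

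The plan is to unwind the definitions. Recall that $\T_I \cong \T^{N-I}$ is the closure of $\T^\circ_I = \{y \in \T^N : y_i = -\infty \text{ for } i \in I, \ y_i > -\infty \text{ for } i \notin I\}$, so points of $\T_I$ are exactly those $x \in \T^N$ with $x_i = -\infty$ for all $i \in I$. The key issue is that, by the convention attached to \eqref{H_k}, the half-space $H_k = \{x : m^{(r)} x \le a^{(r)}\}$ is only defined at points $x$ where the scalar product $m^{(r)} x$ makes sense, i.e.\ if $m^{(r)}_i < 0$ for some $i$ then $x_i \ne -\infty$ is required. Thus if some $m^{(r)}_i < 0$ with $i \in I$, then no point of $\T_I$ lies in $H_k$, hence $D_I = D \cap \T_I$ is empty. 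This gives the ``only if'' direction (in contrapositive form): non-emptiness of $D_I$ forces $m^{(r)}_i \ge 0$ for all $r$ and all $i \in I$.

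For the ``if'' direction, suppose $m^{(r)}_i \ge 0$ for all $r$ and all $i \in I$; I must produce a point of $D \cap \T_I$. Since $D$ is non-empty, pick any $x^0 \in D$. Now consider the point $x(t)$ obtained from $x^0$ by sending the coordinates indexed by $I$ towards $-\infty$: concretely, let $x(t)_i = x^0_i - t$ for $i \in I$ (or simply $x(t)_i = -\infty$) and $x(t)_i = x^0_i$ for $i \notin I$. For each half-space $H_k$, the value $m^{(r)} x(t) = m^{(r)} x^0 - t \sum_{i \in I} m^{(r)}_i \le m^{(r)} x^0 \le a^{(r)}$ since the coefficients $m^{(r)}_i$, $i \in I$, are all non-negative; moreover the scalar product remains well-defined for all $t \ge 0$ because the only coordinates we push to $-\infty$ have non-negative coefficients. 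Hence $x(t) \in D$ for all $t \ge 0$, and the limit as $t \to \infty$ (equivalently the point with $x_i = -\infty$, $i \in I$, and $x_i = x^0_i$ otherwise) lies in $D \cap \T_I$, since $D$ is closed in $\T^N$ and the relevant faces are included by the definition of $\T_I$ as the closure of $\T^\circ_I$. Therefore $D_I \ne \emptyset$.

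There is essentially no obstacle here; this is a bookkeeping argument, and the paper itself labels it ``immediate.'' The only point requiring a little care is the interplay between the well-definedness convention on $H_k$ and the passage to the limit at infinity: one must check that pushing the $I$-coordinates to $-\infty$ never makes a scalar product $m^{(r)} x$ ill-defined, which is exactly guaranteed by the hypothesis $m^{(r)}_i \ge 0$ for $i \in I$, and that $D$ being a \emph{convex regular} $\Q$-polyhedral domain is closed in $\T^N$ so it actually contains the limiting point rather than merely accumulating at it. One should also note that $D_I$ is then again a convex regular $\Q$-polyhedral domain in $\T_I \cong \T^{N-I}$, cut out by the restrictions of those inequalities $m^{(r)} x \le a^{(r)}$ with the $I$-coordinates set to $-\infty$, which is consistent with Lemma~\ref{lemma:closure}.
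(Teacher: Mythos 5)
The paper offers no proof of this lemma (it is recorded as ``immediate'' with the \qed{} in the statement), and your argument is exactly the intended unwinding of the definitions: the well-definedness convention on \eqref{H_k} gives the ``only if'' direction, and pushing the $I$-coordinates of a point of $D$ to $-\infty$ gives the ``if'' direction. One small correction: the justification ``since $D$ is closed in $\T^N$'' is not literally valid. A half-space $H_k$ with some $m_i<0$ is \emph{not} closed in $\T^N$ (e.g.\ for $m=(1,-1)$, $a=0$ the points $(-t,-t)\in H_k$ converge to $(-\infty,-\infty)\notin H_k$), so $D$ need not be closed either. This is harmless here: the limiting point $x(\infty)$, with $x_i=-\infty$ for $i\in I$ and $x_j=x^0_j$ otherwise, can be checked to lie in every $H_k$ directly — each coordinate equal to $-\infty$ has non-negative coefficient (for $i\in I$ by hypothesis, and for any $j\notin I$ with $x^0_j=-\infty$ because $x^0\in H_k$ already forces $m^{(r)}_j\ge 0$), so $m^{(r)}x(\infty)$ is well-defined and $\le m^{(r)}x^0\le a^{(r)}$ — which is the same monotonicity computation you performed at finite $t$, so no closedness statement is needed.
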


The boundary $\dd H_k$ of a half-space $H_k$ is given by the equation $mx=a$.
A {\em mobile face} $E$ of $D$ is the intersection
of $D$ with the boundaries of some of its defining half-spaces given by \eqref{H_k}.
The adjective {\em mobile} stands here to distinguish such faces from
more general faces which we define below and
which are allowed to have support in $\T^N\setminus\R^N$, {\it i.e.}, to be disjoint from
$\R^N\subset \T^{N}$. (Such faces disjoint from
$\R^N\subset \T^{N}$ have reduced mobility and are called {\em sedentary}.)

The {\em dimension} of a convex regular  $\Q$-polyhedral domain $D$ is its dimension as a topological manifold (possibly with boundary).
Observe that for each non-empty mobile face $E$ of $D$ the intersection $E^\circ=E\cap\R^N$ is non-empty. 
Each mobile face of $D$ is a convex regular $\Q$-polyhedral domain itself.

\begin{definition}\label{def:poly_complex}
An {\em $n$-dimensional regular $\Q$-polyhedral complex}
$Y=\bigcup D\subset \T^{N}$
is the union of a finite collection of $n$-dimensional convex regular $\Q$-polyhedral domains $D$, called
the {\em facets} of $Y$, subject to the following property:
for any collection $\{D_j\}$ of facets, their intersection $\bigcap D_j$ is a mobile face of each facet $D_j$.
Such intersections are called the mobile faces of $Y$.
\end{definition}

For mobile faces $E$ of $Y$ and subsets $I\subset\{1,\dots,N\}$, it is convenient  to treat
the intersections $E \cap\T_I$ also as faces (at infinity) of $Y$.

\begin{definition}\label{def:sedentarity}
Let  $E$ be a mobile face of $Y$, and let $I$ be a subset of $\{1,\dots,N\}$.
We say that
$$E_I=E \cap\T_I$$ is a {\em face} of $Y$.
The {\em sedentarity} of the face $E_I$ is $s=|I|$,
while its {\em refined sedentarity} is $I$.
We call the face $E$ a {\em parent} of $E_I$.
The poset $\Pi(E)$ of faces $E_I$, where $I$ runs over all subsets $I\subset \{1,\dots,N\}$, is called the {\em family} of $E$.
\end{definition}

Clearly, the {mobile} faces are the faces of sedentarity $0$. 
A mobile face $E$ of $Y$ such that $E_I$ is non-empty for some subset $I \subset \{1, \ldots, N\}$ contains a ray along the $j$-th divisorial direction for each $j\in I$ (cf. Lemma \ref{lemma:precylinders}). 
In this case we say that this is a {\em divisorial direction of the face} $E$, see Figure \ref{mobile}.
By convexity this also imply that $E$ contains the entire $I$-th divisorial cone. 
(Note that 
a more general polyhedral complex considered in Section \ref{section:polyhedral} 
may have a face whose closure 
intersects a stratum $\T_I$, but the face does not contain the $I$-th divisorial cone.) 
Sedentary faces of $Y$ are mobile when considered
in the respective strata of $\T^N$, and as such also have divisorial   
directions defined. 

The following lemma describes the geometry of a regular $\Q$-polyhedral complex $Y$ near its sedentary faces.

\begin{lemma}\label{lemma:cylinders}
Let $Y$ be a regular $\Q$-polyhedral complex in $\T^N$. Let $I$ be a subset of $\{1,\dots, N\}$  such that $Y_I:= Y \cap \T_I$ is non-empty.
Then, $Y_I$ is a regular $\Q$-polyhedral complex in $\T_I$. Moreover, its regular neighborhood
$$Y_I^\epsilon:= \{y\in Y \suchthat y_i< \log \epsilon, i\in I\},
$$
for sufficiently small $\epsilon > 0$, splits as the product
$$Y_I^\epsilon = Y_I \times \T^{I}_\epsilon,
$$
where  $\T^{I}_\epsilon := \{x_i< \log \epsilon, i\in I\} \subset \T^I$.

\end{lemma}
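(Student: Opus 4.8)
The plan is to reduce everything to the local situation near a single facet and then assemble. First I would work one facet at a time: let $D\subset\T^N$ be a facet of $Y$ defined by inequalities $m^{(r)}x\le a^{(r)}$, and fix $I\subset\{1,\dots,N\}$ with $D_I=D\cap\T_I\neq\emptyset$. By Lemma \ref{lemma:precylinders}, $D_I\neq\emptyset$ forces $m^{(r)}_i\ge 0$ for all $r$ and all $i\in I$. The point of this sign condition is that each defining half-space $H_r=\{m^{(r)}x\le a^{(r)}\}$, once restricted to the slab $\{x_i<\log\epsilon,\ i\in I\}$, becomes \emph{independent} of the coordinates $x_i$ with $i\in I$ for $\epsilon$ sufficiently small: since $m^{(r)}_i\ge 0$, driving $x_i$ toward $-\infty$ only makes $m^{(r)}x$ smaller, so the inequality is automatically satisfied once $x_i$ is below a threshold determined by $a^{(r)}$ and the (bounded) range of the other coordinates on $D$. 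Hence for small enough $\epsilon$ the domain $D_I^\epsilon=\{x\in D\suchthat x_i<\log\epsilon,\ i\in I\}$ is cut out by the inequalities $m^{(r)}y\le a^{(r)}$ in the $y$-variables indexed by the complement of $I$, together with the free inequalities $x_i<\log\epsilon$ for $i\in I$; that is, $D_I^\epsilon=D_I\times\T^I_\epsilon$, where $D_I$ is itself a convex regular $\Q$-polyhedral domain in $\T_I$. In particular $D_I$ is a facet, establishing the first assertion that $Y_I$ is a regular $\Q$-polyhedral complex in $\T_I$.

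Next I would check compatibility of these product decompositions across faces, so that the facet-by-facet splittings glue to a global product structure on $Y_I^\epsilon$. Given two facets $D,D'$ of $Y$ meeting along a mobile face $E=D\cap D'$, the threshold $\epsilon$ can be chosen uniformly (finitely many facets, finitely many defining inequalities), and the projection $D_I^\epsilon\to\T^I_\epsilon$ forgetting the non-$I$ coordinates agrees on the overlap with the analogous projection for $D'$, because both are just the restriction of the single linear projection $\pi^\T_I\colon\T^N\to\T_I$ composed with the coordinate readout on $\T^I$. Thus the maps $D_I^\epsilon\to D_I$ and $D_I^\epsilon\to\T^I_\epsilon$ patch over the whole union to give well-defined continuous maps $Y_I^\epsilon\to Y_I$ and $Y_I^\epsilon\to\T^I_\epsilon$, and the resulting map $Y_I^\epsilon\to Y_I\times\T^I_\epsilon$ is a bijection which is piecewise $\Z$-affine in both directions, hence an isomorphism of regular $\Q$-polyhedral complexes. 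The face poset identification in Definition \ref{def:poly_complex} / Definition \ref{def:sedentarity} is exactly what makes this gluing consistent: a mobile face $E$ of $Y$ with $E_I\neq\emptyset$ contains the full $I$-th divisorial cone, so $E_I^\epsilon=E_I\times\T^I_\epsilon$ and these subproducts sit inside the product $Y_I\times\T^I_\epsilon$ coherently.

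The main obstacle I anticipate is making the "$\epsilon$ small enough" step uniform and rigorous, i.e. pinning down that a \emph{single} $\epsilon>0$ works simultaneously for the product splitting of every facet incident to $Y_I$ and for the statement that no extra (non-$I$) faces of $Y$ intrude into the slab $Y_I^\epsilon$. This is where one has to be careful that a face whose closure meets $\T_I$ but which does \emph{not} contain the $I$-th divisorial cone — the pathology flagged parenthetically after Definition \ref{def:sedentarity}, possible for the more general complexes of Section \ref{section:polyhedral} but \emph{excluded} here by regularity via Lemma \ref{lemma:precylinders} — cannot occur; so one must genuinely invoke the regular $\Q$-polyhedral structure and the sign condition $m^{(r)}_i\ge 0$, not merely the topological closure. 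Once that uniformity is secured, the rest is the bookkeeping of linear algebra sketched above: the decomposition $Y_I^\epsilon=Y_I\times\T^I_\epsilon$ with $\T^I_\epsilon=\{x_i<\log\epsilon,\ i\in I\}\subset\T^I$ follows, completing the proof.
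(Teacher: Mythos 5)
Your proposal follows essentially the same route as the paper's proof: invoke Lemma \ref{lemma:precylinders} to get the sign condition $m^{(r)}_i\ge 0$, $i\in I$, on every face meeting $\T_I$, deduce that near $\T_I$ only the inequalities not involving the $I$-coordinates remain active (so that $Y_I$ is again a regular $\Q$-polyhedral complex and each face splits off a $\T^I_\epsilon$-factor), and then glue the face-by-face splittings; the paper's version of your compatibility step is simply the identity $(E\cap F)_I=E_I\cap F_I$ for mobile faces, which plays the same role as your agreement of the projections $\pi^\T_I$ on overlaps. The one point where your write-up overreaches is exactly the point you flag as the main obstacle: you justify the uniform threshold by ``the (bounded) range of the other coordinates on $D$'', but faces of a regular $\Q$-polyhedral complex are in general unbounded transversally to $I$, so that boundedness is simply not available, and finiteness of the number of facets and inequalities does not by itself give a single $\epsilon$. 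The paper handles this spot differently (and more carefully in one respect): it separates the inequalities of a face $E$ into those with $m_i=0$ for all $i\in I$, which define $E_I$ and involve no $I$-coordinates at all (so no smallness is needed for them), and the remaining ones with $m_i>0$ for some $i\in I$, which it asserts become vacuous once the $y_i$, $i\in I$, are small; note that this assertion still tacitly requires the functionals $\sum_{j\notin I}m^{(r)}_jy_j$ to be bounded above on the relevant face, a uniformity that neither your argument nor the paper's two-line proof actually establishes from the stated hypotheses, and which is the genuinely delicate content here (in the projective setting where the lemma is used, the structure of the faces supplies it). So: same approach and same conclusion as the paper, but you should drop the parenthetical boundedness claim and either adopt the paper's $m_i=0$ versus $m_i>0$ dichotomy or supply an actual argument for the uniform bound on the non-$I$ part of each active functional.
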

\begin{proof}
Let $E$ be a mobile face of $Y$ such that $E_I:=E\cap \T_I$ is non-empty. 
By Lemma \ref{lemma:precylinders}
the defining inequalities  \eqref{H_k} for $E$ must have $m_i \ge 0$ for all $i\in I$. Furthermore, 
to define the face $E_I \subset \T_I$ one can take those inequalities for $E$ that have $m_i=0$ for all $i\in I$.
Each of the remaining inequalities for $E$ have $m_i>0$ for at least one $i\in I$,
and hence they are satisfied for sufficiently small $y_i, i\in I$.

For a pair of mobile faces $E,F$ of $Y$ we have $(E\cap F)_I = E_I \cap F_I$ (both are defined by 
plugging $y_i = -\infty$, $i \in I$, into the union of the defining inequalities for $E$ and $F$). Thus, the statement for a regular neighborhood of $Y_I$ follows from the statement for each individual mobile face of $Y$.
\end{proof}

\begin{figure}
\centering
\includegraphics[height=45mm]{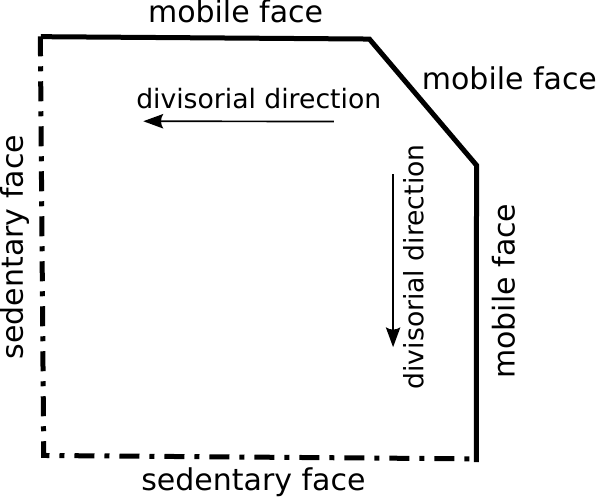}
\caption{Mobile and sedentary faces of a regular $\Q$-polyhedral domain in $\T^N$.}
\label{mobile}
\end{figure}

Let $E\subset Y$ be a mobile face of $Y$, and let $x$ be a point in the relative interior of $E$.
Consider the tangent cone $T_x Y$ to $Y$ at $x$, 
and denote by $\Sigma_E$ the quotient of $T_x Y$ 
by the linear span $L_E$ of $E\cap\R^N$. 
This quotient is a {\it $\Q$-polyhedral fan} in the vector space $\R^N/L_E$,
{\it i.e.}, a polyhedral complex with rational slopes which has a cone structure in $\R^N/L_E$. 
The fan $\Sigma_E$ is called {\em relative fan} of $Y$ at $E$. 

Finally, we recall the tropical notion of smoothness.
A \emph{matroid} $M=(M,r)$ is a finite set $M$ together with a rank function $r:2^M\to\Z_{\ge0}$ such that
we have the inequalities
$r(A\cup B)+r(A\cap B) \le r(A) + r(B)$ and $r(A)\le |A|$,
for any subsets $A,B\subset M$,
as well as the inequality $r(A)\le r(B)$ whenever $A\subset B$.
Subsets $A \subset M$ such that $r(A) = |A|$ (respectively, $r(A) < |A|$) are called {\it independent}
(respectively, {\it dependent}). 
Subsets $F\subset M$ such that $r(A)>r(F)$ for any 
$A \supsetneq F$ 
are called {\em flats}
of $M$ of rank $r(F)$.
Matroid $M$ is  {\em loopless} if $r(A)=0$ implies $A=\emptyset$.

The Bergman fan of a loopless matroid $M$ is a $\Q$-polyhedral fan $\Sigma_M\subset \R^{|M|-1}$ constructed as follows
(see (\cite{AK}). Choose $|M|$ integer vectors $e_j\subset\Z^{|M|-1}\subset\R^{|M|-1}$, $j\in M$,
such that $\sum\limits_{j\in M} e_j=0$ and any $|M|-1$ of these vectors form a basis of $\Z^{|M|-1}$.
To any flat $F\subset M$ we associate a vector
$$e_F:=\sum\limits_{j\in F} e_j\in\R^{|M|-1}.
$$
E.g, $e_M=e_\emptyset=0$, but $e_F\neq 0$ for any other (proper) flat $F$.
To any flag of flats $F_{i_1}\subset\dots\subset F_{i_k}$
we associate the
convex cone generated by $e_{F_{i_j}}$. We define $\Sigma_M$ to be the union of such cones, which is, clearly, an $(r(M)-1)$-dimensional rational simplicial fan.
The fan $\Sigma_M$ is 
called the {\em Bergman fan} of $M$.

\begin{definition}
A regular $\Q$-polyhedral complex in $\T^N$ is {\it smooth} 
at a mobile face $E \subset Y$ if the relative fan $\Sigma_E$
has the same support as the Bergman fan $\Sigma_M$ for some loopless matroid $M$
(recall that the support of a fan is the union of its cones).
A  regular $\Q$-polyhedral complex $Y\subset \T^N$ is called {\em smooth},
if it is smooth at all
its mobile faces.
\end{definition} 

Note that since all Bergman fans are balanced (cf. \cite{AK}), a smooth regular $\Q$-polyhedral complex $Y$ is automatically balanced. That is, every mobile face of $Y$ of codimension $1$ has a balanced relative fan:
the sum of the outward primitive integer vectors along its rays is zero. 

\begin{definition}\label{def:qtrop}
A closed subset $X\subset \T\PP^N$ is
a {\em smooth regular projective $\Q$-tropical variety}
if on every affine chart $U^{(i)}\cong \T^N \subset \T\PP^N$ 
it restricts to a smooth regular
 $\Q$-polyhedral complex $X^{(i)}$ in $\T^N$.
\end{definition}

By definition, the faces of $X$ are the closures of the faces of $X^{(i)}$ in $X$.
A face $\Delta$ of $X$ is determined
by its relative interior which coincides
with the relative interiors
for all non-empty restrictions
to the complexes $X^{(i)}$.

For convenience we make the following additional assumption:
each face $\Delta$ of $X$ lies entirely in at least one chart $X^{(i)}$.
For instance, we avoid considering $X=\T\PP^N$ as just one face, it needs some subdivision.
We can always subdivide
$X$ in order to achieve this requirement. 

Sedentarity in $\tp^N$ is inherited from the
projection map $\T^{N+1}\setminus\{0\}\to\tp^N$.
Recall that for any face $\Delta$ of a smooth regular
projective $\Q$-tropical variety $X \subset \T\PP^N$
the divisorial directions are intrinsically defined.
The splitting of the boundary neighborhoods $X_I^\epsilon = X_I \times \T^{I}_\epsilon$,
however, is not canonical,
it depends on the chart.

We introduce some convenient notations and terminology.
We write $\Delta' \prec^s_j \Delta$ (and $\Delta \succ^s_j \Delta'$) when $\Delta'$ is a face of $\Delta$ of codimension $j$ and cosedentarity $s$ (that is,
the sedentarities of $\Delta'$ and $\Delta$ differ by $s$).
We omit the superscript $s$ in case $s=0$ and simply write $\Delta' \prec_j \Delta$.
A face $\Delta$ of $X$ is called {\em infinite} if it has a subface of higher sedentarity.
Otherwise, $\Delta$ is called {\em finite} (the sedentarity of $\Delta$ may be positive).
The {\em star} of a face $\Delta$ of $X$ is the poset
formed by the faces of $X$ that contain $\Delta$ and have the same sedentarity as $\Delta$. 

Lemmas  \ref{lemma:precylinders} and \ref{lemma:cylinders} imply the following statement. 

\begin{proposition}\label{prop:poset}
Let $X$ be a smooth regular projective $\Q$-tropical variety, and let $\Delta_0$ be a mobile face of $X$.
\begin{enumerate}
\item Any face of $X$ belongs to a single family. 

\item The face $\Delta_0$ contains a unique subface
of maximal sedentarity; denote this surface by $\Delta_J$, where $J$ is its refined sedentarity. The face $\Delta_J$ is finite.
 
\item The family $\Pi(\Delta_0)$ is the rank $|J|$ lattice {\rm (}under $\prec_j^j${\rm )} isomorphic 
to the lattice of all subsets of $J$. 

\item The asymptotic cone of $\Delta_0$ coincides with 
the divisorial cone of $\Delta_0$. The divisorial directions of $\Delta_0$ are indexed by the elements of $J$.
\item All faces 
in the family $\Pi(\Delta_0)$ have isomorphic stars. A relation $\Delta'\prec_j\Delta''$ for any two faces of $X$ 
also holds for their parents: $\Delta'_0\prec_j\Delta''_0$. 
In addition, a relation $\Delta'_0\prec_j\Delta''_0$ among mobile faces 
gives rise to an injection of $\Pi(\Delta'_0)$ into $\Pi(\Delta''_0)$. 
\qed 
\end{enumerate} 
\end{proposition}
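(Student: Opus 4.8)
The plan is to deduce each item of Proposition~\ref{prop:poset} directly from Lemmas~\ref{lemma:precylinders} and \ref{lemma:cylinders}, working one chart at a time. Fix a chart $U^{(i)}\cong\T^N$ containing the mobile face $\Delta_0$, and let $D$ be a facet of $X^{(i)}$ with $\Delta_0$ a mobile face of $D$; write the defining inequalities of $D$ as $m^{(r)}x\le a^{(r)}$. By Lemma~\ref{lemma:precylinders}, the subfaces $\Delta_0\cap\T_I$ that are non-empty are exactly those for which $m^{(r)}_i\ge 0$ for every $r$ and every $i\in I$; since this condition is inherited when $I$ shrinks and the union of all such $I$ is itself admissible, there is a unique maximal such subset $J$ — this is the $J$ of item (2), and $\Delta_J:=\Delta_0\cap\T_J$ is its subface of maximal sedentarity. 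That $\Delta_J$ is finite is immediate: by maximality of $J$ no further coordinate can be sent to $-\infty$, so $\Delta_J$ has no subface of higher sedentarity.

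For item (1), I would observe that a face $E_I$ of $X$ of refined sedentarity $I$ determines, via the splitting $X_I^\epsilon=X_I\times\T^I_\epsilon$ of Lemma~\ref{lemma:cylinders}, a unique mobile face of $X^{(i)}$ as its parent — concretely, the mobile face defined by the subset of inequalities with $m^{(r)}_i=0$ for $i\in I$, pulled back to $\R^N$; hence $E_I$ belongs to a single family $\Pi(\Delta_0)$. For item (3), every subset $I\subseteq J$ gives a non-empty face $\Delta_I=\Delta_0\cap\T_I$, and $I\subseteq I'$ corresponds exactly to $\Delta_{I'}\prec_{|I'|-|I|}^{|I'|-|I|}\Delta_I$ (passing to a larger $I$ raises sedentarity by the difference and drops dimension by the same amount, since by Lemma~\ref{lemma:cylinders} the local model of $\Delta_I$ near $\Delta_{I'}$ is $\Delta_{I'}\times\T^{I'\setminus I}_\epsilon$); this is visibly an isomorphism of posets onto the Boolean lattice $2^J$. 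For item (4), the divisorial-cone claim is the content of the remark preceding the proposition (a mobile face whose closure meets $\T_I$ contains the full $I$-th divisorial cone), and it shows the asymptotic cone of $\Delta_0$ contains the $J$-th divisorial cone; conversely any ray in the asymptotic cone that is not divisorial would have to send some coordinate to $-\infty$ in a direction not allowed by maximality of $J$ — more carefully, a non-divisorial ray in the asymptotic cone would, by Lemma~\ref{lemma:precylinders} applied to the relevant coordinate subspace, enlarge the admissible index set beyond $J$, contradicting maximality — so the asymptotic cone equals the divisorial cone, indexed by $J$.

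Item (5) is where the real work sits. The splitting $X_I^\epsilon=X_I\times\T^I_\epsilon$ of Lemma~\ref{lemma:cylinders} shows that a neighborhood of any face $\Delta_I$ in $X$ is the product of a neighborhood of $\Delta_J$ in $X_J$ with a fixed corner piece $\T^{J}_\epsilon$ restricted to the coordinates in $J\setminus I$; since the faces of $X$ containing $\Delta_I$ with the same sedentarity correspond under this product to the faces of $X_J$ containing $\Delta_J$, the stars of all members of $\Pi(\Delta_0)$ are isomorphic to the star of $\Delta_J$. For the statement about parents: if $\Delta'\prec_j\Delta''$ with $\Delta',\Delta''$ of refined sedentarities $I'\supseteq I''$, then plugging $y_i=-\infty$ ($i\in I'$) into the inequalities is a monotone operation, so the parents (obtained by the $m^{(r)}_i=0$ selection above) satisfy $\Delta'_0\prec_j\Delta''_0$ — I must check the codimension is preserved, which follows because passing $\Delta''\to\Delta'$ and $\Delta''_0\to\Delta'_0$ both drop dimension by exactly the number of extra active inequalities, and Lemma~\ref{lemma:cylinders} guarantees these coincide. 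Finally, given $\Delta'_0\prec_j\Delta''_0$ among mobile faces, the maximal-sedentarity subsets $J'\subseteq J''$ (a face of $\Delta'_0$ being a face of $\Delta''_0$, any coordinate that can go to $-\infty$ on $\Delta'_0$ can on $\Delta''_0$), which yields the injection $2^{J'}\hookrightarrow 2^{J''}$, i.e.\ $\Pi(\Delta'_0)\hookrightarrow\Pi(\Delta''_0)$. The main obstacle I anticipate is bookkeeping the codimension and cosedentarity indices consistently across the chart change and the product splitting — the geometric content is entirely supplied by Lemmas~\ref{lemma:precylinders} and \ref{lemma:cylinders}, but one must be careful that the "selection of inequalities" description of the parent is chart-independent, which follows from the uniqueness of relative interiors of faces noted just after Definition~\ref{def:qtrop}.
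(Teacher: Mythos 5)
Your route is the same as the paper's: the paper prints no argument for Proposition \ref{prop:poset} beyond the sentence that Lemmas \ref{lemma:precylinders} and \ref{lemma:cylinders} imply it, and your chart-by-chart deduction of items (1), (2), (3), (5) from the coordinatewise admissibility criterion and the splitting $X_I^\epsilon=X_I\times\T^I_\epsilon$ is exactly that intended deduction. One small repair first: in item (2) you apply Lemma \ref{lemma:precylinders} to the defining inequalities of a \emph{facet} $D$ containing $\Delta_0$; with only those inequalities the criterion characterizes when $D\cap\T_I\neq\emptyset$, not when $\Delta_0\cap\T_I\neq\emptyset$ (take $D=\{x_1\le 0\}\subset\T^2$ and $\Delta_0=\{x_1=0\}$: the vector $m=(1,0)$ passes your test for $I=\{1\}$, yet $\Delta_0\cap\T_{\{1\}}=\emptyset$). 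You must apply the lemma to a defining system of $\Delta_0$ itself, which is legitimate because every mobile face is again a convex regular $\Q$-polyhedral domain; the criterion is still coordinatewise, so the unique maximal $J$ and the Boolean lattice in (3) survive unchanged.

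The genuine gap is in item (4), in the inclusion of the asymptotic cone into the divisorial cone. Your argument only treats asymptotic rays $v$ that head into a coordinate stratum of the chosen chart, i.e.\ $v$ with all components $\le 0$: for those, $\supp(v)$ becomes admissible and maximality of $J$ gives the contradiction. It says nothing about asymptotic directions with a positive component (for instance the direction $e_1+e_2$ of the third ray of a tropical line, seen in the chart $U^{(0)}$ of $\T\PP^2$). Such a ray sends no chart coordinate to $-\infty$ and has no limit point in $\T^N$ at all, so Lemma \ref{lemma:precylinders} applied inside the chart detects nothing and no enlargement of $J$ results; your ``more carefully'' sentence would therefore not rule it out. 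To exclude this case you must use projectivity together with the paper's standing convention: $\Delta_0$ is a face of $X$, hence a closed subset of the compact $X\subset\T\PP^N$, and it lies entirely in the chosen chart $U^{(i)}\cong\T^N$. A ray whose direction has a positive chart coordinate has no accumulation point in $\T^N$ (equivalently, its limit in $\T\PP^N$ lies in the stratum $x_i=-\infty$), contradicting compactness of $\Delta_0$ inside the chart. Once this is in place, every asymptotic direction is componentwise nonpositive and your maximality argument completes the proof of (4); the rest of your write-up stands.
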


\subsection{Local homology and Orlik-Solomon algebra}\label{sec:Orlik-Solomon} 
Let $\Sigma=\bigcup \sigma\subset \R^N=\Z^N\otimes \R$ be a $\Q$-polyhedral fan.
For each cone $\sigma\subset \Sigma$, we denote by $\<\sigma\>_\Z$
the integral lattice in the vector subspace linearly spanned by $\sigma$. 

\begin{definition}\label{def:local_fk}
The {\em homology} group $^\Z\F_k(\Sigma)$ is the subgroup of $ \wedge^k\Z^N$ generated by the elements $v_1\wedge\dots \wedge v_k$, where all $v_1,\dots,v_k \in \<\sigma\>_\Z$ for some cone $\sigma\in\Sigma$.  It is important that all $k$ vectors $v_i$ come from the {\em same} cone. The {\em cohomology} is the dual group $^\Z\F^k(\Sigma):=\Hom(^\Z\F_k(\Sigma),\Z)$, 
which is the quotient of $\wedge^k(\Z^N)^*$  by $(^\Z\F_k(\Sigma))^\perp$. 
\end{definition} 

It is not hard to see (cf. \cite{Zh}) that the cohomology groups form a graded algebra $^\Z\F^\bullet (\Sigma)$ over $\Z$ under the wedge product in $\wedge^k(\Z^N)^*$. 

We restrict our attention to the case where $\Sigma$ is the Bergman fan $\Sigma_M$ associated to a loopless matroid $M$. 
On the other hand, to any loopless matroid $M$ one can also associate its {\it Orlik-Solomon algebra} $\OS(M)$ 
as follows (see, e.g. \cite{OT}). 

Let $W$ be a rank $N+1$ free abelian group generated by elements $f_0,\dots,f_N$, 
where $| M | = N + 1$.
Then, $\OS^\bullet(M):=\wedge^\bullet W/{\mathcal I}^\bullet$, where the {\it Orlik-Solomon ideal} $\mathcal I$
is generated by the elements 
$$\partial(f_{i_0}\wedge f_{i_1}\wedge \dots \wedge f_{i_k}):=\sum_{s=0}^k (-1)^s f_{i_0}\wedge \dots \hat f_{i_s} \dots \wedge f_{i_k},
$$
for all dependent subsets $I=\{{i_0},{i_1}, \dots ,{i_k}\}$ of the matroid $M$.

More relevant for us is the {\it projective Orlik-Solomon algebra} $\OS_0^\bullet(M)$,
which is the following modification of $\OS(M)$. Let $W_0$ be the subgroup of $W$ generated by all differences $f_i-f_j$. Then, we set $\OS_0^\bullet(M):=\wedge^\bullet W_0/{\mathcal I}^\bullet_0$, where ${\mathcal I}_0={\mathcal I} \cap \wedge^\bullet W_0$ is the restriction of ${\mathcal I}$ to the subalgebra $\wedge^\bullet W_0 \subset \wedge^\bullet W$. 

\begin{theorem}[\cite{Zh}]
There is a canonical isomorphism $^\Z\F^\bullet (\Sigma_M) \cong \OS_0^\bullet (M)$ of graded $\Z$-algebras. 
\end{theorem}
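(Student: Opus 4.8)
The plan is to construct the isomorphism $^\Z\F^\bullet(\Sigma_M) \cong \OS_0^\bullet(M)$ explicitly on generators and then verify that the defining relations on both sides match. Recall that the Bergman fan $\Sigma_M$ lives in $\R^{|M|-1} = \Z^{|M|-1}\otimes\R$, with rays spanned by the vectors $e_F$ attached to proper nonempty flats $F$, and top-dimensional cones corresponding to complete flags of flats. Dually, $\OS_0^\bullet(M)$ is generated in degree one by the classes of the differences $f_i - f_j$. The first step is to fix the degree-one identification: the span of $\Sigma_M$ is all of $\R^{|M|-1}$, so $^\Z\F_1(\Sigma_M) = \Z^{|M|-1}$ and $^\Z\F^1(\Sigma_M) = (\Z^{|M|-1})^*$; I would identify this with $W_0 = \langle f_i - f_j\rangle$ by sending $f_i - f_j$ to the linear functional dual to $e_i - e_j$ under the pairing coming from $\sum_j e_j = 0$ and the fact that any $|M|-1$ of the $e_j$ form a basis. (A clean way: $W$ surjects onto $(\R^{|M|-1})^*$ by $f_j \mapsto e_j^\vee$ where $e_j^\vee$ is characterized by $\langle e_j^\vee, e_k\rangle = \delta_{jk} - \tfrac{1}{|M|}$ or, more invariantly, by noting $\wedge^\bullet W_0$ is the exterior algebra on the dual of the ambient space.)

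Next I would pass to higher degrees. On the $\F^\bullet$ side, $^\Z\F^k(\Sigma_M)$ is the quotient of $\wedge^k(\Z^{|M|-1})^*$ by the perpendicular $(^\Z\F_k(\Sigma_M))^\perp$, and $^\Z\F_k(\Sigma_M)$ is spanned by wedges $v_1\wedge\cdots\wedge v_k$ with all $v_i$ in a common cone, i.e. all $v_i$ in the span of some flag $F_{i_1}\subset\cdots\subset F_{i_k}$, which is exactly $\langle e_{F_{i_1}},\dots,e_{F_{i_k}}\rangle$. So $^\Z\F_k(\Sigma_M)$ is generated by the elements $e_{F_1}\wedge\cdots\wedge e_{F_k}$ over all chains of proper flats. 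The key combinatorial step is to identify the annihilator of this subgroup with the image of the Orlik--Solomon ideal $\mathcal I_0$ under the degree-one identification. I would argue this by translating both into statements about the matroid: a dependent set $I = \{i_0,\dots,i_k\}$ produces the boundary relation $\partial(f_I)$ in $\mathcal I$; under the dual pairing, $\partial(f_I)$ evaluated against $e_{F_1}\wedge\cdots$ detects precisely whether the vectors $e_{i_s}$ restricted to the flag are dependent, which they are whenever $I$ is dependent in $M$. The cleanest route is probably to invoke the known presentation of the cohomology of the Bergman fan / tropical linear space (this is the content of \cite{Zh}, and also appears in the work on wonderful compactifications), reducing the problem to a bookkeeping identification of generators; alternatively one checks directly that the pairing $\wedge^k W_0 \times {}^\Z\F_k(\Sigma_M) \to \Z$ descends to a perfect pairing $\OS_0^k(M)\times {}^\Z\F_k(\Sigma_M)\to\Z$.

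The main obstacle I anticipate is the precise matching of the two ideals: showing $(^\Z\F_k(\Sigma_M))^\perp$ equals the degree-$k$ part of the ideal generated by the $\partial(f_I)$ for dependent $I$, restricted to $\wedge^\bullet W_0$. One inclusion — that Orlik--Solomon boundary relations vanish on chains of flats — should follow from the observation that if $I$ is dependent and all the $e_{i_s}$ lie in the span of a single flag of flats, then they satisfy a linear relation with coefficients matching the circuit, forcing the wedge to be zero; but the bookkeeping with the identification $f_j \mapsto e_j^\vee$ and with signs in $\partial$ is delicate. The reverse inclusion — that these relations generate the full annihilator — is the genuinely nontrivial part and is where I would lean on the matroid-theoretic fact (due to Orlik--Solomon, and exploited by Feichtner--Yuzvinsky and Adiprasito--Huh--Katz in the Bergman fan setting) that the "no broken circuit" monomials $e_{F_1}\wedge\cdots\wedge e_{F_k}$ over chains of flats form a basis of $^\Z\F_k(\Sigma_M)$ and the corresponding nbc monomials in the $f_i$ form a basis of $\OS_0^k(M)$; with both bases in hand, the degree-one identification extends uniquely to a graded algebra isomorphism and there is nothing left to check except that it is multiplicative, which is automatic since both algebras are quotients of exterior algebras and the map is induced by a linear isomorphism in degree one. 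Finally I would remark that the isomorphism is canonical because the degree-one identification $W_0 \cong (\R^{|M|-1})^*$ is canonical once the $e_j$ are chosen, and a change of the $e_j$ is realized by a $\GL(\Z^{|M|-1})$ transformation under which both sides transform compatibly.
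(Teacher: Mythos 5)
First, a point of reference: the paper does not prove this statement at all --- it is quoted from \cite{Zh} and used as a black box (its role here is only to feed into Theorem \ref{OS}), so there is no internal argument to compare yours against; a proposed proof has to stand on its own or reduce honestly to the cited source. Measured that way, your outline follows the natural route (a pairing $\wedge^k W_0\times{}^\Z\F_k(\Sigma_M)\to\Z$ that should descend to a perfect pairing $\OS_0^k(M)\times{}^\Z\F_k(\Sigma_M)\to\Z$), and the easy half is fine: a circuit whose elements all lie in the span of a single flag produces a linear relation among the corresponding vectors, so the Orlik--Solomon boundary relations annihilate all flag wedges $e_{F_1}\wedge\dots\wedge e_{F_k}$.

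The genuine gap is in the other half. You discharge the statement that $({}^\Z\F_k(\Sigma_M))^\perp$ is no larger than the degree-$k$ part of ${\mathcal I}_0$ by asserting that the nbc-type flag monomials form a $\Z$-basis of ${}^\Z\F_k(\Sigma_M)$ and that the nbc monomials form a $\Z$-basis of $\OS_0^k(M)$; the first assertion is essentially equivalent to the theorem being proved, and at one point you even propose to ``invoke the known presentation of the cohomology of the Bergman fan (this is the content of \cite{Zh})'', which is circular. If you want a self-contained argument you must actually prove that the flag monomials indexed by nbc flags generate ${}^\Z\F_k(\Sigma_M)$ freely (and over $\Z$ this requires an integral argument --- unimodularity of the flag cones and a torsion-freeness check --- not a dimension count over $\Q$), and then verify the pairing matrix against the nbc basis of $\OS_0^k$ is invertible over $\Z$. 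There is also a concrete error in your degree-one step: for a loopless but non-simple matroid the rays of $\Sigma_M$ are indexed by flats, so parallel elements contribute a single ray and ${}^\Z\F_1(\Sigma_M)$ is a proper sublattice of $\Z^{|M|-1}$ (e.g.\ the rank-$2$ matroid on $\{0,1,2\}$ with $0,1$ parallel gives $\F_1$ of rank $1$); consequently ${}^\Z\F^1(\Sigma_M)$ is not $(\Z^{|M|-1})^*$ but a quotient of it, and the correct degree-one statement is a surjection $W_0\twoheadrightarrow{}^\Z\F^1(\Sigma_M)$ whose kernel is the degree-one piece of ${\mathcal I}_0$. This matters, because your claim that multiplicativity is ``automatic'' rests on having a genuine linear isomorphism in degree one inducing both quotients, which is exactly what still has to be established.
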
 

Note that the cohomology groups depend only on the support of a polyhedral fan.
Thus, if two matroids $M_1$ and $M_2$ have Bergman fans with the same support,
the above theorem shows that the two matroids have isomorphic Orlik-Solomon algebras.

The main application for us will be when $M$ is realizable by a hyperplane arrangement in $\C\PP^n$.
Let $Y$ denote the complement of the arrangement.
Then, it is well known (cf., e.g. \cite{OT}) that the projective Orlik-Solomon algebra calculates cohomology of $Y$.
This leads to the following corollary. 

\begin{theorem}\label{OS}
There is a canonical isomorphism $^\Z\F_k (\Sigma_M) \cong H_k (Y; \Z)$. 
\end{theorem}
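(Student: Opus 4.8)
The plan is to combine the two theorems already available in the excerpt into a single chain of isomorphisms. By the theorem of \cite{Zh} quoted just above, there is a canonical isomorphism $^\Z\F^\bullet(\Sigma_M)\cong\OS_0^\bullet(M)$ of graded $\Z$-algebras, where $M$ is the (loopless) matroid realized by the given hyperplane arrangement in $\C\PP^n$. On the other hand, the Orlik--Solomon theorem (cf. \cite{OT}) identifies the projective Orlik--Solomon algebra $\OS_0^\bullet(M)$ with the integral cohomology ring $H^\bullet(Y;\Z)$ of the complement $Y$. Composing, we obtain a canonical isomorphism $^\Z\F^k(\Sigma_M)\cong H^k(Y;\Z)$ for every $k$.

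Next I would pass from cohomology to homology by dualizing. By Definition \ref{def:local_fk}, $^\Z\F^k(\Sigma_M)=\Hom(\,^\Z\F_k(\Sigma_M),\Z)$, so $^\Z\F_k(\Sigma_M)$ is (up to the natural issue of torsion, addressed below) the $\Z$-dual of $^\Z\F^k(\Sigma_M)$. On the topological side, $Y$ is the complement of a hyperplane arrangement, hence it is a smooth affine complex variety whose cohomology is known to be torsion-free and finitely generated in each degree (the Orlik--Solomon algebra is a free $\Z$-module with an explicit nbc-basis). Universal coefficients then gives $H^k(Y;\Z)\cong\Hom(H_k(Y;\Z),\Z)$ with $H_k(Y;\Z)$ itself free. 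Dualizing the isomorphism $^\Z\F^k(\Sigma_M)\cong H^k(Y;\Z)$ of the previous paragraph and using that both sides are finitely generated free $\Z$-modules (so that double-dualization recovers the original module), we obtain the desired canonical isomorphism $^\Z\F_k(\Sigma_M)\cong H_k(Y;\Z)$.

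The one point requiring care — and what I expect to be the main (though modest) obstacle — is the freeness and finiteness bookkeeping needed to make the dualization canonical rather than merely an abstract isomorphism. One must check that $^\Z\F_k(\Sigma_M)$ is free of finite rank: this follows because it is a subgroup of $\wedge^k\Z^N$, hence automatically torsion-free and finitely generated, so free; and that $^\Z\F^k(\Sigma_M)=\Hom(\,^\Z\F_k(\Sigma_M),\Z)$ is then likewise free of the same rank, with the canonical evaluation map $^\Z\F_k(\Sigma_M)\to\Hom(\,^\Z\F^k(\Sigma_M),\Z)$ an isomorphism. Matching this against the universal-coefficients identification for $Y$, and observing that the isomorphism from \cite{Zh} together with the Orlik--Solomon identification are themselves canonical (induced by the correspondence $f_i\leftrightarrow$ $i$-th hyperplane, equivalently $f_i-f_j\leftrightarrow$ the corresponding vector $e_i$ in the construction of $\Sigma_M$), the composite homology isomorphism is canonical as well. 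The rest is the routine verification that the two chains of canonical maps are compatible with the wedge/cup-product structures, which is immediate from Definition \ref{def:local_fk} and the graded-algebra statements already cited.
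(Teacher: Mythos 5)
Your proposal is correct and follows essentially the same route as the paper, which states Theorem \ref{OS} as an immediate consequence of the isomorphism $^\Z\F^\bullet(\Sigma_M)\cong\OS_0^\bullet(M)$ from \cite{Zh} together with the classical fact (cf. \cite{OT}) that the projective Orlik--Solomon algebra computes $H^\bullet(Y;\Z)$. The only thing you add is the explicit freeness/universal-coefficients bookkeeping needed to pass from the cohomological statement to the homological one, a step the paper leaves implicit; your handling of it is fine.
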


\subsection{Tropical homology, the cellular version}\label{sec:cellular}
Let $X\subset \T\PP^N$ be a smooth regular projective $\Q$-tropical variety.
The polyhedral decomposition of $X$ into faces gives it a natural cell structure. 

Let $x\in X$ be a point in the relative interior of a face $\Delta_x$ of sedentarity $I$ in $X$. We define $\Sigma(x)$, the {\it fan at} $x$,
to be the cone in $\T^\circ_I \cong \R^{N-|I|}$ consisting of vectors $u\in \T^\circ_I$ such that $x+\epsilon u\in X\cap \T^\circ_I$
for a sufficiently small $\epsilon >0$ (depending on $u$). 

\begin{definition}\label{coefficient_groups} 
We define the coefficient groups $\F_k(x)$ and $\F^k(x)$ to be $^\Z\F_k(\Sigma(x)) \otimes \Q$ and $^\Z\F^k(\Sigma(x)) \otimes \Q$, respectively. 
\end{definition} 

Note that the groups $\F_k(x)$ and $\F_k(y)$ are canonically identified by translation if $x$ and $y$ belong to the relative interior of the same face $\Delta$ of $X$. Thus, we can use the notation $\F_k(\Delta)$. We can also consider the {\em relative coefficient groups}
$\bar\F_k(\Delta)$ defined as $^\Z\F_k(\Sigma_\Delta) \otimes \Q$, 
where $\Sigma_\Delta$ is the relative fan at $\Delta$ 
(see Section \ref{sec:varieties}) if $\Delta$ is mobile; 
if $\Delta$ is sedentary, then $\Sigma_\Delta$ is 
the relative fan at the parent mobile face of $\Delta$ in some affine chart containing the relative interior of $\Delta$. 

If for two points $x,y$ we have $\Delta_x\succ\Delta_y$ then there are
natural homomorphisms
\begin{equation}\label{cosheafmap}
\iota:\F_k(x)\to\F_k(y).
\end{equation}
To define the maps \eqref{cosheafmap} we take an affine chart $U^{(i)}\ni y$. If $I(y)=I(x)$, 
then any face adjacent to $x$ is contained in some face adjacent to $y$ and
the inclusion induces the required map. If $I(y)\neq I(x)$ (note that we must
have $I(y)\supset I(x)$), then the required map
is given by the projection along the divisorial directions indexed by $I(y)\setminus I(x)$. 

For a pair of adjacent faces $\Delta\prec \Delta'$, the map \eqref{cosheafmap} and its dual 
can be rewritten as 
\begin{equation}\label{eq:boundary_map}
\iota:\F_k(\Delta')\to\F_k(\Delta), \qquad \iota^*:\F^k(\Delta)\to\F^k(\Delta').
\end{equation}
This allows us to define a  complex $C_\bullet(X; \F_p)$, where 
$$C_q(X; \F_p)= \oplus \F_p(\Delta).$$
Here, the direct sum is taken over all $q$-dimensional faces of $X$. 
We can write a chain in $C_q(X; \F_p)$ as $\sum \beta_\Delta \Delta$. The boundary map 
$$\dd: C_q(X; \F_p) \to C_{q-1}(X; \F_p)$$
is the usual cellular boundary combined with the maps $\iota$
in \eqref{eq:boundary_map} for any pair of faces $\Delta\succ_1\Delta'$. The groups
$$H_q(X; \F_p) = H_q (C_\bullet(X; \F_p), \dd)$$
are called the {\em {\rm (}cellular{\rm )} tropical $(p, q)$-homology} groups. 

We can consider the dual cochain complex $C^\bullet(X; \F^p)$ of linear functionals on faces $\Delta$ of $X$ 
with values in $\F^p(\Delta)$ and define the differential $\delta$ as the usual coboundary
combined with the maps $\iota^*$ in  \eqref{eq:boundary_map}.
This defines the {\em {\rm (}cellular{\rm )} tropical $(p, q)$-cohomology} groups 
$$
H^{q}(X;\F^p)=H^q(C^\bullet(X; \F^p), \delta).
$$

\subsection{Other homology theories}
We may interpret
$\F_k(x)$ as a system of coefficients suitable to define singular homology groups on $X$.
Namely, we consider finite formal sums 
$$\sum \beta_\sigma \sigma,
$$
where each $\sigma:\Delta^q\to X$ is a singular $q$-simplex which has image in a single face of $X$ and is such that for each relatively open face $\Delta'$
of $\Delta^q$ the image $\sigma(\Delta')$
is contained in a single face of $X$.
We say that $\tau=\sigma|_{\Delta'}$ is a face of $\sigma$. 
Here $\beta_\sigma\in \F_k(\Delta)$, where the relative interior of $\Delta$ contains the image of the relative interior of $\Delta^q$.

These chains form a complex $C^{sing}_\bullet(X; \F_k)$ with the differential $\dd$ given by the standard singular differential combined with the maps $\iota$ in \eqref{eq:boundary_map}.
The elements of $C^{sing}_\bullet(X; \F_k)$ are called {\em tropical chains}. 
The groups
$$H_{p,q}(X)=H_q(C^{sing}_\bullet(X; \F_p), \dd)
$$
are called the {\em singular tropical $(p, q )$-homology}
groups. 

One can also consider \v Cech version of tropical (co)homology thinking of the coefficients $\F_k$ and $\F^k$ as constructible cosheaves and sheaves, respectively, on $X$. We refer the reader for details to \cite{MZh}. 

For the rest of the paper we stick to the cellular version of tropical homology. 

\subsection{Examples of homology computations}\label{sec:examples-calculations}

Here are some examples of calculation of tropical $(p, q)$-homology groups
(these calculations do not use Theorem \ref{thm:main}). 

\begin{example}\label{examples:tropical-homology1} 
Let $X \subset \T\PP^N$ be an $n$-dimensional smooth regular projective $\Q$-tropical variety.
Since $\F_0(\Delta) = \Q$ for any face $\Delta$ of $X$
(and for any pair of adjacent faces $\Delta \prec \Delta'$, the map 
$\iota:\F_0(\Delta')\to\F_0(\Delta)$ is the identity), 
one has
$$H_q(X; \F_0) = H_q(X; \Q)$$  
for any $q = 0$, $\ldots$, $n$.
\end{example} 

\begin{example}\label{examples:tropical-homology2}
Since $\T\PP^1$ is contractible as a topological space, we have
$$H_{0}(\T\PP^1; \F_0) = \Q \quad \mbox{and}\quad H_1(\T\PP^1; \F_0) = 0.$$ 
Consider a subdivision $X$ of $\T\PP^1$ formed by one vertex of sedentarity $0$ (denote this vertex by $O$),
two vertices of positive sedentarity (denote them by $-\infty$ and $+\infty$), and two edges
(denote them $\R_+$ and $\R_-$ in such a way that $\R_\pm$ is adjacent to $\pm\infty$). 
We have $\F_1(O) = \Q$ and $\F_1(-\infty) = \F_1(+\infty) = 0$. 
In addition, $\F_1(\R_+) = \Q$ and $\F_1(\R_-) = \Q$ (the morphisms $\F_1(\R_\pm) \to \F_1(O)$ 
being the identities). 
This gives
$$H_0(\T\PP^1; \F_1)=0 \quad \mbox{and}\quad H_1(\T\PP^1; \F_1) = \Q.$$ 
The above calculation can be easily generalized in order to determine
the tropical $(p, q)$-homology groups of $\T\PP^N$ for arbitrary $N$. 
\end{example} 

\begin{example}\label{examples:tropical-homology3}  
Let $X\subset \T\PP^2$ be a generic $\Q$-tropical line.
The tropical line $X$ has one vertex of sedentarity $0$ (of valency $3$);
denote this vertex by $O$. In addition, $X$ has 
three vertices $V_0$, $V_1$ and $V_2$ of positive sedentarity 
and three edges $E_0$, $E_1$ and $E_2$.
Since $X$ is contractible as a topological space, we have
$$H_{0}(X; \F_0) = \Q \quad \mbox{and}\quad H_1(X; \F_0) = 0.$$ 
Furthermore,  $\F_1(O) = \Q^2$ and $\F_1(V_i) = 0$ for any $i = 0, 1, 2$.
For any edge $E_i$ of $X$, one has $\F_1(E_i) \simeq \Q$, 
and the images of the embeddings of $\F_1(E_i)$ in $\F_1(O) = \Q^2$ 
are the subspaces 
$$
\{(x, y) \in \Q^2 \; | \; y = 0\}, \quad \{(x, y) \in \Q^2 \; | \; x = 0\}, \quad \mbox{and}\quad \{(x, y) \in \Q^2 \; | \; x = y\}. 
$$
This gives
$$H_0(X; \F_1)=0 \quad \mbox{and}\quad H_1(X; \F_1) = \Q.$$ 
The calculation made can be generalized to determine
the tropical $(p, q)$-homology groups 
of any smooth regular projective $\Q$-tropical curve $X \subset \T\PP^N$. 
\end{example} 

Many other examples of calculations of tropical $(p, q)$-homology groups
can be found e.g. in \cite{thesis-Shaw}.


\section{Complex degenerations and tropical limit}
In this section we present a
connection between complex and tropical geometry.
Most of the content presented here develops earlier
results that can be found in e.g. \cite{IMS} and
\cite{Jo}. Our main emphasis is 
Theorem \ref{thm-compactness}
(the compactness theorem)
corresponding
to Proposition 3.9 of \cite{Mik05} in the special
case of hypersurfaces.

Tropical varieties
appear as certain limits of (scaled sequences of) complex varieties. First,
we define the coarse limit as a topological subspace of $\T\PP^N$.
Then, we establish the polyhedrality of this limit and put weights
on its facets in order to obtain  a more refined version.

\subsection{Coarse tropical limit}\label{section:coarse_limit}

\begin{definition}
A {\em scaled sequence} is a set $A$
together with a {\em scaling} map $t:A\to \R$ which is unbounded from above.
A scaled subsequence is a subset $A'\subset A$ with the induced scaling (which is still required to be unbounded).
\end{definition}

We often drop the word ``scaled''. By saying ``$\alpha \in A$ is large'' we mean that $t_\alpha = t(\alpha) \in \R$ is large.
Also, sometimes, we write $\alpha>\alpha'$ instead of $t_\alpha > t_{\alpha'}$. 

\begin{example}\label{eg:scaled_sequence}
Here are some examples of scaled sequences:
\begin{enumerate}
\item conventional sequences $A=\N$ with the inclusion $t:\N\to \R$;
\item $A=\R_{>0}$ with the inclusion $t:\R_{>0} \to \R$;
\item the punctured disc (the most relevant scaled sequence for us)
$$\mathcal D^*= \{ z\in\C \ \suchthat \ 0<|z|<1 \}
$$
with the scaling $t(z)=|z|^{-1}$.
\end{enumerate}
\end{example}

Consider a scaled sequence $X_\alpha\subset\cp^N, \alpha \in A,$ of projective algebraic varieties. From now on we always assume that $t_\alpha  > 1$ for any $\alpha \in A$. We have the map
\begin{equation}\label{log-tpn}
\Log_{t_\alpha}:\cp^N\to\tp^N
\end{equation}
defined by
$(z_0:\dots:z_N)\mapsto (\log_{t_\alpha}|z_0|:\dots:\log_{t_\alpha}|z_N|).$
Note that the map is well defined since the $(N+1)$-tuples of coordinates in $\C\PP^N$ equivalent under multiplication by a nonzero scalar are mapped to $(N+1)$-tuples equivalent
under addition of a scalar in $\T\PP^N$. Also the map respects the coordinate stratifications of $\C\PP^N$ and $\T\PP^N$, that is, each stratum $\cp^N_I\subset \C\PP^N$
is sent to the stratum $\tp^N_I\subset \T\PP^N$.
The set $$\am_\alpha=\Log_{t_\alpha}(X_\alpha)\subset\tp^N$$
is called the {\em amoeba} of $X_\alpha$, cf. \cite{GKZ}.

\begin{defn}[Coarse tropical limit in $\tp^N$]
\label{ctl-tpn}
We say that a closed subset $Y\subset\tp^N$ is the
{\em coarse tropical limit}
of the scaled sequence $X_\alpha\subset\cp^N$
if the amoebas
$\am_\alpha\subset\tp^N$ converge to $Y\subset\tp^N$ in the Hausdorff sense.
\end{defn}

This means that if we choose a metric $d$ compatible with the topology on $\tp^N$,
then the Hausdorff distance
$$
\max\{\sup\limits_{x\in  \am_\alpha}d(x,Y), \sup\limits_{y\in Y} d(\am_\alpha,y)\}
$$
between $\am_\alpha$ and $Y$ tends to 0 as $t_\alpha \to \infty$. Note that if the Hausdorff distance between two closed subsets $Y, Y' \subset \T\PP^N$ is 0 then $Y=Y'$. That is, if a limit exists, it is unique.

\begin{example}\label{example:points}
Since $\T\PP^N$ is compact, any sequence of points $c_\alpha \in \C\PP^N$ has a subsequence which has a course tropical limit.
\end{example}

\begin{definition}
We say that a polyhedral complex $Z \subset \T\PP^N$  of dimension at most $n$ is an {\em attractor} for a sequence of $n$-dimensional varieties $X_\alpha \subset \C\PP^N$ if it contains all accumulation points of the amoebas $\am_\alpha$. 
\end{definition}

\subsection{Weights on facets of an attractor}\label{section_weights} 
Let $Z\subset\tp^N$ be an attractor for a sequence of $n$-dimensional varieties $X_\alpha\subset \C\PP^N$ of degree $d$. Denote by $Z^\epsilon$ a small tubular $\epsilon$-neighborhood of $Z$. We assume that $\alpha$ is sufficiently large, so that the amoebas $\am_\alpha$ sit entirely in $Z^\epsilon$.

Fix a subset $I \subset \{0, 1, \ldots, N\}$, and
let $M$ be an $(N-n)$-dimensional cooriented $I$-ball in $ \T\PP^N$ such that $\dd M \cap Z^\epsilon=\emptyset$ (recall that $\dd M$ means the closure in $\T\PP^N$ of the mobile part of the boundary of $M$). 
We define $\mathcal K^\alpha_{M} \in H_n((\C^\times)^{N-I}\times \C^I;\Z)=\Lambda^n(\Z^N/\Z^I)$ to be the intersection class of a small perturbation of $\Log_{t_\alpha}^{-1}(M)$ with $X_\alpha \cap ((\C^\times)^{N-I}\times \C^I)$. 

\begin{lemma}\label{lemma:linking}
Let $M_s$, $s \in [0, 1]$, be a continuous family of 
$(N-n)$-dimensional $I$-balls such that $M_s \cap Z^\epsilon=\emptyset$ for each $s \in [0, 1]$.  
Then, $\mathcal K^\alpha_{M_0} = \mathcal K^\alpha_{M_1}$. 
\qed
\end{lemma}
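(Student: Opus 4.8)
The plan is to show that the intersection class $\mathcal K^\alpha_M$ is invariant under homotopies of $M$ that avoid the attractor neighborhood $Z^\epsilon$, which is really a linking-number stability statement. First I would observe that $\Log_{t_\alpha}^{-1}(Z^\epsilon)$ is a closed neighborhood of the amoeba $\am_\alpha$ in $\C\PP^N$ that contains all of $X_\alpha$ (for $\alpha$ large), hence the preimage $\Log_{t_\alpha}^{-1}(M_s)$ of any $I$-ball with $M_s \cap Z^\epsilon = \emptyset$ is disjoint from $X_\alpha$ (before perturbation, the $\Log$-preimages of $M_s$ and $X_\alpha$ lie in complementary regions). So the intersection class is computed between cycles whose supports can only meet after a compactly supported perturbation, and the perturbation can be chosen uniformly over the compact parameter interval $s \in [0,1]$.

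The key steps, in order, are: (i) identify the ambient stratum: the $I$-ball $M_s$ sits in $\T\PP^N$ with its mobile boundary disjoint from $Z^\epsilon$, so $\Log_{t_\alpha}^{-1}(M_s)$ is a compact oriented $(N-n)$-real-dimensional submanifold-with-corners in $(\C^\times)^{N-I} \times \C^I$, whose coorientation gives a well-defined class in $H_n((\C^\times)^{N-I}\times\C^I;\Z) = \Lambda^n(\Z^N/\Z^I)$ via Poincaré–Lefschetz duality (relative to the part at infinity); (ii) note that $s \mapsto \Log_{t_\alpha}^{-1}(M_s)$ is a continuous family of such cooriented cycles, giving a homology in the pair $((\C^\times)^{N-I}\times\C^I,\ \partial)$ between the chains over $s=0$ and $s=1$ whose trace avoids $X_\alpha$ entirely, since $M_s \cap Z^\epsilon = \emptyset$ forces $\Log_{t_\alpha}^{-1}(M_s) \cap X_\alpha = \emptyset$; (iii) conclude by the standard fact that the intersection number (equivalently, the pushforward of the fundamental class under inclusion into the complement of $X_\alpha$, then evaluated against $[X_\alpha]$) depends only on the homology class of the cycle in the complement of $X_\alpha$, hence $\mathcal K^\alpha_{M_0} = \mathcal K^\alpha_{M_1}$.

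The point requiring care — the main obstacle — is keeping track of the behavior at infinity, i.e. in the strata $(\C^\times)^{N-I}\times\C^I$ rather than $(\C^\times)^N$: the $I$-ball has corners along the divisorial strata, the condition $\partial M \cap Z^\epsilon = \emptyset$ only controls the mobile part of the boundary, and one must check that the homology $M_s$ traces out in the relative group is genuinely rel the appropriate boundary-at-infinity so that the intersection pairing with $X_\alpha$ (which itself stratifies over the $\C^I$ factors) is well-defined and locally constant in $s$. Once the relative (co)homological framework is set up correctly — so that $\mathcal K^\alpha_M$ is literally the image of a relative fundamental class under a map that factors through $H_*$ of the complement of $X_\alpha$ — the invariance is formal, and a continuity/compactness argument over $s \in [0,1]$ finishes it without any computation.
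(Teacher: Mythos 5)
The paper gives no argument here at all (the \qed is part of the statement): the lemma is taken as the standard homotopy invariance of an intersection/linking class. The problem with your write-up is the hypothesis you build on. As printed, the condition ``$M_s \cap Z^\epsilon = \emptyset$'' has to be read as ``$\dd M_s \cap Z^\epsilon = \emptyset$'': the class $\mathcal K^\alpha_M$ is defined for $I$-balls whose \emph{mobile boundary} avoids $Z^\epsilon$, and the lemma is invoked precisely for membranes through a point of $F^\circ$ and for crepes through points near $Z_I$ --- balls that meet $Z^\epsilon$, indeed meet $Z$ itself. If one took the hypothesis literally, then (for $\alpha$ large, when $\am_\alpha \subset Z^\epsilon$) the sets $\Log_{t_\alpha}^{-1}(M_s)$ are disjoint from $X_\alpha$, every class $\mathcal K^\alpha_{M_s}$ is zero, and the lemma is vacuous; your three-step argument then proves only this trivial statement, and the fact that you never observe that the classes vanish indicates the machinery is aimed at the wrong target.

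Under the intended hypothesis your key step (ii) is exactly what fails: the trace $\bigcup_s \Log_{t_\alpha}^{-1}(M_s)$ does meet $X_\alpha$ (that is the whole point --- $\mathcal K^\alpha_{M_s}$ records this intersection), so you cannot place the homotopy in the complement of $X_\alpha$ and invoke ``the intersection class depends only on the homology class in the complement.'' The correct argument controls only the boundary: after a small perturbation, the trace of the family is a chain whose boundary consists of $\Log_{t_\alpha}^{-1}(M_1)$, $-\Log_{t_\alpha}^{-1}(M_0)$, and the part swept by $\Log_{t_\alpha}^{-1}(\dd M_s)$; intersecting with $X_\alpha$ yields an $(n+1)$-chain in $(\C^\times)^{N-I}\times\C^I$ whose boundary is the $s=1$ intersection cycle minus the $s=0$ one, because the part coming from $\dd M_s$ never meets $X_\alpha$ (as $\dd M_s$ avoids $Z^\epsilon \supset \am_\alpha$ for all $s$). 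Hence the two intersection cycles are homologous and $\mathcal K^\alpha_{M_0}=\mathcal K^\alpha_{M_1}$. Equivalently, one may express $\mathcal K^\alpha_M$ as the linking class of the cycle $\Log_{t_\alpha}^{-1}(\dd M)$ with $X_\alpha$ and use homotopy invariance of that class in the complement of $X_\alpha$; your step (iii) is the right fact, but it must be applied to the boundary cycles, which do lie in the complement, not to $\Log_{t_\alpha}^{-1}(M_s)$ itself.
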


We now introduce two special types of $(N-n)$-dimensional $I$-balls in $\T \PP^N$. 
First, let $F$ be an oriented $n$-dimensional facet of $Z$ with the relative interior $F^\circ \subset\tp^\circ_I$.
 A {\em membrane} $M_F^\epsilon \subset \T\PP^N$ is a small $(N-n)$-dimensional $I$-ball, such that $\dd M_F^\epsilon \cap Z^\epsilon=\emptyset$ and $M_F^\epsilon$ intersects $Z$ in a single point $x \in F^\circ$. An orientation of $F$ induces a coorientation of $M_F^\epsilon$.
This defines $\mathcal K^\alpha_{M_F^\epsilon} \in H_n((\C^\times)^{N-I}\times \C^I;\Z)=\Lambda^n(\Z^N/\Z^I)$.
Membranes through $F^\circ$ allow us to associate to $F$ a class $\mathcal K^\alpha_F  \in \Lambda^n (\Z^N/\Z^I)$ (cf. Lemma \ref{lemma:linking}). 

Another type comes from choosing a cooriented $(N-n-|I|)$-dimensional rational 
linear subspace $L_I$ in $\tp^\circ_I \cong \R^{N-I}$.
Let $L$ denote the pullback of $L_I$ to $\T\PP^N$ via the projection $\pi_I^\T : \T\PP^N \to \T\PP_I$. 
An $(N-n)$-dimensional $I$-ball $M_{L_I}$ parallel to $L$ with $\dd M_{L_I} \cap Z^\epsilon=\emptyset$ 
is called an {\em $L_I$-crepe}, or simply, a {\em crepe}. 

Let $y\in Z_I$ be a point such that its $\epsilon$-neighborhood in $\T\PP^N$ is disjoint from all $Z_J$ for 
$J \subsetneq I$, and let $L_I$ be transversal (in $\T\PP^\circ_I$) to each $n$-dimensional face $F$ of $Z_I$ such that $y$
belongs to the closure of $F$. 
Then, through any point in the $\epsilon$-neighborhood of $y$ one can trace an $L_I$-crepe. 

Now let $M_{L_I}$ be an $L_I$-crepe, and let $\LL$ be the product of $\C^I$ with the $(N-n-|I|)$-dimensional subgroup 
of $(\C^\times)^{N-I}$ corresponding to $L_I$ in $\R^{N-I}$. Then, to any point $z\in \Log^{-1}_{t_\alpha}(M_{L_I})$
we can associate a complex $(N-n)$-dimensional submanifold $\LL_z \subset (\C^\times)^{N-I}\times \C^I$, 
which is the intersection of $\Log_{t_\alpha}^{-1}(M_{L_I})$
with the translate $\LL'$ of  $\LL$ such that $\LL'$ contains the point $z$.   

The following calculation is a direct consequence of the Poincar\'e duality.

\begin{lemma} \label{lemma:LL}
Let $L_I\subset\R^{N-I}$ be a cooriented $(N-n-|I|)$-dimensional 
rational linear subspace, and let $M_{L_I}$ be an $L_I$-crepe. Then,
the intersection number of $X_\alpha$ with $\LL_z$ 
equals $\mathcal K^\alpha_{M_{L_I}} \wedge \Vol_{L_I} / \Vol_{\Z^{N-I}}$ for any $z\in \Log_{t_\alpha}^{-1}(M_{L_I})$.
\qed
\end{lemma}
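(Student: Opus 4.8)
The plan is to identify the intersection number $X_\alpha \cdot \LL_z$ with the pairing between the homology class $[X_\alpha] \in H_n$ and a cohomology class Poincar\'e dual to $\LL_z$, all computed inside the ambient manifold $(\C^\times)^{N-I} \times \C^I$. Concretely, the restriction of $\Log_{t_\alpha}^{-1}(M_{L_I})$ to this ambient space is a fibration over the $I$-ball $M_{L_I}$ whose fibers are the translates $\LL'$ of $\LL$ (the product of $\C^I$ with the subtorus of $(\C^\times)^{N-I}$ corresponding to $L_I$). Since $M_{L_I}$ is cooriented and $L_I$ is cooriented, $\LL_z$ is a cooriented cycle and its Poincar\'e dual is a well-defined class in $H^n$.

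The first step is to record that $H_n\big((\C^\times)^{N-I}\times \C^I;\Z\big) = \Lambda^n(\Z^N/\Z^I)$, using that $\C^I$ is contractible and $(\C^\times)^{N-I}$ is homotopy equivalent to the torus $(S^1)^{N-I}$, whose homology is the exterior algebra on $\Z^{N-I} = \Z^N/\Z^I$. Dually, $H^n$ is the exterior algebra on the dual lattice. The second step is to compute the Poincar\'e dual of the fiber class: a translate $\LL'$ of $\LL$ is Poincar\'e dual to the wedge of the coordinate covectors that are constant along $\LL$, i.e.\ the covectors pulled back from the coorientation data of $L_I$ in $\R^{N-I}$; up to the normalization by the covolume of the sublattice this is exactly $\Vol_{L_I}/\Vol_{\Z^{N-I}}$. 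The third step is to observe that $\mathcal K^\alpha_{M_{L_I}}$ was \emph{defined} as the intersection class in $H_n$ of a small perturbation of $\Log_{t_\alpha}^{-1}(M_{L_I})$ with $X_\alpha$; capping $[X_\alpha]$ against the class of the total space $\Log_{t_\alpha}^{-1}(M_{L_I})$ and then intersecting further with a single fiber amounts, under Poincar\'e duality, to wedging $\mathcal K^\alpha_{M_{L_I}}$ with the fiber's dual covector and evaluating. Putting these together gives the stated formula $X_\alpha \cdot \LL_z = \mathcal K^\alpha_{M_{L_I}} \wedge \Vol_{L_I}/\Vol_{\Z^{N-I}}$, and Lemma \ref{lemma:linking} guarantees the answer is independent of the chosen point $z$ (equivalently, of the translate $\LL'$) since moving $z$ moves the membrane-type ball within the family of $I$-balls avoiding $Z^\epsilon$.

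The main obstacle is bookkeeping the orientations and the lattice normalizations correctly: one must check that the coorientation of $M_{L_I}$ as an $I$-ball, the coorientation of $L_I$ in $\R^{N-I}$, and the complex orientations on $(\C^\times)^{N-I}\times\C^I$ and on $\LL$ all combine so that the Poincar\'e-dual covector of a fiber is precisely $\Vol_{L_I}$ divided by the covolume of the ambient lattice $\Z^{N-I}$, with no stray sign or index. Once the orientation conventions are pinned down, the identity is a formal consequence of Poincar\'e duality applied fiberwise over $M_{L_I}$, exactly as asserted; hence the proof is marked \qed in the statement and the verification is the routine (if fiddly) duality computation sketched above.
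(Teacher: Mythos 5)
Your argument is correct and is exactly what the paper has in mind: the lemma is stated with \qed and justified only by the remark that it is ``a direct consequence of the Poincar\'e duality,'' and your fiberwise duality computation in $(\C^\times)^{N-I}\times\C^I$ — identifying $H_n$ with $\Lambda^n(\Z^N/\Z^I)$, taking the Poincar\'e dual of a translate of $\LL$, and pairing it with the defining cycle of $\mathcal K^\alpha_{M_{L_I}}$, with Lemma \ref{lemma:linking} giving independence of $z$ — is precisely that intended argument, just written out in detail.
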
 

A very useful class of $I$-balls is provided by the intersection of the above two types.
Let $F$ be an oriented $n$-dimensional facet of $Z$ 
with the relative interior $F^\circ \subset\tp^\circ_I\cong \R^{N-I}$, 
and let $L_I$ be an $(N-n-|I|)$-dimensional 
rational linear subspace in $\R^{N-I}$. If $L_I$ is integrally transversal to $F$, 
we call a small crepe $M^\epsilon_{L_I,x}$ through a point $x\in F^\circ$ a {\em linear membrane}. 
By Lemma \ref{lemma:linking} any linear membrane through $F^\circ$ calculates the class $\mathcal K^\alpha_F$. 

\begin{lemma}\label{lem-weight}
There exists $T_F \in \R$ such that for any $\alpha$
with $t_\alpha >T_F$  the class $\mathcal K^\alpha_F$
is a non-negative integer multiple of $\Vol_F$. 
\end{lemma}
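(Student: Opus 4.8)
\textbf{Proof proposal for Lemma \ref{lem-weight}.}

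The plan is to show two things: that the intersection class $\mathcal K^\alpha_F \in \Lambda^n(\Z^N/\Z^I)$ eventually becomes a multiple of $\Vol_F$ (the primitive generator of $\Lambda^n$ corresponding to the tangent lattice of $F$), and that this multiple is a non-negative integer. First I would observe that, since $F$ is an $n$-dimensional facet of an $n$-dimensional attractor $Z$ and the amoebas $\am_\alpha$ concentrate onto $Z$, the class $\mathcal K^\alpha_F$ can be computed by a linear membrane $M^\epsilon_{L_I,x}$ through a point $x\in F^\circ$, where $L_I$ is a rational linear subspace integrally transversal to $F$; this is exactly the reduction made in the sentence preceding the lemma (using Lemma \ref{lemma:linking}). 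The key point is that the support $\Log_{t_\alpha}^{-1}(M^\epsilon_{L_I,x})$ is a tube over a small ball transverse to $F$, so for $t_\alpha$ large the piece of $X_\alpha$ it meets lies in a neighborhood of $F$ on which $X_\alpha$ looks (after applying $\Log_{t_\alpha}$) like a small perturbation of the linear subspace tangent to $F$.

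Next I would make the multiple-of-$\Vol_F$ claim precise. Choose a complementary rational decomposition $\Z^N/\Z^I = \Lambda_F \oplus \Lambda^\perp$, where $\Lambda_F$ is the (primitive) tangent lattice to $F$ and $\Lambda^\perp$ corresponds to $L_I$. Then $\Lambda^n(\Z^N/\Z^I)$ decomposes accordingly, and the component of $\mathcal K^\alpha_F$ in directions involving $\Lambda^\perp$ is the intersection number of $X_\alpha$ with the tube $\Log_{t_\alpha}^{-1}(M^\epsilon_{L_I,x})$ computed against a complex submanifold $\LL_z$ transverse to those directions — but by Lemma \ref{lemma:LL} such an intersection number, for a crepe whose $\dd M$ avoids $Z^\epsilon$ entirely, is governed by linking through the rest of $Z$; moving the crepe off of $F$ (keeping $\dd M$ away from $Z^\epsilon$, using the hypothesis that $Z$ has dimension $\le n$ so there is room) shows these ``off-$F$'' components vanish for $t_\alpha$ large. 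Hence $\mathcal K^\alpha_F = c^\alpha_F \, \Vol_F$ for some scalar $c^\alpha_F$, and $c^\alpha_F$ is an integer because $\mathcal K^\alpha_F$ lies in the integral lattice $\Lambda^n(\Z^N/\Z^I) = H_n((\C^\times)^{N-I}\times\C^I;\Z)$ while $\Vol_F$ is primitive.

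Finally, for non-negativity and the stabilization constant $T_F$: the integer $c^\alpha_F$ is, by Lemma \ref{lemma:LL}, the intersection number of $X_\alpha$ with the complex submanifold $\LL_z$, where $\LL_z$ is a translate of the complex subgroup $\LL$ dual to $L_I$, intersected with a (complex) tube. This $\LL_z$ is a complex analytic submanifold of complementary dimension meeting $X_\alpha$ inside the relatively compact piece $\Log_{t_\alpha}^{-1}(M^\epsilon_{L_I,x})$, and since $X_\alpha$ is a complex algebraic variety, every local intersection contributes with sign $+1$ (complex intersections are positive). One also has to check that the boundary of the tube contributes nothing, which again follows from $\dd M^\epsilon_{L_I,x}\cap Z^\epsilon = \emptyset$ together with the concentration of $\am_\alpha$ on $Z$: there is $T_F$ so that for $t_\alpha > T_F$ no intersection points escape to the boundary of the tube. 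This $T_F$ is the constant in the statement. I expect the main obstacle to be the last point — making rigorous that for $t_\alpha$ large the intersection $X_\alpha \cap \LL_z$ is properly contained in the interior of the tube (so that it is a genuine, boundaryless, positively-counted complex intersection number and does not jump), which is where the Hausdorff convergence of the amoebas to $Z$ and a compactness argument must be invoked carefully; the positivity of complex intersection numbers then does the rest.
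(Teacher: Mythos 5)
There is a genuine gap, and it sits exactly at the point your proposal treats most lightly: showing that $\mathcal K^\alpha_F$ has no components other than the one along $\Vol_F$. Your mechanism for this is to decompose $\Z^N/\Z^I=\Lambda_F\oplus\Lambda^\perp$ and to claim that the components of $\mathcal K^\alpha_F$ ``involving $\Lambda^\perp$'' are intersection numbers against suitable complex submanifolds $\LL_z$, which then vanish by ``moving the crepe off of $F$.'' This does not work. To isolate a coefficient $c^\alpha_J$ with $J\neq J_0$ you would have to pair $\mathcal K^\alpha_F$ with a complementary polyvector $w$ satisfying $\Vol_F\wedge w=0$; such a $w$ is \emph{not} integrally transversal to $F$, so there is no linear membrane through $F^\circ$ parallel to it, and Lemma \ref{lemma:LL} (and the positivity of complex intersections) simply does not apply to that pairing. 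Moreover, a membrane through $F^\circ$ cannot be homotoped off $Z$ while keeping $\dd M$ out of $Z^\epsilon$ -- it links $F$ -- so Lemma \ref{lemma:linking} gives you no vanishing; if such a homotopy existed it would kill the whole class, including the $\Vol_F$-component. Your earlier assertion that near $F$ the variety $X_\alpha$ ``looks like a small perturbation of the linear subspace tangent to $F$'' is essentially the conclusion of the lemma and is not provided by Hausdorff convergence of amoebas, which only constrains the amoeba as a set, not the homology class of the local piece of $X_\alpha$.

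The paper's proof handles this step differently and you would need some version of its argument. It writes $\mathcal K^\alpha_F=\sum_{|J|=n}c^\alpha_J e_J$, first obtains a \emph{uniform bound} on all $|c^\alpha_J|$ by pairing with the generating family of polyvectors $\Vol_{L(h)}$ (each realizable by a linear membrane, so each pairing is a local intersection number bounded by the degree $d$), and then, for each $J\neq J_0$, pairs with a transversal polyvector of the form $w_J=\pm\wedge_{\ell}(e_{i_\ell}\pm C e_{j_\ell})\wedge e_{\{n+1,\dots,N\}\setminus J}$ with $\Vol_F\wedge w_J=1$ and $C$ a large integer: positivity forces $\mathcal K^\alpha_F\wedge w_J\ge 0$, while this pairing equals $\pm C^k c^\alpha_J$ plus terms of lower order in $C$, so the uniform bound makes it negative for a suitable sign choice unless $c^\alpha_J=0$. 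Your reduction to linear membranes, the integrality argument (primitivity of $\Vol_F$), and the non-negativity of $c^\alpha_{J_0}$ via positivity of complex intersections are all fine and agree with the paper; but the vanishing of the off-diagonal coefficients is the real content of the lemma, and as written your proposal does not prove it. Incidentally, the point you flag as the main obstacle -- that the intersections stay in the interior of the tube -- is comparatively routine, following from $\dd M\cap Z^\epsilon=\emptyset$ and the Hausdorff convergence.
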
 

\begin{proof}
We consider the case 
$F^\circ\subset\R^N$. The sedentary case is similar.

Let us fix a basis $e_1,\dots ,e_N\subset \Z^N$ such that $\Vol_F=e_1\wedge\dots \wedge e_n$. 
For every map of sets 
$$h: \{n+1, \dots, N\} \to \{0,1,\dots, n\}, 
$$ 
we consider the subspace $L(h)$ spanned by the vectors $e_{n+1}+e_{h(n+1)}, \dots, e_N+e_{h(N)}$ 
oriented such that  $\Vol_{L(h)}=(e_{n+1}+e_{h(n+1)}) \wedge \dots \wedge (e_N+e_{h(N)})$. 
We set $e_0=0$; in particular, $\Vol_{L(0)}=e_{n+1}\wedge\dots \wedge e_N$. 
Note that $\Vol_F \wedge \Vol_{L(h)}=1$ for any $h$ (here we tacitly divide by the volume element $e_1\wedge\dots \wedge e_N$), that is, all $L(h)$ are integrally transversal to $F$. 
It is easy to see that the collection of all $(n+1)^{N-n}$ primitive polyvectors $\{\Vol_{L(h)}\}$ generate  $\Lambda^{N-n} (\Z^N)$. 

We assume that $\epsilon$ is small enough so that 
every $L(h)$ is parallel to a linear membrane through 
some point of $F^\circ$. We write the class 
$$\mathcal K^\alpha_F = \sum_{|J|=n} c^\alpha_J e_J
$$
in the basis of primitive polyvectors $e_J=\wedge_{i\in J} e_J \in \Lambda^n(\Z^N)$,
where $J$ runs over the increasing length $n$ sequences  in $\{1,\dots,N\}$. 

Since the polyvectors $\{\Vol_{L(h)}\}$ generate  $\Lambda^{N-n} (\Z^N)$, the coefficients $c^\alpha_J$
can be calculated by taking products $\mathcal K^\alpha_F \wedge \Vol_{L(h)}$, that is, according to Lemma  \ref{lemma:LL}, by locally intersecting $X_\alpha$ with complex manifolds $\LL(h)$.  
Thus, all $c^\alpha_J$ are uniformly (for all large $\alpha$) bounded by some constant
(which depends
on the degree of varieties $X_\alpha$ and the basis $e_1,\dots ,e_N$). 

Now let $w\in\Lambda^{N-n}(\Z^N)$ be any primitive polyvector
such that $\Vol_F  \wedge w =1$. We assume that $\epsilon$ 
is small enough so that there is a linear membrane parallel to $w$ and passing through some point of $F^\circ$. 
Then, since complex manifolds intersect non-negatively, we must have $\mathcal K^\alpha_F  \wedge w \ge 0$. In particular, wedging $\mathcal K^\alpha_F$ with $w=\Vol_{L(0)}$ gives $c^\alpha_{J_0} \ge 0$, where $J_0=\{1,\dots,n\}$. We will show that $c^\alpha_J=0$ for all  $J\ne J_0$. 

Given $J$, consider two disjoint ordered subsets of $\{1,\dots, N\}$: 
$$\{i_1,\dots ,i_k\}= J \setminus J_0, \quad
\{j_1,\dots, j_k\}= J_0 \setminus J, 
$$ 
and the following primitive polyvector
$$w_J= \pm \wedge_{\ell=1}^k (e_{i_\ell} \pm C e_{j_\ell})\wedge e_{\{n+1,\dots, N\}\setminus J} \in \Lambda^{N-n}(\Z^N),
$$ 
where $C$ is an integer. 
Then, $\Vol_F \wedge w_J=1$ (with the right choice of the $\pm$ sign in front). On the other hand, 
$$\mathcal K^\alpha_F \wedge w_J = \pm C^k c^\alpha_J + \sum_{J' \ne J} \pm C^{<k} c^\alpha_{J'}. 
$$
By appropriate choice of the $\pm$ signs and large enough $C$ this can be made negative unless $c^\alpha_J=0$ 
(recall that all $c^\alpha_J$ are uniformly bounded). 
\end{proof} 

\begin{defn}
For $\alpha$ with $t_\alpha >T_F$, the number $w_\alpha(F):= \frac{\mathcal K_{F}^\alpha}{\Vol_F}\in \Z_{\ge 0}$ is called {\em the weight} of the facet $F$.
\end{defn}

When we write  $w_\alpha(F)$ we assume that $t_\alpha > T_F$. The weights $w_\alpha(F)$ have {\it a priori} bounds in terms of the degree of $X_\alpha$.
One also observes that $w_\alpha(F)$ does not depend on the choice of orientation of $F$ because the orientation affects signs of both $\mathcal K_{F}^\alpha$ and $\Vol_F$. If $Z'  \subset \T\PP^N$ is another attractor, and $F$ is a facet of both, then for large $\alpha$ the weights $w_\alpha(F)$, considering $F$ as a face of either $Z$ or $Z'$, are the same.

\begin{proposition}\label{WY}
Let $Z\subset\tp^N$ be an attractor for a sequence of $n$-dimensional varieties $X_\alpha\subset \C\PP^N$ of degree $d$. 

\begin{enumerate}
\item Let $F$ be an $n$-dimensional facet of $Z$ with weights $w_\alpha(F)$.
If for any $T \in \R$ there exists an index $\alpha$ with $t_\alpha >T$ and $w_\alpha(F)>0$, then all points of $F$ are accumulation points
of the sequence $X_\alpha$. 

\item Conversely, if $x\in Z$ is an accumulation point of the sequence of $X_\alpha$,
then for any $T \in \R$ there exists an index $\alpha$ with $t_\alpha >T$ and an $n$-facet $F$ of $Z$
such that $x\in F$ and $w_\alpha(F)>0$.
\end{enumerate} 
\end{proposition}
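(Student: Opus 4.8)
The plan is to exploit the link between the weights $w_\alpha(F)$ and intersection numbers of $X_\alpha$ with complex submanifolds $\LL_z$ living over crepes, together with the obvious fact that $x$ is an accumulation point of the amoebas $\am_\alpha$ if and only if every neighborhood of $x$ in $\T\PP^N$ meets $\am_\alpha$ for arbitrarily large $\alpha$. For part (1), suppose $w_\alpha(F) > 0$ for a cofinal set of indices $\alpha$, and let $x \in F^\circ$ (the relative interior case; the boundary/sedentary case follows by the splitting of Lemma \ref{lemma:cylinders} and a limiting argument inside the star of $x$). Pick a linear membrane $M^\epsilon_{L_I, x}$ through $x$, integrally transversal to $F$, with $\dd M^\epsilon_{L_I, x} \cap Z^\epsilon = \emptyset$. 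By Lemma \ref{lemma:LL}, the intersection number of $X_\alpha$ with any fiber $\LL_z$, $z \in \Log_{t_\alpha}^{-1}(M^\epsilon_{L_I, x})$, equals $\mathcal K^\alpha_F \wedge \Vol_{L_I}/\Vol_{\Z^{N-I}} = w_\alpha(F) \cdot (\Vol_F \wedge \Vol_{L_I})/\Vol_{\Z^{N-I}}$, which is a positive integer when $L_I$ is chosen with $\Vol_F \wedge \Vol_{L_I} = 1$. Hence for these $\alpha$ the set $X_\alpha \cap \LL_z$ is nonempty, so $\Log_{t_\alpha}(X_\alpha)$ meets $M^\epsilon_{L_I, x}$, i.e. $\am_\alpha$ comes within $O(\epsilon)$ of $x$. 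Letting $\epsilon \to 0$ and using that $Z^\epsilon$ can be shrunk (while keeping $\dd M^\epsilon$ off $Z^\epsilon$, since the membrane shrinks with $\epsilon$), we conclude $x$ is an accumulation point of $\am_\alpha$, hence of $X_\alpha$.

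For part (2), suppose $x \in Z$ is an accumulation point of $X_\alpha$, and fix $T \in \R$. We may assume $x \in \tp^\circ_I$; write $\Sigma(x)$ for the fan at $x$ as in Section \ref{sec:cellular}. Enclose $x$ in a small ball $B \subset \T\PP^N$ whose boundary sphere $S = \dd B$ misses $Z$ (possible since $Z$ is a polyhedral complex and we may take $B$ small enough that $Z \cap B$ is a cone on $Z \cap S$ over $x$). On $S$, the complex $Z$ cuts out finitely many points, one in the relative interior of each $n$-facet $F_1, \dots, F_m$ of $Z$ through $x$; pick disjoint small membrane-type $I$-balls $M_{F_j}$ around these points with $\dd M_{F_j} \cap Z^\epsilon = \emptyset$. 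Because $x$ is an accumulation point, for some $\alpha$ with $t_\alpha > T$ the amoeba $\am_\alpha$ meets $B$; since $\am_\alpha \subset Z^\epsilon$ and the $M_{F_j}$ together with a slightly pushed-in copy of $S$ bound a region whose $\Log^{-1}_{t_\alpha}$-preimage the variety $X_\alpha$ must traverse, a connectivity/homology argument (using that an irreducible complex curve section cannot escape $Z^\epsilon$ except through one of the collars over the $F_j$) shows $X_\alpha$ has nonzero intersection with $\LL_z$ for a fiber over some $M_{F_j}$. By Lemma \ref{lemma:LL} again this forces $w_\alpha(F_j) > 0$, and $x \in \overline{F_j^\circ} = F_j$ gives the claim.

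The main obstacle is the implication ``$\am_\alpha$ meets $B \Rightarrow$ some $w_\alpha(F_j) > 0$'' in part (2): it requires controlling \emph{where} the complex variety $X_\alpha$ exits the tube $Z^\epsilon$ near $x$. The clean way is a Mayer--Vietoris / linking argument: slice $X_\alpha$ by a generic complex-linear subspace of the right codimension so the slice is a (possibly reducible) complex curve $C_\alpha$; the portion of $C_\alpha$ over $B$ is a compact surface with boundary on $\Log^{-1}_{t_\alpha}(S)$, and its boundary class, pushed through $H_*$ of the complement, decomposes as a sum of the local linking classes $\mathcal K^\alpha_{F_j}$ read off by the membranes $M_{F_j}$; nonvanishing of the surface forces at least one summand to be nonzero, and by Lemma \ref{lem-weight} each $\mathcal K^\alpha_{F_j}$ is $w_\alpha(F_j)\Vol_{F_j}$ with $w_\alpha(F_j) \ge 0$, so one of the weights is strictly positive. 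One must also check the bookkeeping for faces $F_j$ of positive sedentarity, where the ambient torus degenerates; here Lemma \ref{lemma:cylinders} gives the product splitting $X^\epsilon_I = X_I \times \T^I_\epsilon$ that reduces the count to the mobile parent face, so no genuinely new difficulty arises — only notation.
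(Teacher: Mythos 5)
Your part (1) is essentially the paper's argument: $w_\alpha(F)>0$ gives $\mathcal K^\alpha_F\neq 0$, so every membrane through a point of $F^\circ$ must meet $\am_\alpha$, and shrinking the membrane makes every point of $F^\circ$ an accumulation point. Two small corrections there: boundary points of $F$ are covered simply because the set of accumulation points is closed (no limiting argument in the star is needed), and Lemma \ref{lemma:cylinders} is not available here anyway, since it concerns regular $\Q$-polyhedral complexes while the attractor $Z$ is only assumed to be a polyhedral complex.

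Part (2), however, has a genuine gap. Your setup is already inconsistent: a small sphere $S=\dd B$ around $x$ cannot miss $Z$ when $Z$ is positive-dimensional at $x$, and $Z\cap S$ is the $(n-1)$-dimensional link of $x$, not a finite set of points (unless $n=1$). Consequently you cannot account for all exits of $X_\alpha$ from $\Log_{t_\alpha}^{-1}(B)$ by membranes placed only over interior points of the facets $F_j$: the variety may leave $Z^\epsilon\cap \dd B$ near lower-dimensional faces of $Z$, where no membrane sits, and the assertion that the boundary class of a curve slice decomposes as $\sum_j \mathcal K^\alpha_{F_j}$ is precisely what needs proof. Moreover, slicing $X_\alpha$ down to a curve $C_\alpha$ scrambles the codimension bookkeeping: the classes $\mathcal K^\alpha_{F_j}$ are defined by intersecting the $n$-dimensional $X_\alpha$ with $(N-n)$-dimensional $I$-balls, not by boundaries of curve slices, so Lemma \ref{lem-weight} does not apply to your sliced objects as stated. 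The paper's proof sidesteps all of this: choose $z_\alpha\in X_\alpha$ with $x_\alpha=\Log_{t_\alpha}(z_\alpha)\to x$, take a small crepe $M_{L,x}$ through $x$ and its translates through $x_\alpha$; since $z_\alpha$ lies in both $X_\alpha$ and $\LL_{z_\alpha}$, positivity of complex intersections already gives $\mathcal K^\alpha_{M_{L,x}}\neq 0$; then a small perturbation of the crepe (legitimate by Lemma \ref{lemma:linking}) moves its intersections with $Z$ into the interiors of the facets $F_j$ adjacent to $x$, giving $\mathcal K^\alpha_{M_{L,x}}=\sum_j\mathcal K^\alpha_{F_j}$, so some $\mathcal K^\alpha_{F_j}\neq 0$ and hence $w_\alpha(F_j)>0$ by Lemma \ref{lem-weight}, with $x\in F_j$. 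Finally, points of positive sedentarity are treated in the paper not via Lemma \ref{lemma:cylinders} but by pruning $Z$ to the $n$-facets of nonzero weight and inducting over the strata indexed by $I$; some replacement for your appeal to that lemma is needed in any complete write-up.
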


\begin{proof}
(1) If $w_\alpha(F) >0$, then $\mathcal K^\alpha_F \neq 0$. 
This means that any membrane through a point in $F^\circ$ intersects $\am_\alpha$. 
Thus, any point of $F$ is an accumulation point of the sequence $X_\alpha$. 

(2) Let $x \in Z_I$ be an accumulation point. First, assume $x\in \R^N$. Passing to a subsequence, we pick $z_\alpha \in X_\alpha$ 
such that the sequence $x_\alpha = \Log_{t_\alpha} (z_\alpha)$ 
converges to $x$. 

Choose a small crepe $M_{L,x}$ passing through $x$ 
and consider $L$-crepes through $x_\alpha$ which are small translates of $M_{L,x}$. All these translates define classes $\mathcal K^\alpha_{M_{L,x}}$ (independent of $x_\alpha$).  By using positivity of intersection of complex varieties $X_\alpha$ and  $\LL_{z_\alpha}$ we see that all classes $\mathcal K^\alpha_{M_{L,x}}$ are non-zero. 

Perturbing $M_{L,x}$ we can move off all points of its intersection
with $Z$ into the interiors of facets $F_j$ adjacent to $x$,
so that we have 
$$\mathcal K^\alpha_{M_{L,x}} =\sum\limits_j \mathcal K^\alpha_{F_j},
$$
and thus at least one of the facets $F_j$ must have nonzero weight.

We modify $Z$ by removing all faces that are not contained in $n$-facets with non-zero weight and then 
proceed by induction on $I \subset \{0, \dots, N\}$ by applying the above argument to the modified attractor.
\end{proof}

\subsection{Tropical limit for hypersurfaces}\label{sec:trop_hypersurfaces}
Tropical polynomials are the analogs of the classical polynomials where the addition $x+y$ is replaced by $\max\{x,y\}$ and the multiplication $xy$ is replaced by the sum $x+y$.
For further analogies and details see, e.g \cite{Mik06}.
A useful observation in this ``tropical arithmetics'' is the following inequality:
\begin{equation}\label{trc-estimate}
\max_{j \in S} \{\ell_j\} \le \log_t \sum_{j \in S} t^{\ell_j} \le \max_{j \in S} \{\ell_j\} + \log_t |S|
\end{equation}
for any finite set $S$, any tropical numbers $\ell_j \in \T$ (where $j \in S$),  and any $t > 1$. 

Fix the dimension $N$, and let
$$ \Delta^\Z_d := \{m\in \Z^{N+1} \suchthat m_0+\dots+m_N=d \;\;\; \text{\rm and} \;\;\; m_i\ge 0\} 
$$
be the set of integral points of the $N$-dimensional simplex $\Delta_d$ of size $d$ in $\R^{N+1} \supset\Z^{N+1}$.
For any function $a: {\Delta^\Z_d} \to \T$, not identically equal to $-\infty$,
we define the {\it degree $d$ homogeneous tropical polynomial}
$P_a: \T^{N+1} \to \T$ as the Legendre transform of the function $-a$:
$$P_a:=\max_{m\in \Delta^\Z_d} \{mx+a(m)\}.
$$
Note that since all components of $m$ are non-negative,
the expressions $mx+a(m)$ do make sense for all $x\in  \T^{N+1}$. 

The restriction to any open stratum $\T^\circ_I \subset \T^{N+1}$ of a tropical polynomial $P_a$
is a convex  piecewise linear function $P_{a,I}$.
The set of points $x\in\T^{N+1} = \bigcup \T^\circ_I$ such that $P_{a, I}$ is not smooth at $x$ or equal to $-\infty$
(where $I$ is the refined sedentarity of $x$) 
is an $N$-dimensional polyhedral complex in $\T^{N+1}$ invariant under the diagonal translation by $\R$.
Thus, it descends to an $(N-1)$-dimensional polyhedral complex $V_a$ in $\T\PP^N$. 
The finite part $V^\circ_a \subset \R^N$ of this complex is dual to 
a subdivision of the Newton polytope $\Delta_a$
of $P_a$; this subdivision is given by the upper convex hull of the graph of $-a$. 

We assign the weights to the facets of $V_a$
as follows. 
On the facets of the closure of $V^\circ_a$ in $\T\PP^N$ 
the weights are given by the integral length of the gradient change of $P_a$, 
which is the same as the integral length of the dual edge in the subdivision (induced by $-a$) 
of the Newton polytope $\Delta_a$.
The weight on each boundary divisor $\T\PP_{\{i\}}$ is given by the integral distance 
from the Newton polytope $\Delta_a$
to the facet $m_i=0$ in the simplex $\Delta_d$. 
We discard from $V_a$ the boundary components of weight $0$. 

\begin{prop}
The hypersurface $V_a$ associated to a homogeneous tropical polynomial $P_a:\T^{N+1}\to\T$
is a weighted balanced polyhedral complex of dimension $N-1$ in $\tp^N$.
Conversely, any $(N-1)$-dimensional
weighted balanced polyhedral complex $Y\subset\tp^N$ may be presented
as $V_a$ for some homogeneous polynomial $P_a:\T^{N+1}\to\T$.
\end{prop}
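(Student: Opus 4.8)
The plan is to prove the two assertions separately. For the first (direct) assertion, I would verify the balancing condition facet-by-facet using the duality between $V_a^\circ$ and the regular subdivision of the Newton polytope $\Delta_a$ induced by $-a$. Fix a codimension-one face $\Delta$ of (the closure in $\T\PP^N$ of) $V_a$, lift everything to $\T^{N+1}$, and work chart by chart. On the finite part $V_a^\circ \subset \R^N$, the standard argument applies: a codimension-one face $\Delta$ is dual to a two-dimensional cell $Q$ of the subdivision of $\Delta_a$, the facets of $V_a$ adjacent to $\Delta$ correspond to the edges of $Q$, and the primitive outward normals of the edges of $Q$, weighted by their integral lengths, sum to zero because $\partial Q$ is a closed polygon; this is precisely the weighted balancing of the tangent vectors at $\Delta$. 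The only new point compared with the classical tropical hypersurface story is the behavior at the boundary divisors $\T\PP_{\{i\}}$: here one must check that the weight assigned to $\T\PP_{\{i\}}$ (the integral distance from $\Delta_a$ to the facet $m_i = 0$ of $\Delta_d$) together with the weights of the finite facets meeting $\T\PP_{\{i\}}$ satisfies the balancing condition for faces of the polyhedral complex $V_a$ that touch the stratum $\T_{\{i\}}$. This follows by the same polygon-closure argument applied to the subdivided polytope $\Delta_a$ sitting inside $\Delta_d$: the lattice length of the portion of $\partial\Delta_a$ lying on the face $m_i=0$, compared with the full simplex facet, accounts exactly for the divisorial weight, and Lemma~\ref{lemma:closure} and Lemma~\ref{lemma:cylinders} guarantee that the local picture near $\T_{\{i\}}$ is a product, so the balancing reduces to the finite case in one dimension lower plus a contribution from the asymptotic direction.

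For the converse assertion, the plan is constructive. Given an $(N-1)$-dimensional weighted balanced polyhedral complex $Y \subset \T\PP^N$, I would first treat its finite part $Y^\circ \subset \R^N$. Because $Y^\circ$ is a weighted balanced rational polyhedral complex of codimension one, it is dual to a weighted regular subdivision of some lattice polytope $Q \subset \R^{N+1}$ (the balancing condition is exactly what is needed for the Minkowski-weight/dual-subdivision correspondence to close up into an honest polytope); one recovers $Q$ by integrating the normals, the weights giving the lattice lengths of the edges of the subdivision. Choosing a convex piecewise-linear function $\nu$ on $Q$ inducing this subdivision and setting $a(m) = -\nu(m)$ for $m$ a lattice point of $Q$ (and $a(m) = -\infty$ otherwise, or a sufficiently negative value) produces a tropical polynomial $P_a$ whose finite corner locus is $Y^\circ$ with the correct weights. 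It then remains to translate/rescale $Q$ inside the dilated simplex $\Delta_d$ for $d$ large enough so that the integral distances from $Q$ to the facets $m_i = 0$ of $\Delta_d$ match the prescribed weights of the boundary divisors $\T\PP_{\{i\}}$ appearing in $Y$; since those boundary weights are encoded by how $\partial Q$ sits relative to the facets of $\Delta_d$, one has enough freedom (choosing $d$ and the position of $Q$) to realize any prescribed nonnegative integers, after which the discarding of weight-zero boundary components matches the construction of $V_a$.

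The main obstacle I expect is the converse direction, specifically the realizability of prescribed divisorial weights: one must show that the combinatorial data of $Y$ at infinity (the weights on the $\T\PP_{\{i\}}$) can be simultaneously matched by a single placement of a single polytope $Q$ inside a single simplex $\Delta_d$. This requires checking a compatibility condition among the boundary weights — essentially that they are consistent with $Y^\circ$ being the finite part, which is automatic from the balancing of $Y$ as a complex in $\T\PP^N$ — and then a lattice-geometry argument that the required translations and the choice of $d$ can be carried out. I would handle this by first normalizing so that $Q$ has one vertex at the origin, reading off the minimal $d$ as the sum $d = \sum_i (\text{weight of }\T\PP_{\{i\}}) + (\text{lattice width of }Q\text{ in each coordinate})$ appropriately interpreted, and verifying that with this $d$ the simplex $\Delta_d$ indeed contains a translate of $Q$ at the prescribed integral distances; the balancing hypothesis is what makes these distances mutually consistent. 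Everything else — the duality between balanced weighted rational complexes and regular subdivisions, the inequality \eqref{trc-estimate} controlling the passage between the tropical polynomial and its corner locus, and the local product structure near infinity from Lemma~\ref{lemma:cylinders} — is standard and can be invoked.
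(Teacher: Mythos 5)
Your plan follows essentially the same route as the paper: for the direct statement, the balancing of $V_a^\circ$ via duality with the regular subdivision of the Newton polytope induced by $-a$ (which the paper handles by citing Propositions 2.2 and 2.4 of \cite{Mik-pp} and Theorem 3.15 of \cite{Mik05}), and for the converse, recognizing $Y^\circ$ as the corner locus of a homogeneous tropical polynomial (the paper takes the cone over $Y^\circ$ and cites the same propositions) followed by a parallel shift of the support so that its integral distances to the coordinate hyperplanes realize the prescribed divisorial weights. Two caveats only: under Definition \ref{defn:polycomplex} balancing is imposed stratum by stratum, so the extra check you propose at the boundary divisors, mixing the weight of $\T\PP_{\{i\}}$ with the finite facets, is neither required nor well-defined (the direction transverse to the stratum is not part of its lattice), and in the converse there is no compatibility condition among the boundary weights to verify: translating the support independently in each coordinate and enlarging $d$ realizes any prescribed nonnegative integers, exactly as in the paper's shifting step.
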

\begin{proof}
The statement that $V_a$ is a polyhedral complex is clear from the discussion above.
Namely, the intersection of $V_a$ with any $\T\PP^\circ_{I}$ is of one of the following three types:  
\begin{enumerate}
\item the hypersurface defined by the function $a$ restricted to the $I$-th face of $\Delta^\Z_d$,
\item or empty, if $a$ has only one finite (not $-\infty$) value on the $I$-th face of  $\Delta^\Z_d$,
\item or the entire stratum $\T\PP^\circ_{I}$, if all values of $a$ are $-\infty$ on the $I$-th face of $\Delta^\Z_d$. 
\end{enumerate} 
The balancing condition needs to be checked only for $V_a^\circ$,
where it is a straightforward property of the Legendre transform
(cf., e.g., Propositions 2.2 and 2.4 of \cite{Mik-pp},
or Theorem 3.15 of \cite{Mik05}). 

Conversely, given a weighted balanced $(N-1)$-dimensional polyhedral complex
$Y\subset\tp^N$, consider the cylinder, invariant under the diagonal translations, over $Y^\circ:=Y\cap\R^N\subset\R^N$ in $\T^{N+1}$.
This is a balanced polyhedral complex of codimension $1$.
Hence, by Propositions 2.2 and 2.4 of \cite{Mik-pp}  quoted above,
it is given as the corner locus of a homogeneous tropical polynomial $P_{a^\circ}$ defined uniquely up to an affine linear function. 
The closure $\overline{Y^\circ}$ of $Y^\circ$ in $\T\PP^N$ is a polyhedral complex by Lemma \ref{lemma:closure}. 
Then, by parallel shifting of the support of $a^\circ$, it is easy to achieve that 
this support lies in the positive octant and its integral distances 
to the coordinate hyperplanes are the given weights 
of the corresponding boundary divisors $\T\PP_{\{i\}}$ in $Y$.
\end{proof} 

\begin{definition}\label{defn:trophyper_degree}
The {\em degree} of an $(N-1)$-dimensional weighted balanced polyhedral complex $Y\subset\tp^N$ 
is the degree of a tropical polynomial $P_a$ such that $V_a$ coincides with $Y$. 
\end{definition} 

An important observation is that if we add a constant to the function $a:\Delta^\Z_d \to \T$ the tropical polynomial $P_a$ changes
by adding (the same) constant. That is, both $V_a$ and the Newton polytope $\Delta_{a}$
remain  the same. Hence, the tropical hypersurface $V_a$ is well defined for $a\in \T\PP^{\Delta^\Z_d}$. 

Also given $P_a$, there is still some freedom in choosing the function $a:\Delta^\Z_d \to \T$
as long as the upper convex hull of the graph of $-a$ induces the same subdivision of the Newton polytope $\Delta_a$.
One can always take the ``minimal'' representative of $a$ by setting $a(m)=-\infty$
for all non vertices of the induced subdivision of $\Delta_a$.
This reduces the ambiguity in $P_{a}$ to an additive constant, cf. Theorem 3.15 of \cite{Mik05}.

Consider a scaled sequence of complex hypersurfaces $X_\alpha\subset\cp^N$ of degree $d$.
The main result of this subsection is that the sequence $X_\alpha$ has a coarsely convergent subsequence 
and its tropical limit $Y\subset\tp^N$ is a polyhedral complex. 

For an interior point $x\in \R^N \subset \T\PP^N$ we consider its preimage $T_x\in  \C^{N+1}\setminus \{0\}$ under the composition of the two maps:
$$ \C^{N+1}\setminus \{0\} \rightarrow \C\PP^N \rightarrow \T\PP^N.
$$
The first map is the quotient by $\C^*$, and the second is the ${\Log_{t_\alpha}}$.

\begin{definition}[\cite{FPT}]
For a hypersurface $X_\alpha\subset\cp^N$ let $f_\alpha$ be its defining polynomial.
Let $x\in \R^N \subset \T\PP^N$ be a point outside the amoeba $\am_\alpha$. We define the  functional $\ind_\alpha (x)$ on the space of loops in $T_x$ as follows. For a loop $\gamma \subset T_x$ we set
\begin{equation}
\ind_\alpha (x) : \gamma \mapsto \frac1{2\pi i} \int_\gamma d\log f_\alpha.
\end{equation}
\end{definition}

\begin{remark}
One may think of the value $\ind_\alpha(x)$ on the loop $\gamma$ as the linking number of $\gamma$ and the affine cone $\hat X _\alpha$ over $X_\alpha$ in $\C^{N+1}$. If $\mu\subset\cp^N$ is a holomorphic disc with boundary $\dd \mu= \gamma$,
then the value of $\ind_\alpha (x)$ on $\gamma$ is the intersection number of $\mu$ with $\hat X_\alpha$.
\end{remark}

There are a couple of immediate observations: $\ind_\alpha (x)$ depends only on the homology class of the loop, and it is a locally constant function of $x$. For convenience, we extend the map $\ind_\alpha$ to the boundary strata in $\T\PP^N$ by continuity.
Note that $T_x$ is isomorphic to $\C^* \times (S^1)^N$,
and for any $x\subset \R^N$ the cohomology $H^1(T_x; \Z)\cong \Z^{N+1}$ 
can be naturally identified with the lattice of the Laurent monomials in the coordinates on $\C^{N+1}$.
Thus, $\ind_\alpha$ maps connected components of $\T\PP^N \setminus \am_\alpha$ to this lattice. 

\begin{proposition}[\cite{FPT}]\label{prop:fpt-injective}
Let $X_\alpha\subset \C\PP^N$ be a degree $d$ hypersurface. The map $\ind_\alpha$ is injective on the components of $\T\PP^N \setminus \am_\alpha$ and its image lies in $\Delta^\Z_d \subset \Z^{N+1}$.
\end{proposition}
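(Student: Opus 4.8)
The plan is to analyze the functional $\ind_\alpha$ via its description as a linking number with the affine cone $\hat X_\alpha \subset \C^{N+1}$, following the ideas of \cite{FPT}. Write $f_\alpha = \sum_{m \in \Delta^\Z_d} c_m z^m$ for the defining polynomial, which we may take homogeneous of degree $d$. For a point $x$ outside the amoeba $\am_\alpha$, the torus $T_x \subset \C^{N+1}$ is the product of circles of radii $t_\alpha^{x_i}$ (with one $\C^*$-factor coming from the overall scaling), and $\ind_\alpha(x)$ evaluated on the $j$-th coordinate loop is $\frac{1}{2\pi i}\int_{\gamma_j} d\log f_\alpha$.

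\emph{Image lies in $\Delta^\Z_d$.} First I would show that for each coordinate direction $j$, the value $\ind_\alpha(x)(\gamma_j)$ is a non-negative integer that is at most $d$, so that the image vector lies in the box $[0,d]^{N+1}$; combined with the homogeneity relation $\sum_j \ind_\alpha(x)(\gamma_j) = d$ (which comes from the total degree, i.e.\ integrating $d\log f_\alpha$ around the diagonal loop and using that $f_\alpha$ is homogeneous of degree $d$), this forces the image into $\Delta^\Z_d$. Non-negativity follows from the argument-principle interpretation: $\ind_\alpha(x)(\gamma_j)$ counts (with multiplicity) the zeros of the one-variable restriction of $f_\alpha$ inside the circle of radius $t_\alpha^{x_j}$, hence is $\ge 0$; the bound by $d$ follows since $f_\alpha$ restricted to a generic complex line in the $j$-th coordinate has degree exactly $d$. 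One must be slightly careful to choose the line generically so that no zeros lie on the bounding circle, which is possible since $x \notin \am_\alpha$.

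\emph{Injectivity.} Next I would prove that $\ind_\alpha$ separates the connected components of $\T\PP^N \setminus \am_\alpha$. The key point, which is the heart of the \cite{FPT} argument, is that the value of $\ind_\alpha(x)$ on a coordinate loop can be computed by a \emph{local} contribution reading off which monomials of $f_\alpha$ dominate: on a component of the complement, the tropical polynomial $P_{a_\alpha}$ (with $a_\alpha(m) = \log_{t_\alpha}|c_m|$) is linear and its gradient is a single lattice point $m(x) \in \Delta^\Z_d$; I would show, using the estimate \eqref{trc-estimate}, that $\ind_\alpha(x) = m(x)$ for $\alpha$ large. Wait — for a \emph{fixed} $\alpha$ this is not literally the tropical gradient, so the cleaner route is direct: on a component where a single monomial $c_{m_0} z^{m_0}$ strictly dominates $f_\alpha$ on all of $T_x$, a deformation-retraction / Rouché argument identifies $\ind_\alpha(x)$ with the exponent vector $m_0$, and since distinct components of the complement correspond to distinct dominant behaviors one gets injectivity. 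For the general component (where no single monomial dominates everywhere on $T_x$) one still shows $\ind_\alpha(x)$ determines the component by decomposing $T_x$ and tracking winding numbers face-by-face, exactly as in \cite{FPT}; alternatively, connect $x$ to an adjacent component by a path crossing a single wall of $\am_\alpha$ and show the value jumps, so that two points in different components cannot share a value.

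\emph{Main obstacle.} I expect the injectivity statement to be the harder half: non-negativity and the degree bound are essentially one-variable argument-principle facts, whereas proving that $\ind_\alpha$ genuinely \emph{distinguishes} all components requires controlling the winding number of a genuine (non-tropical) Laurent polynomial over a product of circles where possibly no single monomial dominates, and tying that back to the combinatorics of the amoeba's complement. The cleanest way around this is to invoke the proof in \cite{FPT} directly — which establishes precisely that $\ind_\alpha$ is a bijection between the components of the complement and the lattice points of the induced subdivision of $\Delta_d$ — and simply quote it; otherwise one reproduces their Rouché-type argument comparing $f_\alpha$ with a dominant sub-sum on suitable sub-tori of $T_x$.
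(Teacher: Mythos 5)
The paper offers no proof of this proposition at all: it is imported verbatim from \cite{FPT}, so your closing suggestion to ``simply quote it'' is exactly what the authors do, and your first half (each coordinate winding is a non-negative integer by the argument principle applied to the one-variable restriction with the other coordinates fixed on their circles, and the windings sum to $d$ because the diagonal loop has winding $d$ by homogeneity) is a correct and standard way to see that the image lies in $\Delta^\Z_d$.

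If, however, your sketch is meant as a standalone argument, the injectivity half has a genuine gap. The claim that ``distinct components of the complement correspond to distinct dominant behaviors'' is not justified and is false as a mechanism: in general there are components of $\T\PP^N\setminus\am_\alpha$ on which no single monomial of $f_\alpha$ dominates on $T_x$ (the order map of \cite{FPT} is injective but typically far from being given by monomial domination, and it need not be surjective onto $\Delta^\Z_d$), so matching components to dominating monomials cannot separate them. Your fallback --- cross a single wall of $\am_\alpha$ and show the value jumps --- only distinguishes adjacent components, whereas injectivity is a global statement: two far-apart components could a priori share the same value. The missing global ingredient, which is the actual argument in \cite{FPT}, is the following: given points $x,x'$ in two distinct components, choose a rational direction $s$ approximating $x'-x$ and evaluate $\ind_\alpha(x')-\ind_\alpha(x)$ against $s$ by restricting $f_\alpha$ to the one-dimensional subtori in direction $s$; by the argument principle this pairing counts zeros of a one-variable holomorphic function in an annulus whose interior must meet the zero set, because the segment from $x$ to $x'$ necessarily crosses $\am_\alpha$. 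Hence $\langle \ind_\alpha(x')-\ind_\alpha(x),\,s\rangle>0$ and the two values differ. Either reproduce that segment/annulus argument or, as the paper does, cite \cite{FPT}; the domination heuristic by itself does not close the proof.
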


Here is the dictionary between the weights $w_\alpha(F)$ and the indices $\ind_\alpha$ for hypersurfaces.

\begin{proposition}\label{prop:index-weight}
Let $X_\alpha \subset \C\PP^N$ be a scaled sequence of hypersurfaces which coarsely converges to $Y\subset \T\PP^N$.

\begin{itemize}
\item If $F\subset Y$ is a facet separating two components $\sigma,\sigma'$ of $\T\PP^N \setminus Y$, 
then the weight $w_\alpha(F)$ is equal to the integral length of the vector $\ind_\alpha(\sigma) - \ind_\alpha(\sigma')$. 

\item If $F$ is a coordinate hyperplane $\T\PP_{\{j\}}\subset Y$, then $w_\alpha(F)$ is equal to $(\ind_\alpha)_j(\sigma)$, 
the $j$-th component  of $\ind_\alpha (\sigma)\in \Delta^\Z_d$, where $\sigma$ is an adjacent component of $\T\PP^N \setminus Y$.
\end{itemize}
\end{proposition}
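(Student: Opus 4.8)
The plan is to reduce the statement to the linking-number interpretation of $\ind_\alpha$ recorded in the Remark, and then to compare this quantity with the definition of $w_\alpha(F)$ via the class $\mathcal K^\alpha_F$. First I would treat the case of a finite facet $F \subset Y^\circ \subset \R^N$ separating two components $\sigma, \sigma'$ of $\tp^N \setminus Y$. By the balancing discussion for $V_a$ (the Legendre transform computation), the integral length of the gradient jump of $P_{a_\alpha}$ across $F$ equals the weight $w_\alpha(F)$; on the other hand, the map $\ind_\alpha$ on a component $\sigma$ returns the exponent vector of the monomial $m x + a_\alpha(m)$ achieving the maximum in $P_{a_\alpha}$ over that component (this is exactly Proposition \ref{prop:fpt-injective} of \cite{FPT}, together with the identification of $H^1(T_x;\Z)$ with the monomial lattice). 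The gradient of the linear function $mx$ is just $m$, so crossing $F$ from $\sigma$ to $\sigma'$ changes the selected monomial from $\ind_\alpha(\sigma)$ to $\ind_\alpha(\sigma')$, and the gradient change is $\ind_\alpha(\sigma) - \ind_\alpha(\sigma')$. Its integral length is therefore simultaneously $w_\alpha(F)$ and the integral length of the stated difference vector.

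To make the identification of $w_\alpha(F)$ with the gradient jump rigorous, I would work directly with the class $\mathcal K^\alpha_F$: take a linear membrane $M^\epsilon_{L_I,x}$ through $x \in F^\circ$, with $L = L_I$ a line transversal to $F$ and contained in a coordinate two-plane. The membrane $M$ connects a point of $\sigma$ to a point of $\sigma'$, and $\Log_{t_\alpha}^{-1}(M)$ is a holomorphic disc (or annulus) whose boundary is a torus fiber $T_x$-type cycle over the endpoints. The intersection number of this disc with the affine cone $\hat X_\alpha$ is, on one side, $\mathcal K^\alpha_F$ evaluated against $\Vol_L$ (Lemma \ref{lemma:LL}), hence equals $w_\alpha(F)$ times $\Vol_F \wedge \Vol_L = w_\alpha(F)$; on the other side it is, by the Remark's linking interpretation, the difference of the values of $\ind_\alpha$ paired with the corresponding loop classes, i.e. $\langle \ind_\alpha(\sigma) - \ind_\alpha(\sigma'), \ell_L \rangle$ where $\ell_L$ is the class dual to $L$. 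Running this over all transversal directions $L$ and invoking that primitive polyvectors $\Vol_{L(h)}$ span $\Lambda^{N-1}$ (as in the proof of Lemma \ref{lem-weight}) pins down $\ind_\alpha(\sigma) - \ind_\alpha(\sigma')$ as a vector of integral length $w_\alpha(F)$, parallel to the primitive normal of $F$.

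For the second bullet, $F = \tp_{\{j\}}$, I would use the explicit description of the weight on a boundary divisor: by the weight assignment rule for $V_a$, $w_\alpha(\tp_{\{j\}})$ is the integral distance from the Newton polytope $\Delta_{a_\alpha}$ to the facet $\{m_j = 0\}$ of the simplex $\Delta_d$. But for a component $\sigma$ of $\tp^N \setminus Y$ adjacent to $\tp_{\{j\}}$, pushing $x \in \sigma$ toward the $-\infty$-value of the $j$-th coordinate forces the monomial selected by $\ind_\alpha$ to be one minimizing $m_j$; by Proposition \ref{prop:fpt-injective} its image lies in $\Delta^\Z_d$, and the minimal value of $m_j$ over the support of $a_\alpha$ is exactly that integral distance. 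Hence $(\ind_\alpha)_j(\sigma) = w_\alpha(\tp_{\{j\}})$. Alternatively, and perhaps more robustly, I would use the sedentary version of the membrane construction from Section \ref{section_weights}: take an $I$-ball $M$ with $I = \{j\}$ linking $\tp_{\{j\}}$, so that $\Log_{t_\alpha}^{-1}(M)$ links the branch of $\hat X_\alpha$ over the coordinate hyperplane $\{z_j = 0\}$; the intersection number is both $\mathcal K^\alpha_F / \Vol_F = w_\alpha(F)$ and the $j$-th component of $\ind_\alpha(\sigma)$ read off as the order of vanishing of $f_\alpha$ along $z_j = 0$ restricted to the relevant chart.

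\textbf{Main obstacle.} The hardest part will be the bookkeeping in the sedentary/boundary case: making the linking-number interpretation of $\ind_\alpha$, which was defined on $\R^N$ and extended by continuity, match cleanly with the intersection-theoretic definition of $\mathcal K^\alpha_F$ in the strata at infinity, where $\Log_{t_\alpha}^{-1}$ of an $I$-ball meets $(\C^\times)^{N-I} \times \C^I$ rather than the torus. One must be careful that the relevant cycle $\LL_z$ still computes a linking number against $\hat X_\alpha$ and that the orientations/coorientations induced from $F$ are consistent with the sign conventions for $\ind_\alpha$; this is where most of the genuine work lies, the finite-facet case being essentially a repackaging of the classical Forsberg--Passare--Tsikh picture plus Lemma \ref{lemma:LL}.
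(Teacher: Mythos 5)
The rigorous core of your argument --- computing $w_\alpha(F)$ as a local intersection number via a linear membrane/crepe and Lemma \ref{lemma:LL}, and identifying that same intersection number with the pairing of $\ind_\alpha(\sigma)-\ind_\alpha(\sigma')$ against the transversal loop class (Cauchy/linking on the annulus in the one-parameter subgroup), respectively with $(\ind_\alpha)_j(\sigma)$ via a disk linking $\{z_j=0\}$ --- is exactly the paper's proof of both bullets, and your extra step of running over a spanning family of transversal directions even makes explicit the conormality of the index jump that the paper leaves implicit. However, you should drop, rather than ``make rigorous'', the first-pass arguments via a tropical polynomial $P_{a_\alpha}$ (gradient jump, Newton-polytope distance): at this stage the intersection-theoretic weight $w_\alpha(F)$ has not yet been related to any tropical polynomial of the limit --- that is the content of Propositions \ref{thm-hypreform} and \ref{prop:trop_conv}, which use the present proposition --- so those arguments are circular; also note that $(\ind_\alpha)_j(\sigma)$ is the full count of zeros of $f_\alpha$ in the small disk (including zeros with $z_j\neq 0$), not merely the order of vanishing of $f_\alpha$ along $z_j=0$.
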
 

\begin{proof}
Let $F\subset Y$ be a facet with $F^\circ\subset\R^N$
which separates two components $\sigma,\sigma'$ of $\T\PP^N \setminus Y$. 
Let $L \subset \R^N$ be a line which intersects $F$ integrally transversally  at a point $x \in F^\circ$, and
let $M_{F,L} \subset L$ be a small interval around $x$. 
Denote by $\hat F \subset \R^{N+1}$ the corresponding facet of the cylinder over $Y$. 
Let $\hat L$ be a line integrally transversal to $\hat F$ which projects to $L$, 
and let $\hat M_{F,L} \subset \hat L$ be the corresponding lift of the interval $M_{F,L}$. 

Then, by Lemma \ref{lemma:LL}, the weight $w_\alpha(F)$ can be computed 
as the local intersection number of $X_\alpha$ and the complex subgroup $\mathcal L \cong \C^* \subset (\C^*)^N$ 
corresponding to $L$. This equals the number of zeros of the defining polynomial $f_\alpha$ in the annulus $\hat{\mathcal L} \cap \Log_{t_\alpha}^{-1} (\hat M_{F,L}) \subset \hat {\mathcal L} \cong \C^*$, bounded by two loops $\gamma_1, \gamma_2$ in the homology class $\<\hat L\> \in \Z^{N+1}$. Thus, by Cauchy's formula this number is
$$ \frac1{2\pi i} \int_{\gamma_1 \cup (- \gamma_2)} d \log f_\alpha,
$$
which also can be computed as the value of  $\ind_\alpha (x_1)  - \ind_\alpha (x_2)$ on $\<\hat L\>$, where $x_1 = \Log_{t_\alpha}(\gamma_1), x _2 = \Log_{t_\alpha}(\gamma_2)$.
Since $\hat L$ intersects $\hat F$ integrally transversally, this is precisely the length of the vector $\ind_\alpha(\sigma) - \ind_\alpha(\sigma')$. 

For the coordinate hyperplane $F=\tp_{\{j\}}\subset\tp^N$,
let
$x=(x_0 : \ldots : x_N)$ be a point in a component of $\T\PP^N\setminus Y$ adjacent to it. 
Then, for $\alpha$ sufficiently large, the value of $(\ind_\alpha)_j(x)$ 
can be calculated by Cauchy's formula as the intersection number of the affine cone $\hat X_\alpha$ over $X_\alpha$ in $\C^{N+1}$ with the disk
$$\{(z_0,\dots,z_N)\in\C^{N+1}\ \suchthat z_i=t_\alpha^{x_i},\ i\ne j,\quad |z_j| < t_\alpha^{x_j}\}.
$$
On the other hand, by Lemma \ref{lemma:LL} the weight $w_\alpha(F)$ can be calculated as the intersection number of $X_\alpha$ with the disk
$$\{(z_0 : \ldots : z_N) \in\C\PP^N \suchthat z_i=t_\alpha^{x_i},\ i\ne j,\quad |z_j| < t_\alpha^{x_j}\}. 
$$ 
These two calculations clearly agree. 
\end{proof}

\begin{definition}\label{fine-conv-hyp}
We say that a scaled sequence of hypersurfaces
$X_\alpha \subset \C\PP^N$
{\em tropically converges} to a closed set $Y\subset \T\PP^N$
if the following conditions hold:
\begin{itemize}
\item
$Y$ is the coarse tropical limit of $X_\alpha$;
\item
for every
$x\in\tp^N\setminus Y$ the sequence $\ind_\alpha(x)\in\Z^{N+1}$ is eventually constant
(that is, constant for sufficiently large $\alpha$).
\end{itemize}
\end{definition}

By Proposition \ref{prop:fpt-injective} the degrees of $X_\alpha$ in a tropically convergent sequence finally stabilize. The second condition makes sense since a point $x\in \tp^N\setminus Y$ misses
all amoebas $\am_\alpha$ for sufficiently large $\alpha\in A$. In this case we define
$\ind(x)=\ind_\alpha(x)$,
where $\alpha$ is taken to be sufficiently large.
It is clear that the function $\ind: \T\PP^N\setminus Y\to\Z^{N+1}$ is locally constant. 

\begin{lemma}
Let $Y$ be the coarse tropical limit of a scaled sequence
of hypersurfaces $X_\alpha\subset\C\PP^N$ of degree $d$. Then, 
there is a scaled subsequence $A'\subset A$ such that $X_\alpha$, $\alpha\in A'$, tropically converges to $Y$.
\end{lemma}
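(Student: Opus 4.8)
The plan is to extract a subsequence along which the locally constant function $\ind_\alpha$ stabilizes on every component of $\T\PP^N\setminus Y$. First I would observe that the combinatorial data we need to control is genuinely finite: the complement $\T\PP^N\setminus Y$ has only finitely many connected components (since $Y$ is a polyhedral complex by the results of Section~\ref{sec:trop_hypersurfaces}), and for each such component $\sigma$ and each sufficiently large $\alpha$ the value $\ind_\alpha(\sigma)$ is a well-defined element of the \emph{finite} set $\Delta^\Z_d\subset\Z^{N+1}$ by Proposition~\ref{prop:fpt-injective}. So on each component we have, for large $\alpha$, a function with values in a finite set.

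The key step is a diagonal/pigeonhole argument. Fix a point $x_\sigma$ in each component $\sigma$ of $\T\PP^N\setminus Y$; since $Y$ is the coarse tropical limit of the $X_\alpha$, each $x_\sigma$ lies outside $\am_\alpha$ for all $\alpha$ with $t_\alpha$ large enough, say $t_\alpha>T_\sigma$. Enumerate the (finitely many) components $\sigma_1,\dots,\sigma_m$. Starting from $A_0:=\{\alpha\in A\suchthat t_\alpha>\max_k T_{\sigma_k}\}$ (still a scaled subsequence, as the scaling remains unbounded), I would successively pass to scaled subsequences $A_0\supset A_1\supset\dots\supset A_m$, where $A_j$ is chosen so that $\ind_\alpha(x_{\sigma_j})$ takes a single constant value for all $\alpha\in A_j$. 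This is possible at each stage because $\ind_\alpha(x_{\sigma_j})\in\Delta^\Z_d$, a finite set, so one of the finitely many fibers of $\alpha\mapsto\ind_\alpha(x_{\sigma_j})$ must itself be unbounded in scaling, hence a scaled subsequence. Set $A':=A_m$. Along $A'$ the function $\ind_\alpha$ is eventually constant on every component of $\T\PP^N\setminus Y$ (it is locally constant, so its value on $\sigma_j$ equals its value at $x_{\sigma_j}$), and $Y$ remains the coarse tropical limit of $X_\alpha$, $\alpha\in A'$, because passing to a subsequence cannot destroy Hausdorff convergence. By Definition~\ref{fine-conv-hyp} this says exactly that $X_\alpha$, $\alpha\in A'$, tropically converges to $Y$.

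The only point requiring a little care — and the nearest thing to an obstacle — is the finiteness of the number of components of $\T\PP^N\setminus Y$ together with the uniform threshold beyond which $\ind_\alpha$ is defined on each; both follow from $Y$ being a compact polyhedral complex in the compact space $\T\PP^N$, so its complement has finitely many components and each has nonempty interior meeting none of the $\am_\alpha$ for $\alpha$ large. With that in hand the argument is a routine iterated application of the pigeonhole principle to a finite-valued function, using only that a scaled sequence cannot be partitioned into finitely many pieces all of which are bounded in scaling.
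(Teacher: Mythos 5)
Your argument is correct and is essentially the paper's own proof: the paper also notes that the indices take values in the finite set $\Delta^\Z_d$ and passes to a subsequence making $\ind_\alpha$ constant on each connected component of $\T\PP^N\setminus Y$, which is exactly your iterated pigeonhole. You merely spell out the details (finitely many components, uniform threshold, preservation of coarse convergence) that the paper leaves implicit, so no further comparison is needed.
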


\begin{proof}
The set $\ind(\T\PP^N\setminus Y)\subset \Delta^\Z_d$ is finite.
To get a converging subsequence we
pass to a subsequence of constant $\ind_\alpha$ for each connected component of $\T\PP^N\setminus Y$.
\end{proof}

There is another natural way to pass to a tropical limit of the sequence of complex hypersurfaces $X_\alpha\subset\cp^N$ of degree $d$. The coefficients of the defining polynomials for $X_\alpha$ generate the sequence of points $c_\alpha \in \C\PP^{\Delta^\Z_d}$. Its coarse tropical limit $c\in \T \PP ^N$ (which always exists after passing to a subsequence, cf. Example \ref{example:points})
defines a tropical hypersurface $V_c\subset\tp^N$.

\begin{proposition}\label{thm-hypreform}
Let $X_\alpha\subset\cp^N$ be a scaled sequence of hypersurfaces of degree $d$.

If $c\in\tp^{\Delta^\Z_d}$ is the coarse tropical limit of the corresponding sequence of coefficients $c_\alpha \in \C\PP^{\Delta^\Z_d}$,
then the sequence $X_\alpha$ tropically converges to {\rm (}the support of{\rm )\/} $V_c\subset\tp^N$
in the sense of Definition \ref{fine-conv-hyp}.

Conversely, if $Y\subset\tp^N$ is the coarse tropical limit of the sequence  $X_\alpha$,
then $Y$ is the support of a tropical hypersurface $V_c$ of degree $d$, where $c\in \tp^{\Delta^\Z_d}$
is an accumulation point of the sequence $c_\alpha \in \C\PP^{\Delta^\Z_d}$.
\end{proposition}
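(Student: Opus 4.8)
The plan is to prove both directions by relating the index function $\ind_\alpha$ of the hypersurfaces $X_\alpha$ to the tropical polynomial whose coefficients are the logarithmic limits of the coefficients $c_\alpha$. The key observation is the tropical estimate \eqref{trc-estimate}: if $f_\alpha = \sum_{m\in\Delta^\Z_d} c_\alpha(m) z^m$ has coefficients $c_\alpha$, then on $\C^{N+1}\setminus\{0\}$ we have $\log_{t_\alpha}|f_\alpha(z)| \le \max_m\{\log_{t_\alpha}|c_\alpha(m)| + m x\} + \log_{t_\alpha}|\Delta^\Z_d|$ where $x = \Log_{t_\alpha}(|z|)$, and a matching lower bound holds provided the maximum on the right is achieved uniquely with a definite gap. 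Writing $a_\alpha(m) = \log_{t_\alpha}|c_\alpha(m)|$, the right-hand side is exactly the value $P_{a_\alpha}(x)$ of the homogeneous tropical polynomial associated to $a_\alpha$, and since $\log_{t_\alpha}|\Delta^\Z_d| \to 0$ as $t_\alpha\to\infty$, the amoeba $\am_\alpha$ is squeezed into a shrinking neighborhood of the corner locus of $P_{a_\alpha}$.

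For the first assertion, I would start from the hypothesis that $c_\alpha \to c$ coarsely in $\C\PP^{\Delta^\Z_d}$, so that after choosing a representative we have $a_\alpha(m) \to a(m) = \log|c(m)|\in\T$ (with the convention $\log 0 = -\infty$) for every $m$. Then $P_{a_\alpha} \to P_a$ uniformly on compacta of $\R^N$ (and on each boundary stratum $\T\PP^\circ_I$, using the minimal representative to handle the $-\infty$ values), hence the corner loci converge in the Hausdorff sense on $\T\PP^N$; by the squeezing above, $\am_\alpha$ converges to $V_c$, giving the coarse convergence. For the second, finer, condition in Definition \ref{fine-conv-hyp}: for a fixed point $x\in\T\PP^N\setminus V_c$ the maximum defining $P_a(x)$ is attained at a unique lattice point $m(x)$ with a strictly positive gap over the other terms, so for $\alpha$ large the same is true for $P_{a_\alpha}(x)$, and the term $c_\alpha(m(x))z^{m(x)}$ dominates $f_\alpha$ on the whole torus $T_x$. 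A standard Rouch\'e/argument-principle computation then shows that $\ind_\alpha(x)$ equals the exponent vector $m(x)\in\Delta^\Z_d$, which is independent of $\alpha$; hence $\ind_\alpha(x)$ is eventually constant. This also pins down the image of $\ind$ as the set of vertices of the induced subdivision, consistent with Proposition \ref{prop:fpt-injective}.

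For the converse, suppose $Y$ is the coarse tropical limit of $X_\alpha$. By Example \ref{example:points} applied in $\C\PP^{\Delta^\Z_d}$, after passing to a subsequence the coefficients $c_\alpha$ have a coarse limit $c\in\T\PP^{\Delta^\Z_d}$, and then the first part of the proposition already shows $X_\alpha$ tropically converges to $V_c$; since the coarse limit is unique (as noted after Definition \ref{ctl-tpn}), we get $Y = V_c$. One must then check that the degree of $V_c$ is $d$: this follows because $\ind$ takes values in $\Delta^\Z_d$ by Proposition \ref{prop:fpt-injective}, and the boundary weights of $V_c$ recorded by Proposition \ref{prop:index-weight} (together with the balancing of $V_c$) force the Newton polytope data to be that of degree $d$.

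The main obstacle I anticipate is the boundary behavior: the squeezing estimate \eqref{trc-estimate} is cleanest on $\R^N$, and extending the Hausdorff convergence of corner loci, as well as the identification of $\ind$, to the strata $\T\PP^\circ_I$ at infinity requires care with the minimal representative of $a$ (so that strata where all relevant coefficients vanish are handled), with Lemma \ref{lemma:closure} to guarantee the limit is genuinely polyhedral there, and with the continuity extension of $\ind_\alpha$ to the boundary. The torus-domination argument giving $\ind_\alpha(x) = m(x)$ is routine once the unique-maximizer gap is in hand, but making the gap uniform in $\alpha$ near the boundary is the delicate point.
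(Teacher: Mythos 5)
Your identification of the index function is fine and matches the paper: for $x\notin V_c$ the unique maximizer $m(x)$ of $mx+c(m)$ dominates, the estimate \eqref{trc-estimate} gives $|c_\alpha(m(x))z^{m(x)}|>\bigl|\sum_{m'\ne m(x)}c_\alpha(m')z^{m'}\bigr|$ on $\Log_{t_\alpha}^{-1}(x)$ for large $\alpha$, and hence $\ind_\alpha(x)=m(x)$ eventually; the converse via compactness of $\T\PP^{\Delta^\Z_d}$ and uniqueness of the coarse limit is also exactly the paper's argument. The gap is in the coarse-convergence half of the first assertion. Your squeezing argument only yields one of the two inclusions needed for Hausdorff convergence: it shows $\am_\alpha$ is eventually contained in any $\epsilon$-neighborhood of the corner locus (equivalently of $V_c$), i.e.\ $\sup_{x\in\am_\alpha}d(x,V_c)\to 0$. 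It does not show that every point of $V_c$ is approximated by amoeba points, i.e.\ $\sup_{y\in V_c}d(\am_\alpha,y)\to 0$; containment in a neighborhood of the corner locus is perfectly compatible, a priori, with the amoebas missing an entire facet of $V_c$. Interposing the claim that the tropical corner loci $V_{a_\alpha}$ converge in the Hausdorff sense to $V_c$ does not repair this (and is itself asserted without proof): even granted, it only controls where the amoebas may lie, not that they fill $V_c$.

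The missing step is precisely where the paper works, and you already have the ingredients to supply it: take an open set $U$ with $U\cap V_c\neq\emptyset$ and $U\cap\R^N$ convex. Since $V_c$ is a tropical hypersurface, $U$ meets at least two components of $\T\PP^N\setminus V_c$ on which the dominating monomials, hence the values of $\ind$, differ. Pick $y_1,y_2\in U\setminus V_c$ with $\ind(y_1)\neq\ind(y_2)$; by your identification, $\ind_\alpha(y_i)=\ind(y_i)$ for all large $\alpha$. Because $\ind_\alpha$ is locally constant on $\T\PP^N\setminus\am_\alpha$, the segment joining $y_1$ and $y_2$ (contained in $U$ by convexity) must intersect $\am_\alpha$ for all large $\alpha$; by compactness of $V_c$ this gives $\sup_{y\in V_c}d(\am_\alpha,y)\to 0$ and completes the coarse convergence. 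Without this (or an equivalent) argument, the proof of the first assertion is incomplete. The degree verification you worry about in the converse is not needed: $c\in\T\PP^{\Delta^\Z_d}$, so $V_c$ is by construction a tropical hypersurface of degree $d$ in the sense of Definition \ref{defn:trophyper_degree}.
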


\begin{proof}
Suppose that $x\in\tp^N\setminus V_c$. Then, there exists $m\in\Delta^\Z_d$
such that $mx+c(m) > m'x+c(m')$ for all $m'\ne m$. Then,
$| t_\alpha^{mx+c(m)}|>|\sum\limits_{m'\neq m} t_\alpha^{m'x+c(m')}|$ for sufficiently large $t_\alpha$, which, in turn, by \eqref{trc-estimate} means
\begin{equation}\label{c-est}
|c^{\alpha}_m z^m| > |\sum_{m'\neq m} c^{\alpha}_{m'}z^{m'}|
\end{equation}
 for all $z\in \Log_{t_\alpha}^{-1}(x)$.
This implies that $\ind_\alpha(x)=m$ for all sufficiently large $t_\alpha$ (cf. \cite{FPT}).

Furthermore, \eqref{c-est} holds simultaneously for all $x\in\tp^N$ outside
of an $\epsilon$-neighborhood of $V_c$ in $\tp^N$ by \eqref{trc-estimate} (with different values of $m$ for different components of $\T\PP^N \setminus V_c$).
Thus $\lim_{\alpha\in A} (\sup_{x\in\am_\alpha}d(x,V_c))=0.$

Now we want to show that $\lim_{\alpha \in A} (\sup_{y\in V_c}d(\am_\alpha,y))=0$.
By compactness of $V_c$,
it suffices to see that for each open set $U\subset\tp^N$
with $U\cap V_c\neq\emptyset$
there exists $t' >0$ such that $\am_{\alpha}\cap U\neq\emptyset$
for each $\alpha$ with $t_{\alpha} > t'$. Moreover, it is sufficient to prove this for
such $U$ that $U\cap\R^N$ is convex.
Since $V_c$ is a tropical hypersurface, such $U$ must intersect
at least two components of $\tp^N\setminus V_c$ with different values
of $\ind$. Let $y_1,y_2\in U$ be two points with $\ind(y_1)\neq\ind(y_2)$.
The interval connecting $y_1$ and $y_2$ must intersect $\am_\alpha$ for large $\alpha$.
Therefore, we conclude that $X_\alpha$ tropically converges to $V_c$.

Conversely, suppose that $Y\subset\tp^N$ is the coarse tropical limit of the sequence  $X_\alpha$.
Consider an accumulation point $c$ of the coefficient sequence $c_{\alpha}\in\tp^{\Delta^\Z_d}$
(which must exist by compactness of $\tp^{\Delta^\Z_d}$).
Let $A'$ be a scaled subsequence of $A$ such that
$\lim\limits_{\alpha\in A'}c_{\alpha}=c$.
We have already proved that $X_\alpha$ must converge to $V_c$ for the subsequence $A'$.
Hence $V_c=Y$.
\end{proof}

\begin{corollary}[Compactness theorem for hypersurfaces]\label{coro-hyp}
Let $X_\alpha\subset\cp^N$ be a scaled sequence of hypersurfaces of degree $d$.
Then, $X_\alpha$ has 
a subsequence which tropically converges to a tropical hypersurface $Y\subset \T\PP^N$ of degree $d$.
\end{corollary}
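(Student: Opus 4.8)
The plan is to deduce the statement directly from Proposition~\ref{thm-hypreform}, after one preliminary compactness argument on the coefficient side. Write $f_\alpha=\sum_{m\in\Delta^\Z_d}c^\alpha_m z^m$ for the nonzero degree $d$ homogeneous defining polynomial of $X_\alpha$, and let $c_\alpha=(c^\alpha_m)_{m\in\Delta^\Z_d}\in\C\PP^{\Delta^\Z_d}$ be the corresponding point in the projectivization of the space of degree $d$ forms. Since $\Delta^\Z_d$ is a finite set, $\T\PP^{\Delta^\Z_d}$ is compact, so by Example~\ref{example:points} (applied with $\C\PP^{\Delta^\Z_d}$ in place of $\C\PP^N$) there is a scaled subsequence $A'\subset A$ along which the amoebas of the points $c_\alpha$ converge in the Hausdorff sense to a single point $c\in\T\PP^{\Delta^\Z_d}$; that is, $c$ is the coarse tropical limit of the coefficient sequence $c_\alpha$, $\alpha\in A'$.

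Next I would simply invoke Proposition~\ref{thm-hypreform}: along the subsequence $A'$, the hypersurfaces $X_\alpha$ tropically converge, in the sense of Definition~\ref{fine-conv-hyp}, to the support of the tropical hypersurface $V_c\subset\T\PP^N$. In particular $V_c$ is the coarse tropical limit of $X_\alpha$, $\alpha\in A'$, and by the structural proposition preceding Definition~\ref{defn:trophyper_degree} it is a weighted balanced polyhedral complex of dimension $N-1$ in $\T\PP^N$. Finally, because $c$, viewed as a $\T$-valued function on $\Delta^\Z_d$, is not identically $-\infty$, the homogeneous tropical polynomial $P_c=\max_{m\in\Delta^\Z_d}\{mx+c(m)\}$ has degree $d$ and satisfies $V_{P_c}=V_c$; hence, by Definition~\ref{defn:trophyper_degree}, $V_c$ is a tropical hypersurface of degree $d$. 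Setting $Y:=V_c$ completes the proof.

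There is essentially no serious obstacle in this route, since all the analytic content is already packaged in Proposition~\ref{thm-hypreform}; the only points needing a moment's care are (i) the legitimacy of passing to the subsequence where the projectivized coefficients converge, which is exactly Example~\ref{example:points} together with compactness of $\T\PP^{\Delta^\Z_d}$, and (ii) confirming that the degree of the limit is again $d$ and not smaller, which is automatic because $V_c$ is by construction presented as $V_a$ for an $a$ supported on $\Delta^\Z_d$. If one instead wanted a proof that does not pass through the coefficient space, one could argue that by Proposition~\ref{prop:fpt-injective} the index map $\ind_\alpha$ takes values in the finite set $\Delta^\Z_d$, extract a coarsely convergent subsequence of $X_\alpha$ via a Hausdorff-compactness argument, and then make $\ind_\alpha$ eventually constant on each of the finitely many complementary components by a diagonal extraction; in that approach the hard part is establishing Hausdorff precompactness of the amoebas directly, and it is precisely this difficulty that the route through $\C\PP^{\Delta^\Z_d}$ sidesteps.
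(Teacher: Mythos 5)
Your proof is correct and follows the same route as the paper: the paper's own argument is precisely to pick an accumulation point $c$ of the coefficient sequence in the compact space $\T\PP^{\Delta^\Z_d}$, pass to a subsequence converging to it, and then apply Proposition~\ref{thm-hypreform} to conclude tropical convergence to the degree~$d$ tropical hypersurface $V_c$. Your extra remarks (degree of $V_c$, the alternative index-based route) are consistent with, but not needed beyond, that argument.
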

\begin{proof}
Choose an accumulation point of the coefficient sequence $c_{\alpha}\in\tp^{\Delta^\Z_d}$ and 
pass to a subsequence converging to this accumulation point.
By Proposition \ref{thm-hypreform} the tropical limit $Y$ of the hypersurfaces 
$X_\alpha$ is represented as a tropical hypersurface of degree $d$.
\end{proof}

\subsection{Compactness theorem}

In this subsection, we prove that a scaled sequence of $n$-dimensi\-onal varieties of universally bounded degrees in $\C\PP^N$
has a scaled subsequence which tropically converges to a balanced weighted $n$-dimensional polyhedral complex in $\tp^N$. 

\begin{lemma}\label{lem-Z}
Let $X_\alpha\subset\cp^N$ be a scaled sequence of $n$-dimensional varieties of degree $d$.
Then, $X_\alpha$ has a subsequence which possesses an attractor $Z\subset\tp^N$. 
\end{lemma}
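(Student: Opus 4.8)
The plan is to produce an attractor as a limit of amoebas, but controlled by the $(N-1)$-dimensional hypersurface case already established in Corollary \ref{coro-hyp}. The guiding idea is that every point in an accumulation set of amoebas of $X_\alpha$ is detected by linking with complex hyperplane sections, and hyperplane sections of $X_\alpha$ are hypersurfaces in the projective subspaces they span, for which we have a compactness theorem with uniform degree bound $d$. So first I would reduce to the top-dimensional case by slicing. Concretely, for a sufficiently generic codimension-$(n-1)$ linear subspace $\Pi \cong \C\PP^{N-n+1} \subset \C\PP^N$, the intersection $X_\alpha \cap \Pi$ is (for generic $\Pi$, by Bertini, after passing to a subsequence where the generic behavior is uniform) a hypersurface of degree $\le d$ in $\Pi$. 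Applying Corollary \ref{coro-hyp} inside $\T\PP^{N-n+1}$ and pulling back, each such slice has a tropical limit which is a weighted balanced $(N-n)$-dimensional polyhedral complex of degree $\le d$ in the corresponding tropical linear subspace of $\T\PP^N$.

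Next I would assemble a global polyhedral complex. The set of $c$-tropical hypersurfaces of degree $\le d$ in $\T\PP^{N-n+1}$ that arise this way, as $\Pi$ ranges over a suitable finite collection of coordinate-type subspaces (enough to cut out any point of $\T\PP^N$ transversally, analogous to the finite family of subspaces $L(h)$ used in Lemma \ref{lem-weight}), is finite in combinatorial type because the degrees are bounded by $d$; there are only finitely many polytopes $\Delta_a \subset \Delta_d$ and finitely many regular subdivisions of each. Take $Z$ to be (a common polyhedral refinement of) the union over this finite family of the preimages under the coordinate projections $\T\PP^N \to \T\PP_J$ of these limiting tropical hypersurfaces, intersected with the relevant coordinate tropical subspaces, and discard all faces not needed. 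By Lemma \ref{lemma:closure} and Definition \ref{defn:polycomplex} this $Z$ is a polyhedral complex in $\T\PP^N$ of dimension $\le n$. After passing to a further subsequence of $A$ (finitely many passes, one per subspace in the family) I may assume all the relevant slice-limits have stabilized.

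Finally I would check $Z$ is an attractor, i.e. that it contains every accumulation point of $\am_\alpha$. Suppose $x \in \T\PP^N$ is an accumulation point but $x \notin Z$; after passing to a subsequence pick $z_\alpha \in X_\alpha$ with $\Log_{t_\alpha}(z_\alpha) \to x$. Choose a subspace $\Pi$ from the finite family whose tropicalization passes through $x$ transversally to all faces of $Z$ near $x$ (this is where having enough subspaces, as in Lemma \ref{lem-weight}, is used). Then by a small perturbation $z_\alpha$ can be pushed onto $\Pi$ (the relevant local linking/intersection numbers are nonzero by positivity of intersection of complex varieties, exactly as in the proof of Proposition \ref{WY}(2)), so $x$ is an accumulation point of the amoebas of $X_\alpha \cap \Pi$, hence lies in the tropical-hypersurface limit of those slices, which by construction lies in $Z$ — a contradiction. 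The main obstacle I expect is the transversality/genericity bookkeeping in the slicing step: ensuring one can choose a \emph{finite} family of linear subspaces, of coordinate type so that the projections and the machinery of Section \ref{section_weights} apply, which simultaneously (i) meet $X_\alpha$ in honest hypersurfaces of degree $\le d$ for all large $\alpha$ and (ii) suffice to detect every point of $\T\PP^N$; handling the boundary strata $\T\PP^\circ_I$ uniformly, rather than just the open torus, is the delicate part and is where I would spend most of the effort.
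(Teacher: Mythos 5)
Your reduction to the hypersurface case is dimensionally wrong, and this is the crux of the argument. If $\dim X_\alpha = n$ and $\Pi\subset\C\PP^N$ is a generic linear subspace of codimension $n-1$ (so $\dim\Pi=N-n+1$), then $X_\alpha\cap\Pi$ has dimension $n+\dim\Pi-N=1$, not $N-n$; it is a hypersurface in $\Pi$ only when $N=n+1$, i.e.\ only when $X_\alpha$ was already a hypersurface. No linear slicing of a non-hypersurface produces a hypersurface inside the slice, so Corollary \ref{coro-hyp} never applies to the objects you feed into it, and everything built on those ``slice limits'' collapses. The reduction that works (and is the paper's) goes in the opposite direction: for coordinate subsets $I$ one takes the cones $\pi_I^{-1}(\pi_I(X_\alpha))$ over the coordinate projections of $X_\alpha$; whenever $\pi_I(X_\alpha)$ is a hypersurface in $\C\PP_I$, this cone is a hypersurface in $\C\PP^N$ of degree bounded by $d$, Corollary \ref{coro-hyp} gives (after passing to a subsequence) tropical limits $Z_I$, and one sets $Z=\bigcap_I Z_I$. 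With that choice the attractor property is automatic: $X_\alpha\subset\pi_I^{-1}(\pi_I(X_\alpha))$ forces $\am_\alpha$ to lie in the amoeba of the cone, so every accumulation point lies in every $Z_I$, and no linking or perturbation argument is needed. The only remaining point is that $\bigcap_I Z_I$ is a polyhedral complex of dimension at most $n$, which holds because a face of dimension greater than $n$ would project onto a full-dimensional subset of some $\T\PP_I$, contradicting that it lies in the hypersurface $Z_I$.

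Even granting the slicing step, the remainder of your argument has independent gaps. First, you assemble $Z$ as a \emph{union} of preimages of $(N-n)$-dimensional limits under coordinate projections; such a preimage generically has dimension $N-1$, not at most $n$, and ``discard all faces not needed'' is not a construction -- the dimension bound in the paper comes precisely from taking an \emph{intersection} of the hypersurfaces $Z_I$. Second, your attractor check requires, for an arbitrary (a priori unknown) accumulation point $x$, a member of a \emph{finite} family of subspaces whose tropicalization passes through $x$; finitely many proper tropical subspaces cannot meet every point of $\T\PP^N$, so this selection is impossible. Third, ``pushing $z_\alpha$ onto $\Pi$'' would need a local non-emptiness statement for an intersection of positive expected dimension inside $\Log_{t_\alpha}^{-1}(U)$, which the positivity-of-intersection machinery of Section \ref{section_weights} does not provide: that machinery is built on zero-dimensional intersections of $X_\alpha$ with the $(N-n)$-dimensional complex slices $\LL_z$, and homological nonvanishing of such classes says nothing about where a one-dimensional intersection $X_\alpha\cap\Pi$ sits. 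The paper's projection construction sidesteps all three issues at once.
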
 

\begin{proof}
We realize $Z$ as the intersection of several tropical hypersurfaces.
In the line of proof we may need to pass to subsequences. 

Let $\cp_I\subset\cp^N$ be a coordinate projective subspace,
and let $\pi_I:\cp^N\to\cp_{I}$ be the projection from the dual coordinate projective subspace
$\cp_{\hat I}$ (where ${\hat I} = \{0, \ldots N\} \setminus I$). 
It is a rational map not defined at $\cp_{\hat I}$. For a closed subset $X\subset \C\PP^N$ we denote by $\pi_I (X)$ its image, that is the closure of  $\pi_I (X\setminus \cp_{\hat I})$ in $\cp_I$. Similarly, we can consider $\pi^\T_I:\tp^N\to\tp_{I}$, the tropical version of the projection.

Define $\mathcal I$ to be the set of subsets $I\subset \{0,\dots, N\}$ such that
$\pi_I(X_\alpha)\subset\cp_{I}$ are hypersurfaces for all large $\alpha$.
Generically $\mathcal I$ consists of the subsets with $N-n-1$ elements. 
However, it may happen that for some $I$ with  $|I|=N-n-1$ the images $\pi_I(X_\alpha)\subset\cp_{I}$ 
are of codimension higher than 1 for all large $\alpha$. 
Then, we will need to pass to a further projection to $\cp_{I'}$ with $I'\supset I$
so that $\pi_{I'}(X_\alpha)$ are hypersurfaces in $\cp_{I'}$ (after passing to a subsequence) for all large $\alpha$. 

For each $I\in \mathcal I$, the degrees of $\pi_I(X_\alpha)\subset\cp_{I}$ 
are bounded by $d=\deg (X_\alpha)$, 
and hence so are the degrees of the cones $\pi_I^{-1}(\pi_I(X_\alpha))$. 
Thus, after passing to a subsequence we may assume that for all $I\in \mathcal I$ 
the degrees $d_I$ of the hypersurfaces $\pi_I^{-1}(\pi_I(X_\alpha))$ are fixed. 
For each $I\in \mathcal I$, by Corollary \ref{coro-hyp}, there exists a tropical limit $Z_I$ 
of $\pi_I^{-1}(\pi_I(X_\alpha))$ 
which is a tropical hypersurface 
of degree $d_I$. 

Now observe that $X_\alpha \subset \pi_I^{-1}(\pi_I(X_\alpha))$, hence
$$ \Log_{t_\alpha} (X_\alpha) \subset  \Log_{t_\alpha} (\pi_I^{-1}(\pi_I(X_\alpha))= (\pi_I^\T)^{-1} ( \Log_{t_\alpha} (\pi_I(X_\alpha)).
$$
This means that $Y$ is contained in any $Z_I$ for $I\in \mathcal I$. We define
$$Z:= \bigcap_{I\in \mathcal I} Z_I.
$$
Then $Z\supset Y$, and  we claim that $Z$ is the union of polyhedral complexes in $\T\PP^N$ 
of dimension not greater than $n$. For this note that each hypersurface $Z_I$ is a polyhedral complex. 
The intersection of the hypersurfaces restricted to some stratum $\tp_{I'}^\circ$
is given by the intersection of the restrictions of those hypersurfaces to $\tp_{I'}^\circ$. 
That shows the face incidence property 
of Definition \ref{defn:polycomplex}. 

Finally we show that $Z$ cannot contain faces of dimension greater than $n$.
Indeed, suppose $F$ is such a face. Then, there is a subset  $I' \subset \{0,\dots, N\}$ of cardinality $N-n-1$ such that $\pi_{I'}^\T (F)$ is full dimensional. Such $I'$ is a subset of some $I \in \mathcal I$ and the projection $\pi_{I}^\T (F)$ is still full dimensional. This contradicts that $F$ is inside the hypersurface $Z_I$.
\end{proof}

\begin{defn}\label{fine-conv}
A scaled sequence of $n$-dimensional varieties $X_\alpha\subset\cp^N$ {\em tropically converges}
to a weighted $n$-dimensional polyhedral complex $Y\subset\tp^N$
if the two following conditions hold:
\begin{itemize}
\item
$Y\subset\tp^N$ is a coarse tropical limit of $X_\alpha$;
\item
for any facet $F\subset Y$ its weights $w_\alpha(F)$ are equal to $w(F)$
for sufficiently large $\alpha$.
\end{itemize}
\end{defn}

\begin{prop}\label{prop:trop_conv}
A sequence of hypersurfaces $X_\alpha \subset \C\PP^N$ tropically converges to $Y\subset \T\PP^N$
in the sense of Definition \ref{fine-conv-hyp} if and only if it does
in the sense of Definition \ref{fine-conv}.
\end{prop}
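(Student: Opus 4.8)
The plan is as follows. Both definitions require $Y$ to be the coarse tropical limit of $X_\alpha$, so I would only need to compare the two remaining conditions: eventual constancy of the index $\ind_\alpha$ on every connected component of $\T\PP^N\setminus Y$, versus eventual stabilization of the facet weights $w_\alpha(F)$ to the prescribed values $w(F)$. I would first record that, by Proposition \ref{thm-hypreform}, the coarse limit $Y$ is (the support of) a tropical hypersurface $V_c$, so it carries a canonical polyhedral structure, $\T\PP^N\setminus Y$ has finitely many components, and on each of them $\ind_\alpha$ is well defined and constant for all sufficiently large $\alpha$. The bridge between the two conditions is Proposition \ref{prop:index-weight}, which expresses each $w_\alpha(F)$ through the values of $\ind_\alpha$ on the component(s) adjacent to $F$; since $Y$ is a hypersurface its facets are exactly of the two types appearing there (those with relative interior in $\R^N$, separating two components, and the boundary divisors $\T\PP_{\{j\}}\subset Y$), so this accounts for all of them.

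For the implication from Definition \ref{fine-conv-hyp} to Definition \ref{fine-conv} I would argue directly. If $\ind_\alpha$ is eventually constant on each component, then by Proposition \ref{prop:index-weight} every $w_\alpha(F)$ is eventually constant, and I would set $w(F)$ to be this eventual value. It then remains to check $w(F)>0$: a generic point $x$ of the relative interior $F^\circ$ lies in no facet of $Y$ other than $F$ and, being an accumulation point of $X_\alpha$ since $Y$ is the coarse limit, it lies by part (2) of Proposition \ref{WY} in a facet of positive weight for arbitrarily large $\alpha$; so $w_\alpha(F)>0$ for infinitely many $\alpha$, which together with eventual constancy forces $w(F)>0$. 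Thus $X_\alpha$ tropically converges to $(Y,w)$ in the sense of Definition \ref{fine-conv}.

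For the converse I would argue by contradiction. Since $\ind_\alpha$ is constant on each of the finitely many components and takes values in the finite set $\Delta^\Z_d$, a failure of eventual constancy would yield two scaled subsequences along which $\ind_\alpha$ equals two distinct functions $\ind^{(1)}\neq\ind^{(2)}$ on the set of components. Along the $i$-th of these I would pass to a further subsequence on which the coefficient points $c_\alpha\in\T\PP^{\Delta^\Z_d}$ converge to some $c^{(i)}$; Proposition \ref{thm-hypreform} then gives $V_{c^{(i)}}=Y$, and its proof identifies $\ind^{(i)}(\sigma)$ with the vertex, dual to the component $\sigma$, of the subdivision $S^{(i)}$ of the Newton polytope $\Delta_{c^{(i)}}$. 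The two Newton polytopes $\Delta_{c^{(i)}}$ coincide (they are determined by $V_{c^{(i)}}=Y$ up to translation, and the translation is pinned inside $\Delta_d$ by the integral distances to the facets $m_j=0$, which are the stabilized boundary weights $w(\T\PP_{\{j\}})$); and the subdivisions $S^{(1)},S^{(2)}$ then have the same combinatorial type, the one dual to $Y$, their dual edges point in the directions prescribed by the facets of $Y$, and those edges have integral lengths equal to the stabilized interior weights $w(F)=\lim_\alpha w_\alpha(F)$ by Proposition \ref{prop:index-weight}. Hence $S^{(1)}=S^{(2)}$, so $\ind^{(1)}=\ind^{(2)}$, a contradiction; therefore $\ind_\alpha$ is eventually constant on every component, which is precisely the second condition of Definition \ref{fine-conv-hyp}.

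I expect the hard part to be this last rigidity step: Proposition \ref{prop:index-weight} by itself recovers only the integral lengths of the index increments across facets, not the index function, so one must show that the weighted complex $(Y,w)$ already determines all of $\ind$. This is the uniqueness half of the correspondence between weighted balanced $(N-1)$-dimensional polyhedral complexes in $\T\PP^N$ and homogeneous tropical polynomials (the proposition preceding Definition \ref{defn:trophyper_degree}, together with the discussion of the minimal representative following it): $(Y,w)$ determines $P_c$ up to an additive constant — here it is essential that the weights of the boundary divisors fix the position of the Newton polytope within $\Delta_d$ — and hence determines the induced subdivision of $\Delta_d$ and, by duality, the index function $\sigma\mapsto\ind(\sigma)$.
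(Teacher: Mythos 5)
Your proposal is correct, and its forward direction (index constancy implies weight stabilization, with positivity extracted from Proposition \ref{WY}) is exactly the content the paper relies on. Where you diverge is the converse: the paper's own proof is essentially one line --- both definitions require the coarse limit, $Y$ is an $(N-1)$-dimensional polyhedral complex, and ``the statement follows from Proposition \ref{prop:index-weight}'' --- so the authors treat the recovery of the index function from the stabilized weights as immediate from that dictionary (the differences of $\ind_\alpha$ across interior facets and its values at the boundary divisors pin it down, the adjacency graph of complement components being connected). You instead prove this rigidity by a compactness-plus-uniqueness argument: extract two subsequences with distinct limiting index functions, pass to coefficientwise limits $c^{(1)},c^{(2)}$, invoke Proposition \ref{thm-hypreform} to identify each limiting index with the dual-vertex function of $V_{c^{(i)}}$, and then use the uniqueness half of the correspondence between weighted balanced codimension-one complexes and homogeneous tropical polynomials (the proposition preceding Definition \ref{defn:trophyper_degree}, resting on the cited result of Mikhalkin, with the boundary weights fixing the translate of the Newton polytope inside $\Delta_d$) to conclude $\ind^{(1)}=\ind^{(2)}$. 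This is a genuine argument the paper omits, it uses only tools already established in the paper, and it has the advantage of sidestepping the question of the direction and sign of the index increment across a facet at finite $\alpha$, which a direct ``propagate the differences'' reading of Proposition \ref{prop:index-weight} would have to address; the cost is the detour through coefficient limits and the polynomial-presentation uniqueness, which the paper's terser intended argument avoids. Your positivity check of the limit weights is harmless extra precision about what Definition \ref{fine-conv} requires of the prescribed $w(F)$.
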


\begin{proof}
Let $Y\subset\tp^N$ be the coarse tropical limit of
hypersurfaces $X_\alpha$ (required by both Definitions \ref{fine-conv-hyp} and Definition \ref{fine-conv}). 
By Proposition \ref{WY}, it is an $(N-1)$-dimensional
polyhedral complex in $\tp^N$. 
The statement follows from Proposition \ref{prop:index-weight}.
\end{proof} 

\begin{theorem}[Compactness]\label{thm-compactness}
Suppose that $X_\alpha\subset\cp^N$, $\alpha\in A$,
is a scaled sequence of $n$-dimensional varieties
of universally bounded degrees.
Then, there exists a scaled subsequence $A'\subset A$ such that $X_\alpha$, $\alpha\in A'$,
tropically converges to a {\rm(}non-empty{\rm)} balanced weighted $n$-dimensional polyhedral complex in $\tp^N$. 
\end{theorem}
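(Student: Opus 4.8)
The plan is to assemble the attractor of Lemma~\ref{lem-Z}, the weight machinery of Section~\ref{section_weights}, and compactness of the Hausdorff metric, and then to establish the balancing condition by a local reduction to complex curves in toric varieties. Since the degrees $\deg(X_\alpha)$ are uniformly bounded positive integers, after passing to a scaled subsequence we may assume $\deg(X_\alpha)=d$ is constant. The amoebas $\am_\alpha$ are non-empty compact subsets of the compact space $\tp^N$, and the space of closed subsets of $\tp^N$ equipped with the Hausdorff metric is compact; hence, after a further scaled subsequence, the $\am_\alpha$ converge in the Hausdorff sense to a non-empty closed set $Y\subset\tp^N$, which is by definition the coarse tropical limit of the subsequence. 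A Hausdorff limit is closed and contains every accumulation point of the $\am_\alpha$, so the set of all accumulation points of $X_\alpha$ equals $Y$. By Lemma~\ref{lem-Z}, after yet another scaled subsequence there is an attractor $Z\subset\tp^N$, i.e. a polyhedral complex with $\dim Z\le n$ containing all these accumulation points; in particular $Y\subseteq Z$. The complex $Z$ has finitely many facets, and for each facet $F$ the weight $w_\alpha(F)\in\Z_{\ge 0}$ is defined for $t_\alpha>T_F$ (Lemma~\ref{lem-weight}) and is bounded independently of $\alpha$. Extracting a scaled subsequence once per facet, we may assume that $w_\alpha(F)$ equals a fixed integer $w(F)$ for all large $\alpha$ and every facet $F$ of $Z$.

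Next I identify the limit. Let $Y'$ be the subcomplex of $Z$ formed by the faces lying in the closure of some $n$-facet $F$ with $w(F)>0$, weighted by the $w(F)$. By part~(1) of Proposition~\ref{WY} every point of such an $F$ is an accumulation point of $X_\alpha$, hence lies in $Y$, so $Y'\subseteq Y$; conversely, by part~(2) of Proposition~\ref{WY} every accumulation point of $X_\alpha$ lies in some $n$-facet of positive weight, so $Y\subseteq Y'$. Thus $Y=Y'$ is a non-empty, pure $n$-dimensional weighted polyhedral complex in $\tp^N$ in the sense of Definition~\ref{defn:polycomplex}. Being a polyhedral complex of dimension $\le n$ that contains all accumulation points of $X_\alpha$, $Y$ is itself an attractor, so the weight of any of its facets agrees with the weight of that facet computed in $Z$; together with the stabilization above this gives $w_\alpha(F)=w(F)$ for all large $\alpha$, i.e. $X_\alpha$ tropically converges to $Y$ in the sense of Definition~\ref{fine-conv}.

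It remains to verify that $Y$ is balanced at every codimension-$1$ face $G$. Fix such a $G$, a point $x$ in its relative interior, and the $n$-facets $F_1,\dots,F_r$ of $Y$ containing $G$; let $u_1,\dots,u_r$ be the corresponding primitive integer outward normal vectors in the lattice transverse to $G$. By definition $w_\alpha(F_i)=\mathcal K^\alpha_{F_i}/\Vol_{F_i}$, and by Lemma~\ref{lemma:LL} together with the linear-membrane description of the weights this equals the local intersection number of $X_\alpha$ with a complex one-dimensional slice in a transverse direction. Choosing these transverse directions to span a single complex-linear subspace $\hat{\mathcal L}$ whose $\Log_{t_\alpha}$-image is a real affine subspace of complementary dimension meeting $G$ transversally near $x$, the slice $C_\alpha:=X_\alpha\cap\hat{\mathcal L}$ is a complex algebraic curve whose local intersection number with the toric boundary divisor $D_{u_i}$ (in a toric compactification of the ambient algebraic torus, which is a product of $\C^\times$-factors, together with $\C$-factors if $G$ is sedentary) equals $w_\alpha(F_i)=w(F_i)$. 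Since the closure $\overline{C_\alpha}$ is a complete curve, the identity $\sum_\rho(\overline{C_\alpha}\cdot D_\rho)\,u_\rho=0$ holds---it follows from $\sum_\rho\langle m,u_\rho\rangle D_\rho=\operatorname{div}(\chi^m)\sim 0$ for every character $m$---and it specializes precisely to $\sum_{i=1}^r w(F_i)\,u_i=0$ in the lattice transverse to $G$, which is the balancing relation at $G$. For sedentary $G$ one first invokes the product splitting $Y_I^\epsilon=Y_I\times\T^I_\epsilon$ of Lemma~\ref{lemma:cylinders} to reduce to the mobile situation inside $\T_I$. This proves the balancing condition and completes the proof.

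The first two paragraphs are a bookkeeping assembly of results already in hand; the genuine difficulty is the balancing step. The delicate points there are to arrange the complex slice $C_\alpha$ so that its asymptotic behaviour in the toric compactification records exactly the weights $w(F_i)$, and nothing spurious, uniformly for all large $\alpha$; and to treat faces of positive sedentarity, where the relevant ambient space is a product $(\C^\times)^k\times\C^{|I|}$ rather than a full algebraic torus, so one must check that the slice curve acquires no unwanted intersections along the $\C$-factors before applying the toric identity.
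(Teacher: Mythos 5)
Your first two paragraphs (subsequence with constant degree, Hausdorff compactness, the attractor $Z$ from Lemma \ref{lem-Z}, stabilization of the bounded weights, and the identification of the limit $Y$ with the union of positive-weight $n$-facets via Proposition \ref{WY}) follow essentially the same route as the paper and are fine. The genuine gap is in your balancing step. The identity $\sum_\rho(\overline{C_\alpha}\cdot D_\rho)\,u_\rho=0$ is a \emph{global} statement about the complete curve $\overline{C_\alpha}$: the intersection numbers $\overline{C_\alpha}\cdot D_\rho$ count all points of the closure of the slice curve on each boundary divisor, wherever they occur, and the slice $\hat{\mathcal L}$ is a global affine subtorus whose intersection with $X_\alpha$ has ends in many directions far away from the chosen point $x\in G^\circ$. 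Nothing in your argument localizes this relation to the star of $G$, so it does not ``specialize'' to $\sum_i w(F_i)u_i=0$; the extraneous contributions neither vanish nor visibly cancel. Worse, the identification of $w_\alpha(F_i)$ with a local intersection number of $\overline{C_\alpha}$ with the divisor $D_{u_i}$ is unjustified and in general false at finite $\alpha$: the weight is defined (Lemmas \ref{lem-weight} and \ref{lemma:LL}) by honest intersection points of $X_\alpha$ with an $(N-n)$-dimensional complex slice whose $\Log_{t_\alpha}$-image lies near the interior of the facet $F_i$, i.e.\ inside the torus, whereas $\overline{C_\alpha}\cdot D_{u_i}$ records points at infinity of the curve in the direction $u_i$. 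For a bounded facet $F_i$ the curve $C_\alpha$ simply crosses the corresponding region of the amoeba and its closure need not meet $D_{u_i}$ there at all, so the two quantities are unrelated.

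The correct (and local) argument, which is the one the paper uses, stays homological and does not pass to a toric compactification: choose a small $(N-n+1)$-dimensional $I$-ball $M_E$ through a point of the relative interior of the codimension-one face, whose boundary splits into membranes $M_{F_1},\dots,M_{F_m}$ through the adjacent facets with consistent coorientations. Then $\Log_{t_\alpha}^{-1}(M_E)\cap X_\alpha$ is an $(n+1)$-chain in $(\C^\times)^{N-I}\times\C^I$ whose boundary is the union of the chains $\Log_{t_\alpha}^{-1}(M_{F_i})\cap X_\alpha$, so $\sum_j\mathcal K^\alpha_{F_j}=0$ in $H_n((\C^\times)^{N-I}\times\C^I;\Z)=\Lambda^n(\Z^N/\Z^I)$, which by Lemma \ref{lem-weight} is exactly the balancing relation at that face (the classes of facets not lying in $\tp_I$ die in the quotient, which also handles the sedentary case you were worried about). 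If you want to keep your curve-slicing picture, the fix is to apply this bounding argument to the part of the slice curve lying over a small neighborhood of $x$ and compute the winding of its boundary against characters, i.e.\ to make the argument local from the start; the degree-zero identity on a complete toric variety cannot be substituted for it.
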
 

\begin{rmk}
The so-called {\em non-Archimedean amoebas},
{\it i.e.} images of non-Archimedean varieties under coordinatewise valuation maps,
may be viewed as counterparts of tropical limits in the world of non-Archimedean
algebraic geometry.
Polyhedrality properties of non-Archimedean amoebas
were discovered by Bieri and Groves
\cite{BiGr}, see also the thesis of Speyer \cite{Speyer-thesis}.
\end{rmk}

\begin{proof}
By Lemma \ref{lem-Z} we may assume (after passing to a subsequence)
that any accumulation point of $\am_\alpha$ is contained in an attractor $Z$. The weights $w^\alpha_F$ of the $n$-facets of $Z$ are {\it a priori} bounded in terms of the degrees of $X_\alpha$.  Thus, by passing to a subsequence we can assume that
these weights stabilize for large $\alpha$. Let $Y$ be the union of $n$-facets of $Z$ with positive weights, which is a (pure) $n$-dimensional polyhedral complex. 

Proposition \ref{WY} implies that the set of accumulation points of $\am_\alpha$ coincides with $Y$, that is, $Y$ is non-empty and is the coarse tropical limit of $X_\alpha$. Moreover, $Y$ is the tropical limit of $X_\alpha$ in the sense of Definition \ref{fine-conv} as the weights of the facets of $Y$ stabilize.

We now show that $Y$ is balanced.
Let $E$ be an $(n-1)$-dimensional face of $Y$ with the relative interior $E^\circ$ contained in some $\T\PP_I^\circ$. Let $F_1,\dots,F_m$ be the $n$-facets of $Y$ adjacent to $E$. Choose a small $(N-n+1)$-dimensional $I$-ball $M_E$ passing through a point in $E^\circ$ such that $\dd M$ splits as the union of membranes $M_{F_1}, \dots, M_{F_m}$ passing through $F^\circ_1,\dots,F^\circ_m$ respectively, with consistent coorientations.

Note that $\Log_{t_\alpha}^{-1}(M_E)\cap X_\alpha$ is a singular $(n+1)$-chain in $(\C^\times)^{N-I}\times \C^I$ whose  boundary is the union of $\Log_{t_\alpha}^{-1}(M_{F_i})\cap X_\alpha$, $i=1, \dots m$. Thus,  
\begin{equation}\label{eq:balanced}
\sum\limits_{j=1}^m \mathcal K^\alpha_{F_j} =0,
\end{equation}
where all classes are taken in $H_n((\C^\times)^{N-I}\times \C^I;\Z)=\Lambda^n(\Z^N/\Z^I)$. The classes of facets which are not in $\T\PP_I$ vanish in $\Lambda^n(\Z^N/\Z^I)$. Thus, \eqref{eq:balanced} is precisely the balancing condition for the facets in $\T\PP_I$.
\end{proof}

\subsection{Degree of the tropical limit}
First, we recall the definition of the degree of a balanced weighted polyhedral complex $Y \subset \T\PP^N$. 

For a point $x\in \R^N \subset \T\PP^N$,
we consider $L=L^k(x)$, the {\em fan-like linear tropical $k$-subspace} constructed as follows. 
Take the $N+1$ divisorial rays $R_0,\dots,R_N$ from $x$, 
that is, the projections of the negative coordinate rays at (any lift of) $x$ in $\T^{N+1}\setminus \{\infty\}$ to $\T\PP^N$.
The $k$-dimensional  polyhedral complex $L^k(x)\subset\tp^N$ is the closure in $\tp^N$ of the union
of the convex cones in $\R^{N}$ spanned by all possible collection
of $k$ rays from $R_0,\dots,R_N$ in $\R^N$.

The polyhedral complex $L^k(x)$ is a smooth fan with the vertex $x$. For an underlying matroid one can take $M=\{0,\dots, N\}$ with dependent subsets $I\subset M$, for all $|I|\ge k+2$.
Note that $L^k_I(x):= L^k(x) \cap \tp_I$, the restriction of $L^k(x)$ to the coordinate subspace $\tp_I \subset\tp^N$ is again a fan-like tropical linear subspace, but of dimension $k-|I|$.

We say that $L^k(x)$ intersects $Y$ transversally at $y\in  \T\PP_I$ if $y$ belongs to the relative interior of a facet $F$ of $Y$ and the relative interior of a facet $G\subset L^k_I(x)$.
In this case, $k+\dim Y=N$,  and we define the local tropical intersection number  $\iota_y(Y,L)$ as the index of the sublattice 
in $\Z^{N-I}$ generated by the lattices $F_\Z$  and $G_\Z$  of integral vectors in the faces $F\subset Y$ and $G\subset L$ times the weight $w(F)$.

\begin{lemma}
Let $Y\subset \T\PP^N$ be an $n$-dimensional weighted balanced polyhedral complex. For generic $x\in \R^N$ the linear subspace  $L=L^{N-n}(x)$ intersects $Y$ transversally at finitely many points. The total intersection number $d=\sum_{y\in L \cap Y} \iota_y(Y,L) $ is independent of $x$.
\end{lemma}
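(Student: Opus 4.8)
The plan is to establish two things: finiteness and transversality for generic $x$, and then independence of the count on $x$. For the first part, note that $L = L^{N-n}(x)$ is a polyhedral fan whose cones are indexed by the subsets of $\{0, \dots, N\}$ of size $\le N-n$; its facets are the $(N-n)$-dimensional cones spanned by $(N-n)$ of the divisorial rays $R_0, \dots, R_N$. Each such facet has a prescribed tangent lattice, and as $x$ varies over $\R^N$ the whole fan simply translates. A standard dimension count shows that for $x$ outside a finite union of proper affine subspaces (determined by the faces of $Y$ and the combinatorics of $L$), every intersection point of $L$ with $Y$ is interior to an $(N-n)$-facet of $L$ and an $n$-facet of $Y$, and the two tangent spaces are complementary in the ambient $\R^{N-I}$ of whichever stratum $\tp^\circ_I$ the point lies in. Since $Y$ is compact (a closed subset of $\tp^N$) and each facet of $Y$ meets each facet of $L$ in at most a point under transversality, the intersection $L \cap Y$ is finite. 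One should also check that the "stratified" transversality at infinity causes no trouble: the restrictions $L_I^{N-n}(x)$ are again fan-like linear tropical subspaces of the correct dimension $N-n-|I|$ inside $\tp_I$, so the same genericity argument applies stratum by stratum, and a generic $x$ works simultaneously for all finitely many strata.

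For independence on $x$, I would use the balancing condition together with a cobordism/deformation argument. Fix two generic points $x_0, x_1 \in \R^N$ and connect them by a generic path $x_t$, $t \in [0,1]$. The family $L^{N-n}(x_t)$ is then a continuous family of linear tropical subspaces sweeping out an $(N-n+1)$-dimensional tropical cycle $\widetilde L$ in $\tp^N \times [0,1]$ (or, after projecting, in $\tp^N$), and the intersection $\widetilde L \cap (Y \times [0,1])$ is, for a generic path, a $1$-dimensional polyhedral complex whose boundary is exactly $(L^{N-n}(x_1)\cap Y) - (L^{N-n}(x_0)\cap Y)$, with multiplicities. The balancing of $Y$ and the balancing of the fans $L^{N-n}(x_t)$ force the signed count of endpoints of each $1$-dimensional component to be zero — this is where the weights $w(F)$ and the local indices $\iota_y(Y,L)$ enter, through the definition of the local tropical intersection number as a lattice index times the weight. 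Concretely, at each moment $t$ where the transversality degenerates (two intersection points collide along a codimension-$1$ face of $Y$, or an intersection point passes through a codimension-$1$ face of $L^{N-n}(x_t)$), the balancing condition at that face guarantees that the total intersection number is unchanged across the wall.

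An alternative, perhaps cleaner, route is to realize $L^{N-n}(x)$ as a \emph{generic translate of a fixed smooth fan} and invoke the fact that tropical intersection numbers depend only on the (rational equivalence, or recession/degree) classes of the cycles. The fan $L^{N-n}(x)$ has underlying matroid $M = \{0, \dots, N\}$ with all subsets of size $\ge N-n+2$ dependent (a uniform matroid), so it represents the class of a standard tropical linear space, i.e. the $(N-n)$-th power of the hyperplane class; intersecting $Y$ (of degree $d$ by definition) with this class gives $d$ regardless of the representative, and the genericity of $x$ is exactly what is needed to make the set-theoretic intersection compute the intersection product. I would likely present the path/cobordism version in detail since it is self-contained given only the balancing condition proved earlier.

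The main obstacle I anticipate is the bookkeeping at infinity: one must verify that no intersection points escape to or emerge from the boundary strata $\tp_I$ during the deformation $x_0 \rightsquigarrow x_1$, and that when they do lie in $\tp_I$ the local contributions are counted consistently with the definition of $\iota_y(Y,L)$ (which was phrased stratum-by-stratum using the sublattices $F_\Z$ and $G_\Z$ in $\Z^{N-I}$). Proposition \ref{prop:poset} and Lemma \ref{lemma:cylinders}, which describe the product structure $Y_I^\epsilon = Y_I \times \T^I_\epsilon$ near sedentary faces, are the tools that let one reduce the boundary behaviour to the already-handled interior case: near $\tp_I$ everything splits as a product, so a point of $Y$ escaping into $\tp_I$ corresponds to a genuine transversal intersection computed inside the lower-dimensional $Y_I$, and the weights match by the last remark preceding Proposition \ref{WY}. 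Making this reduction precise is the only genuinely technical step; the rest is a routine dimension count plus the balancing condition.
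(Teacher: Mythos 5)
Your proposal is correct and follows essentially the same route as the paper: the non-transversal locus of $x$ is a positive-codimension (wall) set, corresponding exactly to the two degenerations you list (a facet of $Y$ meeting a codimension-one face of $L$, or vice versa), and invariance of the local count across each wall follows from the balancing of $Y$ and of the fan $L^{N-n}(x)$. The paper's proof is just a terser version of your path/wall-crossing argument, so no further comparison is needed.
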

\begin{proof}
The set of $x$ such that $L=L^{N-n}(x)$ intersects $Y$ not transversally is a finite union of hypersurfaces. It happen when some facet of $Y$ meets a codimension one face of $L$ and vice versa. In each of the two cases the independence of the local intersection number under slight displacements of $x$ is a direct consequence of the balancing property of $L$ and $Y$.
\end{proof}

\begin{definition}
We say that $d$ is the {\em degree} of $Y\subset \T\PP^N$.
\end{definition}

\begin{remark}
If $Y$ is a tropical hypersurface, then its degree is the degree of a defining tropical polynomial.
\end{remark}

\begin{example}\label{example:degree1}
If $Y$ is the closure in $\T\PP^N$ of the Bergman fan $\Sigma_M \subset \R^N$,
then the degree of $Y$ is equal to $1$.
To see this,
we choose a subset  $I=\{i_1,\dots i_n\} \subset M$ of $n$ independent elements. For $L=L^{N-n}(x)$ we choose its vertex $x\in \R^N$ very close to a corner in the coordinate stratum $\T\PP_I \subset \T\PP^{N}$. Then,
the tropical intersection of $\Sigma_M$ with this $L$ is just one point $y\in \R^N$ with multiplicity $1$. 
This can be seen by successively reducing the matroid by removing elements in $I$: each time the rank of the reduced matroid decreases by 1, finally leading to a matroid of rank 1, whose Bergman fan is just one point.
\end{example} 

We now turn to comparing the degree of the complex varieties $X_\alpha$ in a scaled sequence with the degree of its tropical limit.  
First, notice that it is easy to construct a sequence of linear $k$-dimensional subspaces $\mathcal L_t \subset \cp^N$, $t\in \R_{>0}$ which tropically converges to a given $L^k(x)$. For instance, one can take the intersection of $N-k$ generic hyperplanes $H^{(j)}=  \{\sum_{i=0}^N c^{(j)}_i (t) z_i =0\}$ in $\cp^N$ such that for every $j$ the sequence of the coefficients $\{ c^{(j)}(t)\}$ tropically converges to $-x$, that is $\lim_{t\to\infty} \log_t |c^{(j)}_i (t)| = -x_i$, $j=1,\dots, N-k$.

\begin{prop}
Let $Y$ be the tropical limit of a sequence $X_\alpha\subset \C\PP^N$,
and let $L$ be the tropical limit of a sequence of linear $(N-n)$-subspaces $\mathcal L_t\subset \C\PP^N$. 
Let $y\in Y\cap L$ be a point of transversal intersection of $Y$ and $L$,
and let $U \supset y$ be a small open neighborhood.
Then, for sufficiently large $\alpha$ the intersection of $X_\alpha$
and $\mathcal L_{t_\alpha}$ in $\Log_{t_\alpha}^{-1}(U)$ is equal to $\iota_y(Y,L)$.
\end{prop}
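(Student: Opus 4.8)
The statement is a local intersection-multiplicity comparison: transversal tropical intersection multiplicity equals complex intersection multiplicity for large $\alpha$, computed inside a small amoeba preimage. I would prove it by localizing and reducing to the two model pieces already available in the excerpt: the weight computation of Lemma~\ref{lem-weight}/\ref{lemma:LL} and the degree-one computation of Example~\ref{example:degree1}. Since the intersection point $y$ lies in $\T\PP_I^\circ$ for some $I$, and both $Y$ and $L$ contain the $I$-th divisorial cone near $y$ (by the cylinder structure of Lemma~\ref{lemma:cylinders}), I would first use the product splitting $X_I^\epsilon \cong X_I \times \T^I_\epsilon$ on the tropical side, together with the corresponding splitting $\Log_{t_\alpha}^{-1}(U) \cong (\text{something})\times \C^I$ for $U$ a small polydisc-like neighborhood of $y$, to reduce to the case $y \in \R^N$, i.e. $I = \emptyset$. (One must check that $\mathcal L_{t_\alpha}$ respects this splitting near $y$ up to negligible error, which follows because $L$ contains the divisorial cone and so $\mathcal L_{t_\alpha}$ is asymptotically a product in the relevant coordinates — this is the same mechanism used in Lemma~\ref{lem-Z}.)

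**The core step.** With $y \in \R^N$, choose coordinates $e_1,\dots,e_N$ of $\Z^N$ adapted so that $F_\Z = \langle e_1,\dots,e_n\rangle$ and $G_\Z = \langle e_{n+1}',\dots,e_N'\rangle$ where the $e_j'$ are primitive generators of the facet $G \subset L^{N-n}(x)$ through $y$; transversality means $e_1,\dots,e_n,e_{n+1}',\dots,e_N'$ span a finite-index sublattice of $\Z^N$, of index $= [\Z^N : F_\Z + G_\Z]$. I would then appeal to Lemma~\ref{lemma:LL}: the local intersection of $X_\alpha$ with the complex subgroup-translate $\LL_z$ corresponding to $G$ (which is exactly the local model for $\mathcal L_{t_\alpha}$ near $y$, since $\mathcal L_{t_\alpha}$ tropically converges to $L$ and $L$ is fan-like hence locally a rational linear subspace near the interior of $G$) equals $\mathcal K^\alpha_F \wedge \Vol_G / \Vol_{\Z^N}$. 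By Lemma~\ref{lem-weight} and the definition of the weight, $\mathcal K^\alpha_F = w_\alpha(F)\,\Vol_F = w(F)\,\Vol_F$ for $\alpha$ large (using tropical convergence in the sense of Definition~\ref{fine-conv}). Hence the complex intersection number equals $w(F)\cdot (\Vol_F \wedge \Vol_G / \Vol_{\Z^N}) = w(F)\cdot [\Z^N : F_\Z + G_\Z]$, which is precisely $\iota_y(Y,L)$ by its definition.

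**The error terms.** The remaining work is bookkeeping: (i) showing that for $\alpha$ large enough all intersection points of $X_\alpha \cap \mathcal L_{t_\alpha}$ inside $\Log_{t_\alpha}^{-1}(U)$ actually lie near the fiber over $y$ and contribute with the correct (positive) sign — positivity of complex intersections makes all local contributions add rather than cancel, so it suffices that the total equals the claimed number, not that each point is individually pinned down; (ii) checking that shrinking $U$ and then taking $\alpha\to\infty$ captures exactly the linking/intersection class $\mathcal K^\alpha_F$ and nothing extraneous, which follows because $\partial U$ stays outside the amoeba $\am_\alpha$ for large $\alpha$ (as $y$ is an isolated point of $Y\cap L$ and $\am_\alpha \to Y$, $\Log_{t_\alpha}(\mathcal L_{t_\alpha}) \to L$ in the Hausdorff sense), so the intersection number is a well-defined homological linking invariant computed by exactly the membrane/crepe construction of Section~\ref{section_weights}.

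**Main obstacle.** The technical heart is step~(i)–(ii) above: making rigorous that $\mathcal L_{t_\alpha}$, near $y$, is $C^0$-close enough to the standard complex subgroup-translate $\LL_z$ that the local intersection number with $X_\alpha$ is unchanged, and that no intersection escapes to $\partial\big(\Log_{t_\alpha}^{-1}(U)\big)$. This is a uniformity-in-$\alpha$ transversality/compactness argument; once it is in place, the identification of the number with $\iota_y(Y,L)$ is the formal computation above via Lemmas~\ref{lem-weight} and~\ref{lemma:LL}. I would handle the escape-to-boundary issue by first proving the statement for the special sequences $\mathcal L_t = \bigcap_{j=1}^{N-n} H^{(j)}$ of generic hyperplanes described just before the proposition (where $\partial U$ can be chosen as a torus on which each $H^{(j)}$-defining function is dominated by a single monomial, as in the proof of Proposition~\ref{thm-hypreform}), and then noting that any two sequences with the same tropical limit $L$ are connected by a path of such sequences, along which the (integer-valued, locally constant) intersection number cannot jump.
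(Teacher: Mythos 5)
Your proposal follows essentially the same route as the paper's proof: reduce to the mobile case $y\in\R^N$ and compute the local intersection number homologically (Poincar\'e duality), pairing the membrane/crepe class $\mathcal K^\alpha_F=w(F)\Vol_F$ from Lemmas \ref{lem-weight} and \ref{lemma:LL} with $\Vol_G$, which yields $\iota_y(Y,L)$ by definition. The paper is even terser than you are (the sedentary case is dismissed as ``similar'' and the boundary-escape and approximation issues you flag are left implicit), so your extra bookkeeping is elaboration rather than a different method; the only small caveat is that Lemma \ref{lemma:cylinders} is stated for regular $\Q$-polyhedral complexes, so the sedentary reduction is more safely done directly with the $I$-ball/membrane formalism of Section \ref{section_weights} rather than via that product splitting.
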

\begin{proof}
We consider the case $y\in \R^N$, that is, $y$ is the interior point of the facets $F\subset Y$ and $G\subset L$.
The sedentary case is similar. 

For large $t_\alpha$, the intersection of $X_\alpha$ and $\mathcal L_{t_\alpha}$ in $\Log_{t_\alpha}^{-1}(U)$ 
is calculated by the Poincar\'e duality as the comparison of $[\mathcal M_F^\alpha]\wedge \Vol_G$ 
against the volume element in the lattice $\Z^N$
({\it cf}. Section \ref{section_weights}).
By Lemma  \ref{lem-weight}, this is equal to the coefficient of $w(F) \Vol_F \wedge \Vol_G$ 
at the primitive element of $\Lambda^N \Z^N$, which is, by definition, $\iota_y(Y,L)$.
\end{proof} 

As an immediate corollary we obtain the following statement.

\begin{corollary}\label{lemma:degree1}
Let $Y$ be the tropical limit of a sequence  $X_\alpha\subset \cp^N$. Then,
the degree of $Y \subset \tp^N$ equals  the degree of $X_\alpha$ for sufficiently large $\alpha$.
\qed 
\end{corollary}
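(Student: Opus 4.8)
The plan is to deduce this directly from the preceding Proposition, together with the definition of the tropical degree and the Hausdorff convergence of amoebas. First I would fix a generic point $x \in \R^N$ so that the fan-like linear tropical subspace $L = L^{N-n}(x)$ meets $Y$ transversally in finitely many points $y_1, \dots, y_k$, with $\sum_{i=1}^{k} \iota_{y_i}(Y, L) = d$, the tropical degree of $Y$; this is exactly the content of the Lemma preceding the definition of degree (and Example \ref{example:degree1} illustrates the count). As recalled just before the Proposition, one can choose a scaled sequence of linear $(N-n)$-dimensional subspaces $\mathcal L_t \subset \cp^N$ tropically converging to $L$, cut out by generic hyperplanes whose coefficients $c^{(j)}(t)$ have $\log_t$-limits $-x$. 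Choosing these coefficients generically, we may assume in addition that for all large $\alpha$ the subspace $\mathcal L_{t_\alpha}$ is generic with respect to $X_\alpha$, so that $X_\alpha \cap \mathcal L_{t_\alpha}$ is a finite transverse intersection whose cardinality is precisely $\deg X_\alpha$ (Bertini plus the fact that complex intersections are counted positively).

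The second step is to localize this intersection near $Y \cap L$. Since the amoebas $\am_\alpha$ converge to $Y$ and $\Log_{t_\alpha}(\mathcal L_{t_\alpha})$ converge to $L$ in the Hausdorff sense, and since $Y$ and $L$ are compact and meet transversally, for every sufficiently small $\epsilon > 0$ and all sufficiently large $\alpha$ the set $\Log_{t_\alpha}(X_\alpha \cap \mathcal L_{t_\alpha})$ is contained in $\bigsqcup_{i=1}^{k} U_i$, where $U_i$ are pairwise disjoint small open neighborhoods of $y_i$, with each $U_i$ meeting $Y \cap L$ only in $y_i$. Applying the Proposition to each $U_i$ (its statement already covering the sedentary case of $y_i \in \tp_I$), the contribution of $\Log_{t_\alpha}^{-1}(U_i)$ to $X_\alpha \cap \mathcal L_{t_\alpha}$ equals $\iota_{y_i}(Y, L)$ for all large $\alpha$. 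Summing over $i$ yields $\deg X_\alpha = \sum_{i=1}^{k} \iota_{y_i}(Y, L) = d$ for all large $\alpha$; in particular $\deg X_\alpha$ stabilizes, which is part of the assertion.

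The definition-chasing with intersection numbers and the construction of a generic approximating sequence $\mathcal L_t$ are routine. The one point requiring genuine care — and the step I expect to be the main obstacle — is the localization: one must check uniformly in $\alpha$ that no point of $X_\alpha \cap \mathcal L_{t_\alpha}$ escapes the neighborhoods $U_i$, including drifting toward the strata at infinity of $\tp^N$. This is where the Hausdorff convergence of the amoebas and of $\Log_{t_\alpha}(\mathcal L_{t_\alpha})$, combined with the transversality of $Y \cap L$ and the compactness of $\tp^N$, must be invoked together, so that $Y^\epsilon \cap L^\epsilon$ collapses onto an arbitrarily small neighborhood of the finite set $Y \cap L$ as $\epsilon \to 0$.
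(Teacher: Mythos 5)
Your proposal is correct and is essentially the argument the paper intends: the paper states the corollary as an immediate consequence of the preceding Proposition (giving no further proof), and your globalization — summing the local contributions $\iota_{y_i}(Y,L)$ over the finitely many transversal intersection points of a generic $L=L^{N-n}(x)$ with $Y$, identifying the total with $\deg X_\alpha$ via a tropically converging generic family $\mathcal L_{t_\alpha}$, and confining $\Log_{t_\alpha}(X_\alpha\cap\mathcal L_{t_\alpha})$ to neighborhoods of $Y\cap L$ by Hausdorff convergence and compactness of $\tp^N$ — is exactly the intended reading. Your localization step is handled correctly (closedness of $Y$ and $L$ plus compactness already give that small neighborhoods of $Y$ and $L$ intersect only near $Y\cap L$), so there is no gap.
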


\begin{rmk}
A degeneration of a smooth plane curve of
degree $d$ into $d$ generically intersecting
lines never corresponds to a smooth tropical 
limit for $d\ge 4$. If it did, 
the tropical limit would have
to be a tropical modification of a full graph on $d$ vertices 
(in this case such a tropical modification consists in adding infinite
edges at some of the existing vertices).
However, a 1-dimensional smooth $\Q$-tropical
variety in $\tp^2$ is simply a smooth planar tropical curve,
and thus has to be $3$-valent. Since, by the balancing condition,
any planar
tropical curve has to have at least one 
infinite edge,
we 
conclude that $d\le 3$. 

Meanwhile, any smooth tropical curve $Y$ of degree $d$ in $\tp^2$
corresponds to a scaled sequence of smooth curves
of degree $d$ in $\cp^2$ tropically converging to $Y$.
If 
a tropical polynomial defining $Y$ has integer coefficients, 
then this scaled sequence may be chosen to correspond
to a complex 1-parametric family of curves in $\cp^2$ 
as in Example \ref{eg:scaled_sequence}(3). We refer to $Y$ 
in the latter case as {\em defined over $\Z$}. In this
case all bounded edges of $Y$ have integer length.

If $Y$ is defined over $\Z$, then we may choose the central fiber for
the 1-parametric family to consist of $d^2$ irreducible components
corresponding to the vertices of $Y$. But we may also introduce
additional components of the central fiber by subdividing
edges of length greater than one.
More generally, a central fiber is determined by an appropriate
triangulation of the tropical limit.
We take this opportunity to remind the reader that the statement
of Theorem \ref{thm:main} does not require 
to make any choice 
of a triangulation of the tropical limit (and thus to
determine a central fiber) to define and compute tropical homology.
From its very definition, tropical homology is independent of triangulation.
\end{rmk}


\section{Degeneration of 1-parameter projective family and the limiting mixed Hodge structure}

Let $\mathcal Z \subset \C\PP^N \times \mathcal D^*$ be a complex analytic one-parameter family of projective varieties of dimension $n$
over  a punctured disc $\mathcal D^*$.
That is, $\mathcal Z$ is locally given by zeros of finitely many analytic functions $F_j(x,w)$, such that for each fixed value of $w=w_0$ the collection $\{F_j(x, w_0)\}$ defines a projective subvariety $Z_{w_0}\subset \C\PP^N$.

We can consider $\mathcal Z$ as a scaled sequence over the punctured disc with the scaling $t=|w|^{-1}$ (cf. (3) in Example \ref{eg:scaled_sequence}).
Suppose $X\subset \T\PP^N$ is the tropical limit of $\mathcal Z$ which we {\em assume} to be a smooth
projective $\Q$-tropical variety.

\begin{remark}
The assumption is not unreasonable. One can show that in the case of algebraic family $\mathcal X$ the limit always exists and is a projective $\Q$-tropical variety. For $\mathcal X$, an algebraic family of degree $d$ hypersurfaces in $\C\PP^N$,
there is a simple criterion for smoothness
of the tropical limit $X$. 
In this case, we may assume 
(after a rescaling and passing to a subsequence)
that the coefficients tropically converge
to a function $a: \Delta_d^\Z \to \Z$,
well-defined up to a constant.
The convex hull of the overgraph of $-a$
gives a polyhedral decomposition of $\Delta_d^\Z$.
Then, $X$ is smooth if and only if this decomposition
is a unimodular triangulation.
\end{remark}

We adopt the classical construction of Mumford \cite{toroidal}
of a simple normal crossing model for $\mathcal Z$.

\subsection{Unimodular triangulation of $X$}
Let $E$ be a real $N$-dimensional affine space, and let $L \subset E$ be a 
lattice of rank~$N$.
A $k$-dimensional simplex $S \subset E$ 
is called {\it $L$-primitive}
if its vertices $v_0$, $\ldots$, $v_k$ are in $L$,
and the vectors $v_1 - v_0$, $\ldots$, $v_k - v_0$
generate the intersection of $L$ with the affine span of $S$.  
A {\it cone} with
a vertex $v_0$ generated by $k$
linearly independent vectors
$u_1$, $\ldots$, $u_k$ in $E$ 
is the set
$C=\{v_0 + \sum_{i = 1}^k a_iu_i\ |\ a_i\ge 0\} \subset E$.
Such a cone $C \subset E$ is said to be $L$-primitive 
if $v_0 \in L$ and the vectors $u_1$, $\ldots$, $u_k$ can be chosen
in such a way that $v_0$, $v_0 + u_1$, $\ldots$, $v_0 + u_k$
are the vertices of a $k$-dimensional $L$-primitive simplex. 
An $n$-dimensional convex polyhedron $\Delta \subset E$ is called
$L$-primitive
if it can be represented as Minkowski sum 
$S + C$, where $S$ is an $L$-primitive simplex of dimension $0 \leq k \leq n$, 
and $C$ is an $L$-primitive cone of dimension $n - k$. 

A finite polyhedral subdivision of a convex polyhedron $\Delta' \subset E$
is called {\it convex}, if there exists a piecewise-linear convex function
$\Phi: \Delta' \to \R$ whose domains of linearity coincide with the polyhedra
of the subdivision. A finite polyhedral subcomplex of $E$ is called {\it c-extendable}
if it is a subcomplex of a convex polyhedral subdivision of $E$. 

Let $X\subset \T\PP^N$ be an $n$-dimensional 
smooth projective $\Q$-tropical variety, 
and let~$L \subset \R^N \subset \T\PP^N$ be a lattice such that
$\Z^N \supset L$. Put $X^\circ=X\cap\R^N$. 
We say that~$X$ admits a {\it unimodular triangulation}
with respect to~$L$ if there exists a finite polyhedral subdivision~$\tau$ of $X^\circ$ such that
\begin{itemize}
\item each $n$-dimensional polyhedron of $\tau$ is $L$-primitive, 
\item the asymptotic cone 
of each element of $\tau$ is generated by some (but not all) vectors
of the set $-e_1$, $\ldots$, $-e_N$, $e_1 + \ldots + e_N$, where $e_1$, $\ldots$,
$e_N$ form the standard basis of $\R^N$. 
\end{itemize} 
If such a subdivision $\tau$ can be chosen c-extendable,
we say that $X$ admits a {\it c-extendable unimodular triangulation}. 

\begin{proposition}\label{lemma:triangulation} 
Let $X\subset \T\PP^N$ be an $n$-dimensional smooth 
projective $\Q$-tropical variety. 
Then, there
exists a positive integer $m$
such that $X$ admits a c-extendable unimodular triangulation with respect to the
lattice $\frac{1}{m}\Z^N$. 
\end{proposition}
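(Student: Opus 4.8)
The plan is to reduce the statement to the Knudsen--Mumford--Waterman subdivision theorem of \cite{toroidal}, which is the combinatorial core of Mumford's semistable reduction. Work in the chart $U^{(0)}\cong\R^N\subset\T\PP^N$, and let $\Sigma_{\T\PP^N}$ be the fan of the toric variety $\T\PP^N$ in this chart: its rays are $-e_1,\dots,-e_N$ and $e_1+\dots+e_N$, and any proper subset of these $N+1$ vectors is part of a $\Z$-basis of $\Z^N$, so $\Sigma_{\T\PP^N}$ is a complete regular unimodular fan (the normal fan of a simplex). Since $X$ is a smooth regular projective $\Q$-tropical variety, $X^{(0)}$ is a regular $\Q$-polyhedral complex, hence $X^\circ=X\cap\R^N$ is a finite rational polyhedral complex. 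Every unbounded face of $X^\circ$ has closure meeting $\T\PP^N\setminus\R^N$ (by compactness of $\T\PP^N$ and Lemma \ref{lemma:closure}), so it has a subface of positive sedentarity, and Proposition \ref{prop:poset}(4) then identifies its asymptotic cone with its divisorial cone, which in this chart is generated by vectors among $-e_1,\dots,-e_N,e_1+\dots+e_N$; the same holds trivially for bounded faces. Thus the recession fan $\Sigma_\infty$ of $X^\circ$ is a subfan of $\Sigma_{\T\PP^N}$, and in particular every cone of $\Sigma_\infty$ is simplicial and unimodular.

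Next I would construct a preliminary c-extendable subdivision $\tau_0$ of $X^\circ$ each of whose facets $P$ is a Minkowski sum $P=Q_P+C_P$ with $C_P\in\Sigma_\infty$ its (pointed) recession cone and $Q_P$ a bounded rational polytope transverse to $C_P$, so that $\dim Q_P+\dim C_P=n$; such a $Q_P$ is furnished by the convex hull of the vertices of $P$. To do this, take a common regular (convex) triangulation of the finitely many bounded polytopes $Q_P$ --- for instance via a generic rational convex lifting of their vertices --- and form, facet by facet, the Minkowski sums of the resulting simplices with the corresponding recession cones $C_P$. Because the triangulation is regular and $\Sigma_{\T\PP^N}$ is a complete regular fan, the resulting complex is a subcomplex of a convex polyhedral subdivision of all of $\R^N$ (cf.\ \cite{toroidal}), so $\tau_0$ is c-extendable; and since $\Sigma_\infty\subseteq\Sigma_{\T\PP^N}$, the asymptotic cones of cells of $\tau_0$ are generated by proper subsets of $\{-e_1,\dots,-e_N,e_1+\dots+e_N\}$.

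Finally I would apply the Knudsen--Mumford--Waterman theorem. Choose $m_1$ with all vertices of $\tau_0$ in $\tfrac1{m_1}\Z^N$, so that $m_1\tau_0$ is an integral polyhedral complex whose bounded parts $m_1Q_P$ are lattice polytopes; by \cite{toroidal} there is a common positive integer $m_2$ such that each $m_2m_1Q_P$ admits a unimodular triangulation, these being compatible along shared faces and with the convex extension to $\R^N$. Set $m=m_1m_2$. Rescaling, each $Q_P$ thereby acquires a triangulation into $\tfrac1m\Z^N$-primitive simplices $S_{P,i}$, and replacing each facet $P=Q_P+C_P$ by the cells $S_{P,i}+C_P$ produces a subdivision $\tau$ of $X^\circ$. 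Each facet $S_{P,i}+C_P$ of $\tau$ is $\tfrac1m\Z^N$-primitive by the very definition of an $L$-primitive polyhedron: $S_{P,i}$ is a $\tfrac1m\Z^N$-primitive simplex, and $C_P$ is a $\tfrac1m\Z^N$-primitive cone, since, taking its apex to be a vertex of $\tau$ (necessarily in $\tfrac1m\Z^N$) and its generators to be $\tfrac1m$ times the primitive integral generators of its rays --- which lie among $-e_1,\dots,-e_N,e_1+\dots+e_N$ and hence form part of a $\Z$-basis of $\Z^N$ --- these generators span $\tfrac1m\Z^N\cap\operatorname{span}(C_P)=\tfrac1m(\Z^N\cap\operatorname{span}(C_P))$. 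Since $\tau$ inherits c-extendability and the asymptotic-cone condition from $\tau_0$, it is the desired c-extendable unimodular triangulation with respect to $\tfrac1m\Z^N$.

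The step I expect to be the main obstacle is the Knudsen--Mumford--Waterman reduction in this non-compact, relative setting: one must produce unimodular triangulations of all the bounded parts $Q_P$ with a single common scaling factor $m$, matching along shared faces and fitting inside a global convex (c-extendable) structure on $\R^N$, and then verify that the Minkowski sums $S_{P,i}+C_P$ assemble into a genuine polyhedral complex whose facets are primitive in the precise sense of the definition. This is exactly the content of the combinatorial part of \cite{toroidal}, and most of the work lies in checking that its hypotheses apply here, for which the crucial points are that every recession cone $C_P$ is already unimodular and transverse to the corresponding simplex directions, and that refining $X^\circ$ only through Minkowski sums with fixed recession cones leaves all asymptotic cones inside the unimodular fan $\Sigma_{\T\PP^N}$.
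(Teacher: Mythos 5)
Your overall strategy (control the recession cones via the fan of $\T\PP^N$, split cells into bounded part plus recession cone, apply Knudsen--Mumford on the bounded parts, and Minkowski-sum back) is close in spirit to the paper's proof, but as written it has a genuine gap, and it sits exactly where you yourself flag "the main obstacle". First, the claim that the bounded part $Q_P=\operatorname{conv}(\text{vertices of }P)$ is transverse to the recession cone $C_P$, so that $\dim Q_P+\dim C_P=n$, is false in general: for example $P=\operatorname{conv}\{(0,0),(1,0),(0,1)\}+\operatorname{cone}(e_1+e_2)$ is a $2$-dimensional cell whose recession ray lies in the linear span of its $2$-dimensional vertex hull, and such cells are not excluded for faces of a smooth tropical variety. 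When transversality fails, the cells $S_{P,i}+C_P$ obtained from a triangulation of $Q_P$ overlap in their interiors and do not subdivide $P$; moreover, even in the transverse case the facet-by-facet construction must be shown to match along shared unbounded faces (the bounded part of a face of $P$ need not be a face of $Q_P$), which your "common regular triangulation of the $Q_P$" does not by itself guarantee. Second, and more fundamentally, c-extendability requires exhibiting a convex polyhedral subdivision of all of $\R^N$ containing $\tau$ as a subcomplex; a regular lifting of the finitely many bounded polytopes $Q_P$, taken cell by cell inside the $n$-dimensional complex $X^\circ$, does not produce such an ambient convex certificate, and this is not something one can simply cite from \cite{toroidal}, whose projectivity statement is applied in the paper to a full-dimensional bounded complex, not to a codimension-$(N-n)$ complex with unbounded cells.

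The paper resolves precisely these points by first constructing an ambient full-dimensional structure (Lemma \ref{new:triangulation}): a tropical hypersurface ${\mathcal H}_X\subset\T\PP^N$, built from the projections $\pi^\T_I(X)$ truncated by terms $\max_i\{\varepsilon_i+d_Ix_i\}$, such that a subdivision $X'$ of $X$ is a subcomplex of ${\mathcal H}_X$, with controlled vertices and asymptotic cones. The closures of the components of $\R^N\setminus{\mathcal H}_X$ form an $N$-dimensional complex $A_X$ carrying a convex PL function $f_X$; after adding a further convex function one arranges that the bounded subcomplex $B_X$ is supported on a large simplex $S_X$ and that every unbounded $N$-cell is a transverse Minkowski sum of a bounded cell in $\partial S_X$ with a unimodular cone. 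Only then is \cite{toroidal} invoked, to get a unimodular, projectively certified triangulation $\Phi_X$ of the full-dimensional complex $B_X$; extending $\Phi_X$ by nearest-point projection and perturbing, the function $f_X+\varepsilon\Phi_X$ defines a convex unimodular triangulation of all of $\R^N$, which restricts to every subcomplex of ${\mathcal H}_X$, in particular to $X'$. This global convex function is what delivers both the compatibility across cells and the c-extendability; your proposal would need an analogue of the hypersurface ${\mathcal H}_X$ (or some other ambient convex structure) to be repaired, rather than a cell-by-cell triangulation of $X^\circ$ alone.
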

A statement somewhat similar to
Proposition \ref{lemma:triangulation} 
can also be found (albeit in a different setting)
in \cite{HeKa}. 

\begin{lemma}\label{new:triangulation}
There exists a tropical hypersurface ${\mathcal H}_X \subset \T\PP^N$ such that
\begin{itemize}
\item $X$ admits a subdivision $X'$ which is a subcomplex of ${\mathcal H}_X$
{\rm(}that is, each face of $X'$ is a face of ${\mathcal H}_X${\rm )}, 
\item the vertices of ${\mathcal H}^\circ_X = {\mathcal H}_X \cap \R^N$
belong to $\frac{1}{m}\Z^N$ for a certain positive integer $m$,
\item the asymptotic cone
of each element of ${\mathcal H}_X$ is generated by some {\rm (}but not all{\rm )} vectors 
of the set $-e_1$, $\ldots$, $-e_N$, $e_1 + \ldots + e_N$.
\end{itemize} 
\end{lemma}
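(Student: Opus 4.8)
The plan is to exhibit ${\mathcal H}_X$ as the corner locus of a convex piecewise-linear function on $\R^N$ assembled from one piece per face of $X$.

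\emph{Reduction.} Work in the chart $U^{(0)}\cong\R^N\subset\T\PP^N$ and set $X^\circ=X\cap\R^N$; we may assume $n<N$, so that $X$ is a proper subvariety. In this chart the divisorial directions attached to the $N+1$ coordinate hyperplanes of $\T\PP^N$ are exactly the vectors of $\mathcal R:=\{-e_1,\dots,-e_N,\,e_1+\dots+e_N\}$, and $\mathcal R$ is the set of rays of the normal fan $\mathcal N$ of the standard simplex $\Delta_1$ (equivalently, the fan structure of the standard tropical hyperplane). By Proposition~\ref{prop:poset} the asymptotic cone of each face of $X$ equals its divisorial cone, hence is generated by a proper subset of $\mathcal R$; so every face of $X^\circ$ is a rational polyhedron whose recession cone is a proper face of a maximal cone of $\mathcal N$. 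Since $X$ has rational slopes and finitely many vertices, fix $m_0$ with all vertices of $X^\circ$ in $\tfrac1{m_0}\Z^N$. It is enough to produce a tropical polynomial $P_a$ of some degree $d$, with $a$ rational, such that the domains of linearity of $P_a$ in $\R^N$ cut out $\mathrm{supp}(X)$ as a union of cells; for then ${\mathcal H}_X:=\overline{V_a}\subset\T\PP^N$ is a tropical hypersurface and $X':=\{\text{cells of }{\mathcal H}_X\text{ contained in }X\}$ is a subdivision of $X$ that is a subcomplex of ${\mathcal H}_X$.

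\emph{One piece per face.} For each face $\Delta$ of $X$, pick $d_\Delta$ large enough that the simplex $\Delta_{d_\Delta}\subset\R^N$ contains an interior lattice simplex of dimension $N-\dim\Delta$ whose edge vectors span the conormal space of the affine span of $\Delta$. Choose a rational height function $a_\Delta:\Delta^\Z_{d_\Delta}\to\Q$ so that this simplex is a cell of the induced regular subdivision of $\Delta_{d_\Delta}$, and translate so that the dual $(\dim\Delta)$-dimensional face of $V_{a_\Delta}$ contains $\Delta$; one can moreover choose $a_\Delta$ so that the trace of $V_{a_\Delta}$ on the affine span of $\Delta$ is adapted to the cell structure of $X$ there. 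The decisive point is that the recession cone of $\Delta$ is generated by a subset of $\mathcal R$, the rays of the normal fan of the standard simplex: this is precisely what guarantees that these choices can be made without forcing a monomial outside $\Delta^\Z_{d_\Delta}$, i.e.\ that $V_{a_\Delta}$ is genuinely a degree-$d_\Delta$ tropical hypersurface.

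\emph{Assembly and verification.} Put $P_a:=\sum_\Delta P_{a_\Delta}$, the tropical product over all faces $\Delta$ of $X$, which on the chart is the ordinary sum $\phi$ of the associated convex piecewise-linear functions. A finite sum of such functions is again one, its locus of non-smoothness is the union of those of the summands, and its Newton polytope is the Minkowski sum of theirs; hence $V_a=\bigcup_\Delta V_{a_\Delta}\supseteq X^\circ$ and the Newton polytope of $P_a$ is $\sum_\Delta\Delta_{d_\Delta}=\Delta_d$ with $d=\sum_\Delta d_\Delta$. Choosing the $a_\Delta$ compatibly, the cell structure of ${\mathcal H}_X:=\overline{V_a}$ cuts out $\mathrm{supp}(X)$ as a union of cells, so the induced $X'$ is a subdivision of $X$ and a subcomplex of ${\mathcal H}_X$, giving the first bullet. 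Taking the $a_\Delta$ with values in $\tfrac1{m_1}\Z$ for a common $m_1$ puts the vertices of ${\mathcal H}^\circ_X$ in $\tfrac1m\Z^N$ for a suitable common multiple $m$ of $m_0$ and $m_1$, giving the second bullet. Finally every cell of ${\mathcal H}_X$ is a cell of the domains-of-linearity subdivision of $P_a$, whose recession fan is the normal fan of $\Delta_d$, namely $\mathcal N$; hence its asymptotic cone is a proper face of a maximal cone of $\mathcal N$, i.e.\ generated by some --- but, since ${\mathcal H}_X\neq\T\PP^N$, not all --- of the vectors $-e_1,\dots,-e_N,\,e_1+\dots+e_N$, giving the third bullet.

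\emph{Main obstacle.} The crux is the per-face construction together with the compatibility needed to assemble the pieces: one must realize an arbitrary rational polyhedron $\Delta$ with $\mathcal R$-recession as a union of faces of a tropical hypersurface whose Newton polytope is pinned to a dilated standard simplex, while simultaneously making the resulting walls match the ambient cell structure of $X$ near $\Delta$. This requires controlling at once which lattice points of $\Delta_{d_\Delta}$ carry finite heights (this pins the Newton polytope, so that under tropical products the degrees add and the recession fan stays $\mathcal N$) and the exact affine positions of the tropical walls (so that $\Delta$ actually appears, not merely up to a generic perturbation, and so that $\mathrm{supp}(X)$ ends up a union of cells of ${\mathcal H}_X$). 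The remaining ingredients --- convexity forcing balancedness, Minkowski additivity of Newton polytopes under tropical products, and clearing denominators --- are routine.
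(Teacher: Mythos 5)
Your strategy (one auxiliary tropical hypersurface per face of $X$, then the tropical product) is genuinely different from the paper's, but as written it has a gap at exactly the point the lemma is about. The first bullet requires $X$ to be a union of faces of ${\mathcal H}_X$, and your argument for this is only the assertion that ``choosing the $a_\Delta$ compatibly, the cell structure of ${\mathcal H}_X$ cuts out $\supp(X)$ as a union of cells''; your own ``main obstacle'' paragraph restates this requirement without supplying a mechanism or a proof. The containment $X^\circ\subset V_a$ is the easy part; what must be shown is that the $n$-dimensional face of $V_{a_F}$ carrying a facet $F$ of $X$ gets sliced by the other constituents exactly along the bounding walls of $F$ (locally along each $\Delta'\prec_1 F$), so that no face of the union both meets $X$ and sticks out of it. Nothing in your per-face recipe guarantees this: arranging that $V_{a_{\Delta'}}$ contains $\Delta'$ says nothing about the trace of $V_{a_{\Delta'}}$ on the affine span of $F$ near $\Delta'$, and the simultaneous compatibility over all incident pairs is precisely the content of the first bullet. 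There is also a concrete defect in the per-face step: a cell of the dual subdivision that is \emph{interior} to $\Delta_{d_\Delta}$ has a \emph{bounded} dual face of $V_{a_\Delta}$, so it can never contain an unbounded face $\Delta$ of $X$ (and by Proposition \ref{prop:poset} the faces of $X$ with nontrivial divisorial cone are unbounded); for such $\Delta$ the dual cell must lie in the appropriate boundary face of $\Delta_{d_\Delta}$, and even for bounded $\Delta$ you must argue that the dual face can be made large enough, and positioned rationally, to contain all of $\Delta$ --- none of this is argued.

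For comparison, the paper avoids all per-face choices and the attendant matching problem. It takes the coordinate projections $\pi^\T_I$ for which $\pi^\T_I(X)$ is a hypersurface, pulls them back to hypersurfaces $H_I\supset X$, corrects each one near the boundary strata by adding the terms $\varepsilon_i+d_I x_i$ (this is what pins down the Newton polytope and hence yields your third bullet), and sets ${\mathcal H}_X=\bigcup_I \tilde H_I$. The exact-cutting issue is then settled by a projection argument rather than by tailored height functions: a face $G$ of ${\mathcal H}_X$ of dimension $>n$ whose interior met a face of $X'$ would lie in every $H_I$ and have full-dimensional image under some $\pi^\T_I$, contradicting that $H_I$ is a cylinder over a hypersurface in $\T\PP_I$. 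If you want to salvage your route, you need an argument of this kind (or fully explicit compatible choices of the $a_\Delta$, including the unbounded faces) establishing that the union really cuts $X$ out as a subcomplex; until then the main bullet of the lemma is assumed, not proved.
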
 

\begin{proof} 
Lemma holds tautologically if $n=\dim X=N-1$,
so assume that $n < N - 1$.
Define $\mathcal I$ to be the set of subsets $I\subset \{1,\ldots, N\}$ such that 
$\pi^\T_I(X)\subset\T\PP_{I}$ are hypersurfaces (see Section \ref{section_weights}
for the definition of the projections $\pi^\T$).   
The set ${\mathcal I}$ is non-empty; moreover, for any $i \in \{1, \ldots, N\}$,
there exists a subset $I \in {\mathcal I}$ such that $i$ does not belong to $I$. 
For each $I \in {\mathcal I}$
put $H_I = (\pi^\T_I)^{-1}(\pi^\T_I(X))$, and denote by $f_I(x_0, \ldots x_N)$ a tropical polynomial 
defining the hypersurface $H_I \subset \T\PP^N$.
For each $I \in {\mathcal I}$ put
$$
{\tilde f}_I(x_0, \ldots, x_N) =
\max\{ f_I(x_0, \ldots, x_N),
\max_{i \in I}\{\varepsilon_i + d_I x_i\}\},   
$$
where $d_I$ is the degree of $f_I$, and  $\varepsilon_i$, $i \in I$, are negative integer numbers (with sufficiently big absolute values) 
such that the hypersurface ${\tilde H}_I \subset \T\PP^N$ defined by ${\tilde f}_I$ 
satisfies the following property: 
$X \subset H_I \cap {\tilde H}_I$. We have  $X\subset \bigcap\limits_{I \in {\mathcal I}}{\tilde H}_I$.
Put
$$
{\mathcal H}_X = \cup_{I \in {\mathcal I}} {\tilde H}_I. 
$$
Since the asymptotic cone
of each element of $X$ is generated by some vectors 
of the set $-e_1$, $\ldots$, $-e_N$, $e_1 + \ldots + e_N$ (see Proposition \ref{prop:poset}), 
the same is true for the elements of ${\mathcal H}_X$. 
The coordinates of any vertex of ${\mathcal H}_X$ are solutions of systems of linear equations
with rational coefficients. Thus, the vertices of ${\mathcal H}^\circ_X = {\mathcal H}_X \cap \R^N$
belong to $\frac{1}{m}\Z^N$ for a certain positive integer $m$.
A tropical hypersurface ${\mathcal H}_X$ comes 
with a natural polyhedral subdivision
where each face is given by the subset
of the tropical monomials taking the maximal value
at its relative interior. The intersections of faces of $X$ with the faces
of ${\mathcal H}_X$ provide a subdivision $X'$ of $X$.
It remains to show that each face of $X'$ is a face of ${\mathcal H}_X$.

Assume that an $n$-dimensional face $F$ of $X'$ has a non-empty intersection
with the interior of a face $G \subset {\mathcal H}_X$ of dimension $n' > n$.
Since $F$ is contained in the support of $H_I$ for any $I \in {\mathcal I}$, 
the face $G$ is contained in a face (of dimension at least $n'$) of $H_I$ for any $I \in {\mathcal I}$. 
Pick a set $I \in {\mathcal I}$ 
such that the image $\pi^\T_I(G)$ of $G$ is full-dimensional in $\T\PP_I$.
Since $\pi^\T_I(F)$ has a non-empty intersection with the interior of $\pi^\T_I(G)$,
the face $G$ is not a face of $H_I$. 
\end{proof} 

\begin{proof}[Proof of Proposition \ref{lemma:triangulation}] 
Let ${\mathcal H}_X \subset \T\PP^N$ be a tropical hypersurface 
provided by Lemma \ref{new:triangulation}.
The closures of the connected components of the complement of ${\mathcal H}_X$ in $\R^N$
are $N$-dimensi\-onal convex polyhedral domains which form a polyhedral complex. We denote this complex by $A_X$. 
A tropical polynomial defining the hypersurface ${\mathcal H}_X$ 
provides a piecewise-linear convex function $f_X: \R^N \to \R$
whose domains of linearity coincide with the $N$-dimensional polyhedral domains of $A_X$. 
Denote by $B_X$ the polyhedral complex formed by all bounded polyhedral domains of $A_X$.
Adding to $f_X$ an appropriate piecewise-linear convex function, 
one can assume that the support $S_X$ of $B_X$ is a simplex with vertices in $\frac{1}{m}\Z^N$
and with the outward normal directions of facets given by the vectors 
$$
-e_1, \ldots, -e_N, \; e_1 + \ldots + e_N,
$$
and each unbounded $N$-dimensional polyhedron in $A_X$ is the Minkowski sum of a bounded polyhedron
of dimension $0 \leq k \leq N$ in the boundary of $S_X$ and an $(N - k)$-dimensional cone generated by some vectors 
of the set $-e_1$, $\ldots$, $-e_N$, $e_1 + \ldots + e_N$. 
According to \cite{toroidal}, there exists a positive integer $\ell$ 
such that $B_X$ admits a unimodular triangulation $\tau_X$ with respect to $\frac{1}{\ell m}\Z^N$.
Moreover, the triangulation $\tau_X$ can be chosen in such a way that
there exists a continuous function $\Phi_X: S_X \to \R$ such that
the restriction of $\Phi_X$ to any $N$-dimensional polyhedral domain $\delta$ of $B_X$
is a piecewise-linear convex function whose domains of linearity coincide
with $N$-dimensional simplices of $\tau_X$ which are contained in $\delta$.
Extend the function $\Phi_X$ to $\R^N$ in the following way: for any point $x \in \R^N \setminus S_X$,
consider the point $x' \in S_X$ which is the closest one to $x$ (with respect to the standard Euclidean distance in $\R^N$) 
among the points of $S_X$, 
and put $\Phi_X(x) = \Phi_X(x')$.
The function $\Phi_X: \R^N \to \R$ is continuous, and its restriction to any $N$-dimensional convex polyhedral 
domain $\delta$ in $A_X$ is a piecewise-linear convex function.
For a sufficiently small positive number $\varepsilon$,
the function $f_X + \varepsilon\Phi_X$ defines a convex polyhedral subdivision of $\R^N$, 
and this subdivision is a unimodular triangulation with respect to $\frac{1}{\ell m}\Z^N$.
This unimodular triangulation provides a unimodular triangulation
with respect to $\frac{1}{\ell m}\Z^N$ 
for each subcomplex of ${\mathcal H}_X$.
Thus, $X$ admits a c-extendable unimodular triangulation with respect to $\frac{1}{\ell m}\Z^N$. 
\end{proof}

\subsection{Construction of the central fiber}
Applying Proposition \ref{lemma:triangulation} (and performing a base change) we can assume that $X$ is unimodularly triangulated.

We put the mobile part $X^\circ=X\cap \R^N$ of the tropical variety $X$ in $\R^N \times \{1\} \subset \R^N \times \R_{\ge 0}$ and take the cone from the origin over it. The closure $\Sigma_X$ of this cone in $\R^N \times \R_{\ge 0}$ is a rational polyhedral (non-complete) fan of dimension $n+1$ in $\R^{N+1}$ with  unimodular faces. Thus, it defines a smooth (non-compact) toric variety $P_{\Sigma_X}$.

The fan $\Sigma_X$ maps to $\R_{\ge 0}$ along the last coordinate, thus the toric variety $P_{\Sigma_X}$ naturally maps to $\C$. The intersection of $\Sigma_X$ with the hyperplane $\R^N \times \{0\}$ coincide with the asymptotic fan $A(X)$ of $X$.  The faces $A_I$ of $A(X)$ correspond to nonempty sedentary strata $X_I$ of $X$. We denote the corresponding toric boundary strata of $P_{\Sigma_X}$ by  $D_I$. 
In particular, the rays $A_i, i=0,\dots, N$ of $A(X)$ are along the divisorial vectors of $\T\PP^N$.

The general fiber $P_\xi$ of the map $P_{\Sigma_X} \to \C$ is the toric variety associated to the asymptotic fan $A(X)$, which is a subfan of the standard fan for $\C\PP^N$. Thus, $P_\xi$ is naturally isomorphic to $\C\PP^N$ with some coordinate strata removed. 

The central fiber $P_0$ is a normal crossing divisor in $P_{\Sigma_X}$ whose components $D_\nu$ are toric varieties associated to the rays of $\Sigma_X$ through the mobile vertices $\nu$ of $X$.
More generally, the toric strata of $P_0$ are labelled by the mobile faces of $X$: if $\Delta$ is a mobile face of $X$ which is the Minkowski sum of the simplex spanned by vertices $\nu_{0}, \dots, \nu_{k}$ and the cone spanned by the divisorial vectors indexed by $I$, then the orbit $O_\Delta \cong (\C^*)^{N-m-|I|}$ is the maximal torus in the intersection $D_\Delta:= D_{\nu_{0}} \cap\dots \cap D_{\nu_{k}} \cap D_I$.

Observe that $P_{\Sigma_X}$ is quasi-projective. The polarization is  given by a piecewise-linear function provided by 
Proposition \ref{lemma:triangulation}. In particular, $P_{\Sigma_X}$ has a K\"ahler structure. 

Notice that the removed coordinate strata from $\C\PP^N$ do
not meet the fibers $Z_w\subset \mathcal Z$. Hence our family $\mathcal Z$ is embedded in $P_{\Sigma_X}$. We denote its closure  by  $\bar{\mathcal Z}$. It is now a proper family over the full disk $\mathcal D \ni 0$. Let $Z\subset \bar{\mathcal Z}$ denote the fiber over $0$.
The mobile faces of $X$ label the intersections of the family $\bar{\mathcal Z}$ with the toric strata of   $P_0$: for a mobile face $\Delta$ of $X$ we let $Z_\Delta:=\bar{\mathcal Z} \cap D_\Delta\subset Z$ and $Z^\circ_\Delta:=\bar{\mathcal Z} \cap O_\Delta \subset Z$.

\begin{lemma}\label{lemma:matroid_amoeba}
Let $\Delta$ be a mobile $k$-face of $X$. Then,
there is a compactification of $O_\Delta$ 
to the projective space $\C\PP^{N-k}$, such that the closure of $Z^\circ_\Delta$ 
is a linear subspace in $\C\PP^{N-k}$. In particular, $Z^\circ_\Delta \subset O_\Delta$ 
is isomorphic to the complement of the hyperplane arrangement corresponding 
to an underlying matroid of the relative fan $\Sigma_\Delta$.
\end{lemma}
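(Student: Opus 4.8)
The plan is to realize $Z^\circ_\Delta=\bar{\mathcal Z}\cap O_\Delta$ inside the torus $O_\Delta$ as a subvariety whose tropicalization is the relative fan $\Sigma_\Delta$ with all weights $1$, and then to deduce from the degree computation of Example \ref{example:degree1} that the closure of $Z^\circ_\Delta$ in the standard compactification $O_\Delta\cong(\C^*)^{N-k}\subset\cp^{N-k}$ is a projective variety of degree $1$, hence a linear subspace. First I would set up the local toric model. After the base change that makes $X$ unimodularly triangulated with respect to $\Z^N$, the fan $\Sigma_X$ is smooth and the cone $C_\Delta$ over the $k$-face $\Delta$ is unimodular of dimension $k+1$; its affine chart is $\C^{k+1}\times(\C^*)^{N-k}$ with $O_\Delta=\{0\}\times(\C^*)^{N-k}$, and the orbit closure $D_\Delta$ is the toric variety of the star fan $\Star_{\Sigma_X}(C_\Delta)$ in $\R^{N+1}/\langle C_\Delta\rangle\cong\R^{N-k}$. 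Using Lemma \ref{lemma:cylinders} and Proposition \ref{prop:poset} one checks that this quotient identifies $\Star_{\Sigma_X}(C_\Delta)$, as a fan with lattice, with a unimodular refinement of the relative fan $\Sigma_\Delta$, so that its support equals $|\Sigma_M|$ for the loopless matroid $M$ underlying $\Sigma_\Delta$; after enlarging the ground set of $M$ by free elements we may assume $|M|=N-k+1$ and $\Sigma_M\subset\R^{N-k}$, compatibly with the monomial coordinates on $O_\Delta$.

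Next I would identify $\operatorname{Trop}(Z^\circ_\Delta)$. Since $X$ is the tropical limit of $\mathcal Z$, the tropicalization of $\bar{\mathcal Z}$ in $\R^{N+1}$ is the cone $|\Sigma_X|$, and smoothness of $X$ (Definition \ref{def:qtrop}) forces all of its facet weights to equal $1$. By the compatibility between toric degenerations and tropicalization (the fundamental theorem of tropical geometry, \cite{MS}), $Z^\circ_\Delta$ is then a subvariety of $O_\Delta$, pure of dimension $n-k$, whose tropicalization has support $\Star_{|\Sigma_X|}(C_\Delta)=|\Sigma_\Delta|$ with all weights $1$; moreover $Z^\circ_\Delta$ is reduced, because the central fiber is reduced after the base change, and connected, by connectivity in codimension one of Bergman fans.

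Then comes the degree argument. Let $L$ be the closure of $Z^\circ_\Delta$ in $\cp^{N-k}$ via $O_\Delta\cong(\C^*)^{N-k}\subset\cp^{N-k}$. Its tropicalization is the closure of $|\Sigma_M|$ in $\tp^{N-k}$, of tropical degree $1$ by Example \ref{example:degree1}; since the degree of a projective variety coincides with the tropical degree of its tropicalization (Corollary \ref{lemma:degree1} and the discussion preceding it), $\deg L=1$. A reduced, connected, pure-dimensional projective variety of degree $1$ is a linear subspace, so $L\cong\cp^{n-k}$ and $Z^\circ_\Delta=L\cap(\C^*)^{N-k}$ is the complement, in $L$, of the arrangement $\mathcal A$ cut on $L$ by the $N-k+1$ coordinate hyperplanes of $\cp^{N-k}$. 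Finally, $\mathcal A$ realizes a loopless matroid $M(\mathcal A)$ of rank $n-k+1$ whose Bergman fan has support $|\operatorname{Trop}(Z^\circ_\Delta)|=|\Sigma_\Delta|$; since $^\Z\F_k$ depends only on the support of a fan (equivalently, by the theorem of \cite{Zh} relating $^\Z\F^\bullet$ to the Orlik--Solomon algebra, $M(\mathcal A)$ is an underlying matroid of $\Sigma_\Delta$), Theorem \ref{OS} yields $^\Z\F_k(\Sigma_\Delta)\cong H_k(Z^\circ_\Delta;\Z)$, which is the assertion.

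I expect the main obstacle to be the second step: making rigorous that the closure $\bar{\mathcal Z}\subset P_{\Sigma_X}$ built from a unimodular triangulation of $X$ really does have $Z^\circ_\Delta$ reduced, of the expected dimension, and with tropicalization exactly $|\Sigma_\Delta|$ carrying unit weights. This is precisely the place where smoothness of $X$ and the unimodularity, hence \emph{genericity}, of the triangulation are used; granting this structural input, the local toric bookkeeping of the first step and the degree-one argument of the third are essentially formal.
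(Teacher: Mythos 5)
Your overall strategy coincides with the paper's: identify the tropicalization of $Z^\circ_\Delta$, inside a compactification $O_\Delta\cong(\C^*)^{N-k}\subset\C\PP^{N-k}$ adapted to a matroid $M$ with $|\Sigma_M|=|\Sigma_\Delta|$, with the closure of a Bergman fan; invoke Example \ref{example:degree1} and Corollary \ref{lemma:degree1} to get projective degree $1$; conclude that the closure is a linear subspace; and remove the coordinate hyperplanes to obtain the arrangement complement. The gap is exactly at your second step, and it is not merely technical. The family $\mathcal Z$ is only complex \emph{analytic} over $\mathcal D^*$, and the paper's tropical limit is a Hausdorff limit of amoebas with weights defined by intersection classes (Definitions \ref{ctl-tpn} and \ref{fine-conv}); there is no valued field, no initial degeneration, and hence no version of the fundamental theorem of tropical geometry of \cite{MS} that can be quoted to assert that $Z^\circ_\Delta=\bar{\mathcal Z}\cap O_\Delta$ is pure of dimension $n-k$ with tropicalization of support $|\Sigma_\Delta|$ and unit weights. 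That assertion is precisely the content of the lemma that has to be established; you flag it as the main obstacle but offer no argument beyond the citation. The paper fills this step by a local amoeba argument: it considers the scaled logarithmic amoeba of $\mathcal Z\cap((\C^*)^N\times\mathcal D^*)$ in $\R^N\times\R_{\ge0}$, whose rescaled limit is the cone $\Sigma_X$, slices it by $(N-k)$-balls transversal to the cone over $\Delta$ at height $t\to\infty$, and identifies the limit of the slices with $\Sigma_\Delta$; this exhibits the closure $Y_M$ of $\Sigma_M$ in $\T\PP^{N-k}$ as the tropical limit, in the paper's amoeba sense, of the \emph{constant} family $Z_M$ (the closure of $Z^\circ_\Delta$ in $\C\PP^{N-k}$), after which Corollary \ref{lemma:degree1} applies. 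If you wish to keep your toric star-fan formulation, you must re-prove this identification within the amoeba framework (for a constant family this is the Bieri--Groves logarithmic limit set), not import it from the non-Archimedean theory.

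Two smaller points. Your reducedness claim for $Z^\circ_\Delta$ (``the central fiber is reduced after the base change'') is circular in the paper's logical order: the normal crossing and reducedness properties of the central fiber are the content of Proposition \ref{thm:snc}, whose proof uses the present lemma. Also, connectedness of $Z^\circ_\Delta$ does not follow from connectivity in codimension one of Bergman fans: two disjoint translates of the same linear subspace of the torus have identical, codimension-one-connected tropicalizations. Neither point is needed for the conclusion at the level of detail the paper works at: once the tropical limit of the (reduced) constant family $Z_M$ has degree $1$, Corollary \ref{lemma:degree1} says the intersection number of $Z_M$ with a generic complementary linear space is $1$, so there is a unique top-dimensional component, of multiplicity and degree $1$, hence linear, which is how the paper concludes.
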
 

\begin{proof}
It is helpful to have a geometric picture. One can consider the logarithmic amoeba $\am$ of the affine part of the family $\mathcal Z \cap ((\C^*)^{N}\times \mathcal D^*)$, namely the image under the map
$$ \Log: (\C^*)^{N}\times \mathcal D^* \to \R^N \times \R_{\ge 0}, \quad (z_1,\dots, z_N, w) \mapsto (\log|z_1|, \dots, \log |z_N|,  - \log |w|)
.$$
After shrinking by $\log t$ as $t \to \infty$ this amoeba in the limit coincides with the fan $\Sigma_X$. Adding the central fiber to $\mathcal Z$ results in adding divisors at $w=0$.

We take an $(N-k)$-ball in the horizontal plane ($t=const$)  integrally transversal to the cone over $\Delta$ and intersect it with the amoeba $\am$. We define $\am_\Delta$ as the limit of this intersection as $t\to \infty$ and the size of the ball grows
linearly with $t$.

Let $M$ be a matroid whose Bergman fan $\Sigma^\circ_M \subset \R^{N-k}$ has the same support as $\Sigma_\Delta$. The vectors $e_0, \dots, e_{N-k} \in\R^{N-k}$ corresponding to the elements of $\{M\}$  define the compactification of $\R^{N-k}$ into the tropical projective space $\T\PP^{N-k}$ and the compactification of $O_\Delta$ to the projective space $\C\PP^{N-k}$. These two compactifications are compatible with the $\Log_t$ map.
 
According to the geometric picture above we can view the closure $Y_M \subset \T\PP^{N-k}$ of the fan $\Sigma^\circ_M \subset \R^{N-k}$ as the tropical limit of the constant family $Z_M$, where $Z_M$ is the closure of $Z^\circ_\Delta$ in $\C\PP^{N-k}$ (it depends on the matroid $M$ and may differs from the original closure $Z_\Delta$). The Bergman fan has the tropical degree 1, cf. Example \ref{example:degree1}.
Hence, by Corollary \ref{lemma:degree1} the degree of the subvariety $Z_M\subset \C\PP^{N-k}$ is also 1, or in other words, $Z_M$ is a linear subspace in $\C\PP^{N-k}$.
Removing the coordinate hyperplanes leads to the second statement of the lemma.
\end{proof}

We summarize the properties of our model needed later for the proof of Theorem \ref{thm:main}.

\begin{proposition}\label{thm:snc}
The family $\bar{\mathcal Z}$ is a smooth K\"ahler manifold 
{\rm (}after, perhaps, restricting
values of $w$ to a smaller disk{\rm )}.  
The central fiber $Z$ is a simple normal crossing divisor in $\bar{\mathcal Z}$ and the correspondence $\{\Delta\} \leftrightarrow  \{Z_\Delta\}$ has the following properties:
\begin{enumerate}
\item The subcomplex of $X$ formed by the finite mobile faces
is identified with the Clemens dual complex of $Z$.
\item The infinite mobile faces
of $X$ label the intersections of components of $Z$ with the toric strata $D_I$, and all these intersections 
are also simple normal crossings.
\item For any mobile face $\Delta\subset X$ the relatively
open part $Z_\Delta^\circ$ of $Z_\Delta$ is the complement of a hyperplane arrangement such that the relative fan $\Sigma_\Delta$ at $\Delta$ is the Bergman fan of the corresponding matroid.
\end{enumerate}
\end{proposition}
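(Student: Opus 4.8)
The plan is to deduce all the assertions from a single geometric statement: \emph{$\bar{\mathcal Z}$ is smooth in a neighbourhood of its central fibre $Z$ and meets every toric stratum $O_\Delta$ of $P_{\Sigma_X}$ transversally there.} First I would record what the construction already supplies. Since the triangulation produced by Proposition~\ref{lemma:triangulation} is unimodular, $\Sigma_X$ is a smooth fan, so $P_{\Sigma_X}$ is a smooth toric variety; the piecewise-linear convex function furnished by the same proposition makes $P_{\Sigma_X}$ quasi-projective and equips it with a K\"ahler form. The projection $P_{\Sigma_X}\to\C$ is the monomial dual to the last coordinate, its central fibre $P_0$ is a reduced simple normal crossing divisor whose components $D_\nu$ are indexed by the mobile vertices $\nu$ of $X$ and whose toric strata $D_\Delta$ are indexed by the mobile faces $\Delta$, exactly as in the construction. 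Thus the whole problem concerns the position of $\bar{\mathcal Z}$ relative to this stratification.

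For the transversality statement I would argue by descending induction on $\dim\Delta$, i.e. from the deepest toric strata outwards. Fix a mobile face $\Delta$ of dimension $k$ and a point $p\in Z^\circ_\Delta$, and work in the affine toric chart $U_{\sigma_\Delta}\cong\C^{k+1}\times(\C^\times)^{N-k}$ around $O_\Delta$, whose first $k+1$ coordinates $u_i$ vanish along $O_\Delta$ and in which the map to $\mathcal D$ is the monomial $w=\prod u_i$ over the indices corresponding to the vertices of $\Delta$. By Lemma~\ref{lemma:matroid_amoeba}, $\bar{\mathcal Z}\cap O_\Delta=Z^\circ_\Delta$ is the complement of a hyperplane arrangement in $O_\Delta$, so near $p$ it is cut out inside $O_\Delta$ by $N-n$ linear forms $g_1,\dots,g_{N-n}$ in the torus coordinates with linearly independent differentials at $p$. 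The inductive step is to promote this to a description of $\bar{\mathcal Z}$ in all of $U_{\sigma_\Delta}$: the ideal of $\bar{\mathcal Z}$ near $p$ is generated by $N-n$ functions $G_\ell$ with $G_\ell|_{u=0}=g_\ell$ and with no linear term involving the $u_i$, so that $d_pG_\ell$ lies in the span of the torus differentials and equals $d_pg_\ell$. This is precisely the assertion that the tropicalisation of $\mathcal Z$ transverse to this stratum is the relative fan $\Sigma_\Delta$, which is a Bergman fan because $X$ is a smooth $\Q$-tropical variety; concretely it is read off from the rescaled amoeba computation already used in the proof of Lemma~\ref{lemma:matroid_amoeba}. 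Granting this, the $d_pG_\ell$ are independent and $T_p\bar{\mathcal Z}$ contains the whole $u$-direction $\C^{k+1}$; the implicit function theorem then gives that $\bar{\mathcal Z}$ is smooth at $p$ of dimension $n+1$, and since $T_pD_{\Delta'}$ contains all torus directions for every $\Delta'\succeq\Delta$, transversality to all such strata follows. Running over all mobile faces covers a full neighbourhood of $Z$.

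Granting transversality, the remaining statements are formal. Being a closed complex submanifold of the K\"ahler manifold $P_{\Sigma_X}$ near $Z$, the family $\bar{\mathcal Z}$ is smooth and K\"ahler there; by properness the complement of this neighbourhood maps to a closed subset of $\mathcal D$ avoiding $0$, so after shrinking $\mathcal D$ the whole of $\bar{\mathcal Z}$ is smooth and K\"ahler, and the local models above show the fibres $Z_w$, $w\ne 0$, are smooth as well. Transversality of $\bar{\mathcal Z}$ to the reduced simple normal crossing divisor $P_0$ makes $Z=\bar{\mathcal Z}\cap P_0$ a reduced simple normal crossing divisor in $\bar{\mathcal Z}$, with components $Z_\nu=\bar{\mathcal Z}\cap D_\nu$ and strata $Z_\Delta=\bar{\mathcal Z}\cap D_\Delta$; one has $Z_{\nu_0}\cap\dots\cap Z_{\nu_k}=\bar{\mathcal Z}\cap D_\Delta$, which is non-empty exactly when $\nu_0,\dots,\nu_k$ span a mobile face $\Delta$ of $X$ (non-emptiness of the arrangement complement $Z^\circ_\Delta$ follows from the matroid being loopless). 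For bounded $\Delta$ this identifies the poset of non-empty intersections of the components of $Z$ --- the Clemens complex --- with the subcomplex of finite mobile faces of $X$, which is assertion~(1); for $\Delta$ with non-trivial divisorial cone $I$ it identifies $Z_{\nu_0}\cap\dots\cap Z_{\nu_j}\cap D_I$ with $Z_\Delta$, and transversality to the toric strata makes all such intersections simple normal crossings, which is assertion~(2).

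Finally, assertion~(3) is exactly the content of Lemma~\ref{lemma:matroid_amoeba}, together with the observation that the support of the Bergman fan of the matroid of the arrangement $Z^\circ_\Delta$ coincides with $\Sigma_\Delta$ by construction. The main obstacle is the inductive local computation in the second paragraph: one must show that the smoothness hypothesis on $X$ --- that every relative fan $\Sigma_\Delta$ is supported on a Bergman fan --- forces the defining equations of $\bar{\mathcal Z}$ to degenerate, transversally, to the linear equations of $Z^\circ_\Delta$ along each toric stratum. In other words, one has to see that the Bergman-fan condition is the exact analytic translation of ``transversal to the toric boundary''; this is the step where the genuine geometry happens, the rest being bookkeeping with the correspondence between mobile faces and toric strata.
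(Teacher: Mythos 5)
Your overall architecture is the same as the paper's: everything is reduced to the single claim that $\bar{\mathcal Z}$ is smooth along $Z$ and transversal to the toric strata $D_\Delta$, after which the bookkeeping you describe (the toric boundary of $P_{\Sigma_X}$ is simple normal crossing, so transversality makes $Z$ simple normal crossing; finite mobile faces give the Clemens complex; infinite mobile faces label the intersections with the $D_I$; the K\"ahler form is induced from the quasi-projective smooth toric $P_{\Sigma_X}$; openness of smoothness plus properness lets one shrink the disk; item (3) is Lemma \ref{lemma:matroid_amoeba}) matches the paper's proof essentially verbatim.

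The problem is that you never establish that central claim. You reduce it to the assertion that near $p\in Z^\circ_\Delta$ the ideal of $\bar{\mathcal Z}$ admits $N-n$ generators $G_\ell$ restricting to the linear forms $g_\ell$ on the stratum (and with no linear terms in the $u_i$), you say this ``is precisely'' the statement that the relative tropicalization is the Bergman fan $\Sigma_\Delta$, and you then explicitly defer it as the main obstacle. That identification is not right as stated, and in any case is not proved: knowing only that the relative fan is supported on a Bergman fan gives the expected dimensions of the orbit intersections, but by itself it produces neither such generators nor smoothness of $\bar{\mathcal Z}$ at $p$. What closes this step in the paper is the full strength of Lemma \ref{lemma:matroid_amoeba}, i.e.\ the multiplicity-one statement: the Bergman fan has tropical degree $1$ (Example \ref{example:degree1}), so by Corollary \ref{lemma:degree1} the closure of $Z^\circ_\Delta$ in the compactification $\C\PP^{N-k}$ of $O_\Delta$ is a \emph{linear} subspace; the paper concludes that $\bar{\mathcal Z}$ is locally defined by linear (degree-one) equations, which is what yields smoothness along $Z$ and transversality to every $D_\Delta$. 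You quote the lemma for the orbit-wise description in item (3) but never exploit this linearity to control $\bar{\mathcal Z}$ itself, which is exactly the missing ingredient. Once it is supplied, your descending induction on $\dim\Delta$ is unnecessary (the argument at each stratum is independent of the deeper ones), and the requirement that the $G_\ell$ have no linear term in the $u_i$ is stronger than what transversality needs --- linear independence of the restrictions of $d_pG_\ell$ to the torus directions already suffices.
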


\begin{proof}
Note that $\bar{\mathcal Z}$ is smooth at the points of the central fiber $Z$
 and is transversal to every $D_\Delta$ since it is locally defined by linear
(degree 1) equations by Lemma \ref{lemma:matroid_amoeba}. 

Smoothness is an open condition and hence extends to some neighborhood of $Z$. In the toric variety $P_{\Sigma_X}$ the boundary divisor $\bigcup D_\nu \cup \bigcup D_i$ is simple normal crossing. Then, since $\bar{\mathcal Z}$ intersect each $D_\Delta$ transversally, the divisor $Z$ is also simple normal crossing. The K\"ahler structure on $\bar{\mathcal Z}$ is induced from $P_{\Sigma_X}$.
\end{proof}

\subsection{Limiting mixed Hodge structure}
We continue assuming that $X$ is unimodularly triangulated.  Denote by $X^{(k)}$ the collection of finite mobile $k$-faces of $X$, and let $Z^{(k)}=\bigsqcup_{\Delta\in X^{(k)}} Z_{\Delta}$ be the disjoint union of the $(k+1)$-intersections of components in $Z$.

The weight spectral sequence (cf. \cite{Steen}, Ch. 11) associated to $\mathcal Z$ has the first term
$${E}_1^{r, k-r}=\bigoplus_{l\ge \max\{0, r\}} H^{k+r-2l}(Z^{(2l-r)};\Q)[r-l], 
$$
where $[r]$ means the $r$-th Tate twist. 

The odd rows in $E_1$ are zero, we disregard them. Then, we dualize and make shifts of indices in the even rows, so that the relabeled term now reads
$$\tilde{E}^1_{q,p}:=\Hom ({E}_1^{q-p, 2p}; \Q)
= \bigoplus_{\tilde l=0}^{\min\{p, q\}} H_{2\tilde l}(Z^{(p-2 \tilde l+q)}; \Q)[p- \tilde l].
$$
Here is the beginning of the new $\tilde{E}^1$ term (where $H_{2l}(k)[r]$ means $H_{2l}(Z^{(k)}; \Q)[r]$):
$$ \xymatrix{
 H_0(2)[2] &
 \txt{$H_0(3)[2] $ \\ $\oplus H_2(1)[1]$}  \ar[l]_{d} &
 \txt{$H_0(4)[2] $ \\ $\oplus H_2(2)[1]$ \\ $\oplus H_4(0) $}  \ar[l]_{d} &  &
\\
H_0(1)[1] &
 \txt{$H_0(2) [1]$ \\ $\oplus H_2(0)$}  \ar[l]_{d}  &
 \txt{$H_0(3) [1]$ \\ $\oplus H_2(1)$}  \ar[l]_{d}  &
 \txt{$H_0(4) [1]$ \\ $\oplus H_2(2)$}  \ar[l]_{d} &
\\
H_0(0) & H_0(1) \ar[l]_{d}   & H_0(2) \ar[l]_{d}  & H_0(3)  \ar[l]_{d} & H_0(4) \ar[l]_{d}
 }
$$
The differential $d=i_*+\gys$ consists of the pushforward map $i_*$ and the Gysin map
(see, e.g., \cite{Zo06}, p. 231): 
\begin{equation}\label{eq:d1}
\begin{split}
 i_* :H_{2l}(k)[r] & \to H_{2l}(k-1)[r] \\
 \gys: H_{2l}(k)[r] & \to H_{2l-2}(k+1)[r+1].
\end{split}
\end{equation}

\begin{theorem}\label{theorem:main}
For each $p$ the row complex $(\tilde{E}^1_{\bullet,p}, d)$ in the weight spectral sequence
is quasi-isomorphic to the tropical cellular chain complex $C_\bullet(X;\F_p)$. 
In particular,  $\tilde{E}^2_{q,p}\cong H_{q}(X;\F_p)$.
\end{theorem}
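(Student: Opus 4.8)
The strategy is to construct, for each fixed $p$, an explicit chain-level map $\Psi\colon C_\bullet(X;\F_p)\to\tilde E^1_{\bullet,p}$ (or a zig-zag through an intermediate double complex) and show it induces an isomorphism on homology. The starting point is the observation that both complexes decompose according to the faces of $X$. On the tropical side, $C_q(X;\F_p)=\bigoplus_{\dim\Delta=q}\F_p(\Delta)$ where $\F_p(\Delta)={}^\Z\F_p(\Sigma(x))\otimes\Q$ for $x$ in the relative interior of $\Delta$. On the Steenbrink-Illusie side, $\tilde E^1_{q,p}=\bigoplus_{\tilde l}H_{2\tilde l}(Z^{(p-2\tilde l+q)};\Q)[p-\tilde l]$, and each $Z^{(k)}$ is a disjoint union of the $Z_\Delta$ over finite mobile $k$-faces $\Delta$. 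By Proposition \ref{thm:snc}(3), each relatively open stratum $Z^\circ_\Delta$ is the complement of a hyperplane arrangement whose matroid $M_\Delta$ has Bergman fan equal to the relative fan $\Sigma_\Delta$. Hence by Theorem \ref{OS} we have $H_{2\tilde l}(Z^\circ_\Delta;\Q)\cong{}^\Z\F_{2\tilde l}(\Sigma_\Delta)\otimes\Q=\bar\F_{2\tilde l}(\Delta)$ — the relative coefficient group at $\Delta$. So the summands of $\tilde E^1_{q,p}$ are, after this identification, built out of the groups $\bar\F_j(\Delta)$ for various faces $\Delta$ and degrees $j$.

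First I would set up the bookkeeping: organize $\tilde E^1_{\bullet,p}$ as the total complex of a double complex indexed by (mobile face $\Delta$ of $X$) $\times$ (how much smaller the sedentarity/star-direction index is), where one differential is $i_*$ (restriction to codimension-one subfaces of the SNC stratification, i.e. faces $\Delta'\succ_1\Delta$) and the other is the Gysin map $\gys$ (which lowers $H_{2l}$ by two and records a normal direction, corresponding geometrically to degenerating along a divisorial/sedentary direction). The key local computation is to identify, at a single mobile face $\Delta_0$ with its family $\Pi(\Delta_0)$ of sedentary descendants indexed by subsets of $J$ (Proposition \ref{prop:poset}(3)), the Gysin column with a Koszul-type complex that resolves $\F_p(\Delta)$ — the genuine coefficient group at a face of $X$, which involves vectors from the full tangent cone $\Sigma(x)$, i.e. both the Bergman-fan directions and the divisorial directions towards $-\infty$. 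Concretely, $\F_p(\Delta)$ should be recovered as the homology of a complex whose terms are $\bigoplus \bar\F_{p-|I|}(\Delta_I)$ with $I$ ranging over divisorial directions, the differential wedging in the divisorial vectors; this is exactly what the $\gys$-differential does on $\bigoplus_{\tilde l}H_{2\tilde l}(Z^{(k)})[p-\tilde l]$ after the Orlik–Solomon identification. Establishing this local quasi-isomorphism — essentially a statement that the Orlik–Solomon algebra of a matroid together with its "deletion" maps computes the larger coefficient space — is the technical heart.

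Once the local (Gysin-direction) quasi-isomorphism $\tilde E^1_{\bullet,p}\simeq \bigoplus_{\Delta}\F_p(\Delta)$ is in place, filtering by face dimension, the remaining $i_*$-differential must be matched with the tropical cellular boundary $\dd$ (the cellular boundary combined with the cosheaf maps $\iota\colon\F_p(\Delta')\to\F_p(\Delta)$ of \eqref{eq:boundary_map}). This requires checking that under the Orlik–Solomon identification the Gysin/pushforward $i_*$ between strata of $Z$ corresponds to the natural restriction $\iota$ induced by inclusion of relative fans, together with the correct incidence signs; this is a naturality/compatibility check, somewhat lengthy but conceptually routine once the dictionary "face $\Delta$ $\leftrightarrow$ stratum $Z_\Delta$, relative fan $\leftrightarrow$ matroid, $\F$ $\leftrightarrow$ OS-homology" is fixed. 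A spectral-sequence comparison argument (filter both total complexes by the face-dimension grading; the local quasi-isomorphism identifies $E^1$-pages; conclude convergence to the same homology) then yields $\tilde E^2_{q,p}\cong H_q(X;\F_p)$.

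\textbf{Main obstacle.} The hard part will be the local statement: showing that the Gysin-direction subcomplex of $\tilde E^1_{\bullet,p}$ at a mobile face $\Delta_0$ — a sum of Orlik–Solomon homology groups of the matroids $M_{\Delta_I}$ over the Boolean lattice $\Pi(\Delta_0)$, with the Gysin maps — is a resolution of the coefficient group $\F_p(\Delta)$ of the (possibly sedentary) face $\Delta$ of $X$, with all other homology vanishing. This amounts to a purely combinatorial assertion about matroids and their truncations/deletions along the flats corresponding to divisorial directions, and getting the vanishing (acyclicity in the wrong degrees) and the sign conventions exactly right — especially keeping track of how sedentarity interacts with the Tate twists $[p-\tilde l]$ — is where the real work lies. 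Everything else is functoriality and a standard double-complex/spectral-sequence comparison.
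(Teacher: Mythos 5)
Your overall strategy---interpolating between the two complexes through a double complex in which the Gysin direction is controlled by the Orlik--Solomon/purity input and the $i_*$ direction is matched with the cellular boundary---is the right instinct and is the shape of the paper's argument, but two of your concrete steps fail as stated. First, the summands of $\tilde E^1_{\bullet,p}$ are homologies $H_{2l}(Z_\Delta)$ of the \emph{closed} strata, while Theorem \ref{OS} identifies only the homology of the \emph{open} strata $Z^\circ_\Delta$ with $\bar\F_\bullet(\Delta)$; for instance, for a vertex $\nu$ of a tropical curve one has $H_2(Z_\nu)=\Q$ but $\bar\F_2(\nu)=0$. The true relation between the closed-stratum groups and the $\bar\F$'s is not a chain-level identification of summands but the exactness of the Gysin complex $H_{2p}(Z_\Delta)\to\bigoplus_{\Delta'\succ_1\Delta}H_{2p-2}(Z_{\Delta'})\to\cdots\to H_0(Z^{(p)}|_{\succ\Delta})\to\bar\F_p(\Delta)\to0$, coming from Deligne's weight spectral sequence together with Shapiro's purity; this is exactly the input used in the paper (proof of Proposition \ref{prop:gysin}), and it cannot be upgraded to a decomposition of $\tilde E^1$ into $\bar\F$'s. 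Relatedly, the Gysin part of $\tilde E^1$ does not split face-locally: each $H_{2l}(Z_{\Delta'})$ occurs once, yet it would be needed in the local complex of every face below $\Delta'$. Your proposed local statement is also not the correct one: since a sedentary face has the same relative fan as its parent, $\bar\F_{p-|I|}(\Delta_I)=\bar\F_{p-|I|}(\Delta_0)$, and a complex with these terms and ``wedging in divisorial vectors'' does not compute $\F_p(\Delta)$. What is true, and what the paper uses, is that $\F_p(\Delta)$ carries a filtration whose graded pieces are $\bar\F_{p-r}(\Delta)\otimes W_r(\Delta)$, with $W_r(\Delta)$ the $r$-polyvectors along $\Delta$ itself (Lemma \ref{lemma:filtration}).

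Second, and most seriously, your plan has no mechanism for the sedentary and infinite faces of $X$: these contribute chain groups $\F_p(\Delta)$ to $C_\bullet(X;\F_p)$, and the cellular boundary has sedentarity-raising components (projections along divisorial directions), but there are no corresponding strata of $Z$, hence no summands of $\tilde E^1$ and no $i_*$ maps to match them against. The paper resolves precisely this by introducing the auxiliary double complex $K^{(p)}_{\bullet,\bullet}$, whose terms $H_{2l}(Z_{\Delta'})\otimes W_{p-l}(\Delta')$ are spread over \emph{all} faces $\Delta\prec\Delta'$ modulo the identification $\beta\otimes\iota(w)\sim i_*(\beta)\otimes w$, and by the Koszul-type residue Lemma \ref{lemma:koszul}: the residue complexes of face families are acyclic for infinite faces and resolve $\Q$ for finite mobile ones. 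That acyclicity is what makes the sedentary and infinite contributions disappear on the Steenbrink--Illusie side and collapses the polyvector factors so that only finite mobile faces survive in $\tilde E^1$. Without the polyvector bookkeeping and this acyclicity statement, ``take Gysin homology first, then match $i_*$ with $\dd$'' cannot be completed; you should either import such an intermediate complex with its two filtrations (your parenthetical zig-zag, made precise) or supply an equivalent device. Once that is in place, the remaining items in your sketch (signs, naturality of $i_*$ versus $\iota$, the comparison of spectral sequences) are indeed as routine as you anticipate.
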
 

The proof of Theorem \ref{theorem:main} is presented in the next section.

\begin{proof}[Proof of Theorem \ref{thm:main} and Corollary \ref{cor:main}]
The weight spectral sequence degenerates at $E_2$ abutting to cohomology of the canonical fiber $\mathcal Z_\infty$ with the monodromy weight filtration (cf. \cite{Steen}, Ch. 11).
Thus, Theorem \ref{thm:main} follows from Theorem \ref{theorem:main}. 

Since all closed strata in $Z$ are composed of complements of hyperplanes all even cohomology groups $H^{2i}(Z_\Delta;\Q)$ are of $(i,i)$-type (cf. e.g. \cite{DKh}). Cohomology in odd degrees vanish. In particular, the MHS is of Hodge-Tate type: in the weight filtration only even associated graded pieces are non-trivial and each contains only Hodge $(p,p)$-type. Thus, the weight filtration calculates the Hodge numbers of the canonical fiber (and hence also of a smooth fiber). That is, $h^{p,q} (Z_w) = \dim E_2^{q-p, 2p}= \dim \tilde{E}^2_{q,p} = \dim H_{q}(X;\F_p)$.
\end{proof}


\section{Proof of Theorem \ref{theorem:main}}
The proof goes as follows. For each $p$ we introduce a double complex $(K^{(p)}_{\bullet, \bullet}, \partial, \delta)$ and calculate homology of the total complex $(K^{(p)}_{ \bullet}, \partial+\delta)$ in two ways. First, we take the $\delta$-homology, and recover the tropical cellular chain complex $C_\bullet(X;\F_p)$ (see Proposition \ref{prop:gysin}). Second, 
we introduce a filtration $F_m$ on $(K^{(p)}_{ \bullet}, \partial+\delta)$  
such that the first term of the resulting spectral sequence is a single row which coincides with 
the weight spectral sequence complex $(\tilde{E}^1_{\bullet,p}, i_*+\gys)$ (see Proposition \ref{prop:SI}). 
In the notations below
\begin{equation}\label{eq:E^1}
\tilde{E}^1_{m,p}=\oplus_{l=0}^{\min \{m,p\}} \bigoplus H_{2l}(\Delta),
\end{equation}
where $\Delta$ runs over finite mobile $(p-2l+m)$-dimensional faces of $X$, and we disregard the Tate twist. 
Theorem  \ref{theorem:main} then follows.

\subsection{Notations}
From now on we assume that all faces of $X$ are oriented. Recall that to any mobile face $\Delta$ of $X$ we can associate $Z_\Delta$, the closed subset of $Z$ which is the intersection of the corresponding components of $Z$ and some toric divisors in $P_{\Sigma_X}$. Recall also our notation $\Delta \prec^s_j \Delta'$ (and $\Delta' \succ^s_j \Delta$) when $\Delta$ is a face of $\Delta'$ of codimension $j$ and cosedentarity $s$.
We omit the superscript $s$ in case $s=0$. Here are some more notations.

\begin{itemize}
\item If $\Delta$ is a mobile face of $X$, we set $H_{2l}(\Delta) : = H_{2l}(Z_{\Delta};\Q)$.
\item For $\Delta'\succ_1\Delta$, a consistently oriented pair of mobile faces, $i_*: H_{2l}(\Delta') \to H_{2l}(\Delta)$ is the pushforward map, and $\gys:  H_{2l}(\Delta) \to H_{2l-2}(\Delta')$ is the Gysin map.
\item If $\Delta$ is sedentary, we set $H_{2l}(\Delta):=H_{2l}(\Delta_0)$, where $\Delta_0$ is the parent of $\Delta$.
\item For any face $\Delta$ of $X$ we set $W_r(\Delta) : = \wedge^r \Q\<\Delta\>$ to be the space of rational $r$-polyvectors in the linear span of $\Delta$.
\end{itemize}

We extend the meaning of the pushforward and the Gysin maps for sedentary faces.
If $\Delta'\succ_1\Delta$ then the pushforward  $i_*: H_{2l}(\Delta') \to H_{2l}(\Delta)$ is the same as $i_*: H_{2l}(\Delta'_0)\to H_{2l}(\Delta_0)$ for their parents (under the identifications $H_{2l}(\Delta)=H_{2l}(\Delta_0)$ and $H_{2l}(\Delta')=H_{2l}(\Delta_0')$). And similar for the Gysin map $\gys:  H_{2l}(\Delta) \to H_{2l-2}(\Delta')$.

If $\Delta'\succ^1_1\Delta$, then $i_*: H_{2l}(\Delta') \to H_{2l}(\Delta)$ 
is the identity (both groups equal $H_{2l}(\Delta_0)$). 
The Gysin map $\gys:  H_{2l}(\Delta) \to H_{2l-2}(\Delta')$ in this case is zero.

For a pair $\Delta'\succ_{j+s}^s\Delta$ we define the {\em residue} map
$$\shad_{\Delta'\succ\Delta}: W_r(\Delta')\to W_{r-j}(\Delta)
$$
as follows. Let $\Delta''$ be the smallest face between $\Delta$ and $\Delta'$ of the same sedentarity as $\Delta'$. 
Then, there is a canonical primitive covolume $j$-form $\Omega_{\Delta'\succ_j\Delta''}$ in $\Delta'$ 
which vanishes on polyvectors divisible by vectors in $\Delta''$. Now for a polyvector $w \in W_r (\Delta')$ we define its residue 
$\shad_{\Delta'\succ\Delta}(w) \in W_{r-j}(\Delta)$  
to be the projection to $W_{r-j}(\Delta)$ of the evaluation of $w$ on $\Omega_{\Delta'\succ_j\Delta''}$ (It is zero if $r<j$). 

The most important cases are $\Delta'\succ_1\Delta$ and $\Delta'\succ^1_1\Delta$. In the first case $\shad(w)$ is the evaluation of $w$ on the canonical linear from which defines the facet $\Delta\prec \Delta'$. In the second case $\shad(w)$ is just the projection.

Let $\Delta$ be a $k$-dimensional face of $X$, maybe infinite and sedentary. It will be convenient to treat vertices and divisorial vectors which span $\Delta$ on an equal footing. Namely, for $\Delta$ we write a sequence $(\nu_0 \nu_1\dots \nu_k)$, where  $\nu_0$ is a vertex and each $\nu_{j \ne 0}$ may denote a vertex or a divisorial vector of $\Delta$. 

The orientation of $\Delta=(\nu_0 \nu_1\dots \nu_k)$ is encoded in the sign order of the sequence. 
To get consistent signs in the Gysin and pushforward  maps for the differential in the weight spectral sequence we use the following convention. For the Gysin map we add a new vertex (or a divisorial vector) at the end of the old sequence. For the pushforward  map we remove the last element (after reordering the sequence if needed).

\begin{lemma}\label{lemma:commute}
For a class $\beta\in H_{2l}(\Delta)$ we denote by $\beta^q$ its image in $H_{2l-2}(\Delta \nu_q)$ under the Gysin map, and by  $\beta_j$ its image in $H_{2l}(\Delta \setminus \nu_j)$ under the pushforward  map. Then
\begin{gather*}
(\beta_j)_i=(\beta_i)_j, \quad  (\beta^r)^q=(\beta^q)^r, \\ 
(\beta_j)^q=(\beta^q)_j, \quad \sum_q \beta^q_q + \sum_j \beta_j^j = 0,
\end{gather*}
where the sums in the last identity are over all $\nu_j \in \Delta$ and all ${\nu_q\in\Link(\Delta)}$. 
{\rm (}Here and later ${\nu_q\in\Link(\Delta)}$ means $(\Delta \nu_q) \succ_1 \Delta${\rm )}. 
\end{lemma}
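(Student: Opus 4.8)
The plan is to reduce all four identities to standard compatibilities between pushforward and Gysin (= refined intersection/residue) maps in the homology of smooth complex varieties, applied to the stratification of $Z$ by the smooth strata $Z_\Delta$. Recall from Proposition~\ref{thm:snc} that $Z$ is a simple normal crossing divisor, so for $\Delta'\succ_1\Delta$ the stratum $Z_\Delta$ is a smooth divisor inside $Z_{\Delta'}$ (or, in the sedentary case, both $Z_\Delta$ and $Z_{\Delta'}$ are literally equal to $Z_{\Delta_0}$ for a common parent, and the maps are either an identity or zero by the conventions just stated). The first three identities are ``commuting square'' statements, and the last is the key ``global'' relation expressing that $d^2=0$ for the Steenbrink--Illusie differential $d=i_*+\gys$ restricted to a single class.

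First I would dispose of the degenerate cosedentarity cases. When one of the two steps is of cosedentarity $1$ (i.e.\ involves $\Delta'\succ^1_1\Delta$), the pushforward is an identity map and the corresponding Gysin is zero; substituting this into each of the four identities makes them trivial or reduces them to an identity already covered by the purely mobile case. So from now on assume all faces in sight are mobile (equivalently, work with their parents via the identifications $H_{2l}(\Delta)=H_{2l}(\Delta_0)$), and use that $Z_\Delta$ is an honest smooth divisor in $Z_{\Delta'}$ whenever $\Delta'\succ_1\Delta$.

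Next I would establish the three commutation relations. The identity $(\beta_j)_i=(\beta_i)_j$ is just functoriality of pushforward for the composite smooth inclusions $Z_{\Delta\setminus\{\nu_i,\nu_j\}}\hookrightarrow Z_{\Delta\setminus\nu_j}\hookrightarrow Z_\Delta$, together with the sign convention (removing two elements in the two possible orders differs by the transposition, which is absorbed into the chosen orientations). Dually, $(\beta^r)^q=(\beta^q)^r$ is the compatibility of iterated Gysin maps: $Z_{\Delta}$ sits in the two smooth divisors $Z_{\Delta\nu_q}$ and $Z_{\Delta\nu_r}$ of $Z$ meeting transversally in the codimension-$2$ stratum $Z_{\Delta\nu_q\nu_r}$, so the two Gysin push-ups agree, again up to the orientation sign from adjoining $\nu_q,\nu_r$ in either order. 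The mixed relation $(\beta_j)^q=(\beta^q)_j$ is the base-change/projection-formula compatibility: removing $\nu_j$ (restricting to the smaller stratum) commutes with the Gysin map for the transverse divisor indexed by $\nu_q$, which holds because the square of strata
$$\xymatrix{Z_{\Delta\nu_q}\ar[d] & Z_{(\Delta\setminus\nu_j)\nu_q}\ar[l]\ar[d]\\ Z_\Delta & Z_{\Delta\setminus\nu_j}\ar[l]}$$
is Cartesian and transverse. All three are standard once the SNC structure from Proposition~\ref{thm:snc} is in place; the only real bookkeeping is matching signs to the stated orientation convention (append for Gysin, delete-last for pushforward).

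The main obstacle, and the only genuinely non-formal point, is the last identity $\sum_q \beta^q_q + \sum_j \beta_j^j = 0$. The plan is to interpret the class $\beta\in H_{2l}(Z_\Delta)$ via its (complex) fundamental cycle and to recognize $\beta^q_q$ and $\beta_j^j$ as the two types of contributions to the self-intersection of $Z_\Delta$ inside the ambient SNC variety, i.e.\ to the normal bundle relation. Concretely, the normal bundle of $Z_\Delta$ in $Z$ (as a stratum of the SNC divisor), or rather the relevant first Chern class, decomposes as a sum over the divisorial directions $\nu_j$ spanning $\Delta$ and over the link directions $\nu_q\in\Link(\Delta)$, and the vanishing of a suitable linear-equivalence relation among the toric boundary divisors $D_\nu$ of $P_{\Sigma_X}$ (these are the principal divisors $\operatorname{div}(\chi^m)=\sum_\nu\langle m,\nu\rangle D_\nu$ coming from characters) yields exactly $\sum_q \beta^q_q + \sum_j \beta_j^j = 0$ after capping with $\beta$. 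I would make this precise by choosing a generic character $m$ that pairs to $1$ with each relevant primitive generator and to $0$ elsewhere, so that its principal divisor restricts on $Z_\Delta$ to $\sum_q Z_{\Delta\nu_q}-\sum_j(\text{divisor cut by }\nu_j)$; capping with $\beta$ and applying the already-proved commutations converts this into the stated sum. Alternatively, and perhaps more cleanly, this is precisely the statement that the composite $d\circ d=0$ on the line $H_{2l}(\Delta)\to\bigoplus H_{2l-2}(\cdots)\to\bigoplus H_{2l-4}(\cdots)$ and $H_{2l}(\Delta)\to\bigoplus H_{2l}(\cdots)\to\bigoplus H_{2l-2}(\cdots)$ cancels against each other — which is part of the general fact that the Steenbrink--Illusie $E_1$-differential squares to zero; so one may simply invoke \cite{Steen}, Ch.~11, for the single-class version and record the sign normalization. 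I would present the toric linear-equivalence argument as the primary proof since it is self-contained given Proposition~\ref{thm:snc}, and mention the spectral-sequence route as a cross-check.
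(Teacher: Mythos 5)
Your treatment of the first three identities is fine and matches the content of the paper's own argument: the paper simply writes them as the components of $i_*^2=0$, $\gys^2=0$ and of the off-diagonal part of the anticommutation $i_*\gys+\gys\, i_*=0$ for the normal crossing configuration of Proposition \ref{thm:snc}, which is exactly the functoriality, iterated-Gysin and transverse-base-change facts you spell out. The paper's proof of the fourth identity is also just the diagonal component of $i_*\gys+\gys\, i_*=0$; it does not attempt the self-contained toric argument you propose, so the real question is whether your primary argument for the last identity works.

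It does not, as written, and the gap is exactly at the step ``choose a generic character $m$ that pairs to $1$ with each relevant primitive generator and to $0$ elsewhere.'' The rays of $\Sigma_X$ whose divisors meet $Z_\Delta$ are of two kinds: the rays through $(\nu,1)$ for the vertices $\nu$ of $\Delta$ and of its link, and the horizontal rays of the asymptotic fan corresponding to the divisorial directions in $\Delta$ and in $\Link(\Delta)$ (these occur because the sums in the lemma run over \emph{all} $\nu_j\in\Delta$ and $\nu_q\in\Link(\Delta)$, including divisorial vectors, whose strata $Z_{\Delta\nu_q}$ are intersections with the horizontal toric divisors $D_i$). A character equal to $1$ on all the vertex rays is essentially forced to be the last coordinate $w$ (any other choice requires the relevant vertices to lie on a common affine hyperplane), and $w$ pairs to $0$, not $1$, with every horizontal ray; so no single character produces the divisor relation you need once divisorial directions are present. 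What the principal divisor $\operatorname{div}(w)=\sum_\nu D_\nu$ does give, after capping with $\beta$ and using the self-intersection formula, is the identity with both sums restricted to \emph{vertex} directions only; the contributions $\beta^q_q$ and $\beta^j_j$ coming from divisorial directions (restrictions of $c_1(\mathcal O(D_i))$ to $Z_\Delta$) are not controlled by any principal toric divisor, and they are precisely the new feature of this setting. Your fallback, citing \cite{Steen}, Ch.~11, has the same limitation: the classical Steenbrink--Illusie relation concerns the components of the central fiber and their intersections, i.e.\ the finite mobile faces, whereas the lemma is stated (and used, e.g.\ in the well-definedness of $\delta$) for faces whose links contain divisorial directions, as well as under the sedentary conventions. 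So the degenerate cosedentarity cases you dispose of at the start are not the only ones outside the classical statement; the infinite mobile faces are the genuinely problematic ones, and your argument needs a separate mechanism (not a single character, and not the bare citation) to account for the horizontal terms before the last identity can be considered proved.
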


\begin{proof}
The first two identities follow from writing $i_*^2=0$ and $\gys ^2 = 0$ in components. 

The other two follow from the (anti-)commutative diagram $i_* \gys +\gys  i_*=0$:
\begin{equation}
\xymatrix{
H_{2l} (\nu_0\dots \nu_k) \ar[d]^{\gys} \ar[r]^{i_*} & \sum_j H_{2l} (\nu_0\dots \hat \nu_j \dots \nu_k) \ar[d]^{\gys}\\
\sum_q H_{2l-2} (\nu_0\dots \nu_k\nu_q) \ar[r]^--{i_*} & H_{2l-2} (\nu_0\dots \nu_k) \oplus \sum_{j, q} H_{2l-2} (\nu_0\dots \hat \nu_j \dots \nu_k\nu_q)
}
\label{eq:gysin}
\end{equation}
In $H_{2l-2} (\nu_0\dots \hat \nu_j \dots \nu_k\nu_q)$ we have $(\beta_j)^q-(\beta^q)_j=0$ (with our sign convention).
The last identity 
$$
\sum_q \beta^q_q + \sum_j \beta_j^j = 0
$$
takes place in $H_{2l-2} (\nu_0\dots \nu_k)$. The meanings of $\beta_j^j$ and $\beta^q_q$ are unambiguous: they only makes sense in one order: $\beta_j^j = (\beta_j)^j$ and $\beta_q^q= (\beta^q)_q$. 
\end{proof}

\subsection{The double complex $(K^{(p)}_{\bullet, \bullet}, \partial, \delta)$}
For any face $\Delta\in X$ we set
$$A^{(p)}_l(\Delta)=\bigoplus_{\Delta'\in \Star (\Delta)} H_{2l}(\Delta')\otimes W_{p-l}(\Delta')/\sim
$$
where the equivalence is defined as follows.
For a pair of simplices $\Delta''\prec_j \Delta'$ we have two maps $i_*$ and $\iota$ (the inclusion of polyvectors) going into opposite directions:
$$\xymatrix{
H_{2l}(\Delta'')  \otimes W_{p-l}(\Delta'') \ar@<2pc>[d]^{\iota} \\
H_{2l}(\Delta') \ar@<2pc>[u]^{i_*} \otimes W_{p-l}(\Delta')
}
$$
We identify elements $\beta\otimes\ \iota (w) \in H_{2l}(\Delta')\otimes W_{p-l}(\Delta')$ and $i_*(\beta) \otimes w \in H_{2l}(\Delta'')\otimes W_{p-l}(\Delta'')$.

\begin{remark}
To generate the space $A^{(p)}_j(\Delta)$ it is sufficient to consider $\Delta'\succ_j\Delta$ with $j$ between 0 and $p-l$. 
\end{remark}

Now we set
$ K^{(p)}_{k,l}:=\bigoplus_{\dim\Delta=k} A^{(p)}_l(\Delta),
$
and define two differentials
$$\partial: K^{(p)}_{k,l} \to K^{(p)}_{k-1,l},\quad \delta: K^{(p)}_{k,l} \to K^{(p)}_{k,l-1}
$$
as follows. 

The horizontal differential $\partial$ acts essentially just as the boundary map on the cell complex $X$.
Namely, if  $\Delta''\prec_1\Delta$ is a consistently oriented pair, 
then $\partial: A^{(p)}_l(\Delta) \to A^{(p)}_l(\Delta'')$ 
acts on representatives $\beta\otimes w \in H_{2l}(\Delta')\otimes W_{p-l}(\Delta')$ as the identity. 
Clearly, it respects the equivalence. 

If  $\Delta''\prec^1_1\Delta$, then by Proposition  \ref{prop:poset} for any face $\Delta' \in \Star (\Delta)$
there is a corresponding face $\Delta'''\in \Star (\Delta'')$  in the same family as $\Delta'$. That is, $H_{2l}(\Delta''')=H_{2l}(\Delta')$, and we let $\pi:  W_{p-l}(\Delta') \to W_{p-l}(\Delta''')$ be the natural projection. Then, 
we define the image of $\beta\otimes w$ in $A^{(p)}_l(\Delta'')$ to be $\beta\otimes \pi(w)$. 
Since inclusions $\iota$ for the spaces $W_{p-l}(\bullet)$ commute with the projections $\pi$, 
any equivalence between representatives $\beta\otimes w$ in $A^{(p)}_l(\Delta)$ 
also holds for their images in $A^{(p)}_l(\Delta'')$.

We define the vertical differential $\delta: A^{(p)}_l(\Delta) \to A^{(p)}_{l-1}(\Delta)$ by a certain combination of the pushforward $i_*$ and  the Gysin maps. Namely, given $\beta\otimes w \in H_{2l}(\Delta')\otimes W_{p-l}(\Delta')$ we first choose a representing sequence $(\nu_0 \nu_1\dots \nu_{k'})$ for $\Delta'$ (in particular, a reference vertex $\nu_0\in \Delta'$) and then set
\begin{equation}\label{eq:vertical}
\delta(\beta\otimes w) := \sum_{\nu_q\in\Link(\Delta')} \beta^q \otimes ({\nu_{0q}}\wedge w) + \sum_{\nu_j\in \Delta'} \beta_j^j \otimes (\nu_{0 j}\wedge w).
\end{equation}
Here, $\nu_{0q}$ means the vector $(\nu_0 \nu_q)$ if $\nu_q$ is a vertex, and $\nu_{0q} = \nu_q$ if $\nu_q$ is a divisorial vector, and similar for $\nu_j$. Note that the first sum is in $\sum_q H_{2l-2}(\Delta' \nu_q)\otimes W_{p-l+1}(\Delta' \nu_q)$ 
and the second sum is in $H_{2l-2}(\Delta')\otimes W_{p-l+1}(\Delta')$, 
but both represent elements in $A^{(p)}_{l-1}(\Delta)$. 

\begin{lemma}
The map $\delta: A^{(p)}_j(\Delta) \to A^{(p)}_{j-1}(\Delta)$ is well defined.
\end{lemma}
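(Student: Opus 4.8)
The plan is to verify that the right-hand side of \eqref{eq:vertical} is independent of the two choices hidden in its definition: the choice of a representative $\beta\otimes w\in H_{2l}(\Delta')\otimes W_{p-l}(\Delta')$ of a class in $A^{(p)}_l(\Delta)$, and the choice of a reference vertex $\nu_0$ in the representing sequence $(\nu_0\nu_1\dots\nu_{k'})$ of $\Delta'$. The only tools needed are the defining equivalence of $A^{(p)}_\bullet(\Delta)$ and the four identities of Lemma \ref{lemma:commute}; note that the faces $\Delta'\nu_q$ and $\Delta'$ appearing on the right of \eqref{eq:vertical} all lie in $\Star(\Delta)$, so there is no issue with the target.

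\textbf{Independence of the reference vertex.} First I would fix $\Delta'\in\Star(\Delta)$ and the representative $\beta\otimes w$, and compare the expressions $\delta_{\nu_0}$ and $\delta_{\nu_1}$ obtained from two vertices $\nu_0,\nu_1$ of $\Delta'$. Replacing $\nu_0$ by $\nu_1$ changes $\nu_{0q}$ to $\nu_{1q}$; when $\nu_q$ is a divisorial vector this is no change, and when $\nu_q$ is a vertex the two differ by the fixed vector $\nu_{01}\in\Q\<\Delta'\>$. Hence $\nu_{01}\wedge w\in W_{p-l+1}(\Delta')$ factors out of the difference. For a link index $\nu_q$, the summand $\beta^q\otimes(\nu_{01}\wedge w)$ lives in $H_{2l-2}(\Delta'\nu_q)\otimes W_{p-l+1}(\Delta'\nu_q)$, with $\nu_{01}\wedge w$ pulled back along $\Delta'\prec_1\Delta'\nu_q$; so the defining equivalence of $A^{(p)}_{l-1}(\Delta)$ rewrites it as $\beta^q{}_q\otimes(\nu_{01}\wedge w)$ in $H_{2l-2}(\Delta')\otimes W_{p-l+1}(\Delta')$, and the interior summands already have this shape with coefficient $\beta_j^j$. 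Therefore $\delta_{\nu_0}(\beta\otimes w)-\delta_{\nu_1}(\beta\otimes w)=\bigl(\sum_{\nu_q\in\Link(\Delta')}\beta^q{}_q+\sum_{\nu_j\in\Delta'}\beta_j^j\bigr)\otimes(\nu_{01}\wedge w)$, which vanishes by the last identity of Lemma \ref{lemma:commute}.

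\textbf{Independence of the representative.} Since the relation $\sim$ is generated by its codimension-one instances, it is enough to take $\Delta''\prec_1\Delta'$ with both faces in $\Star(\Delta)$, write $\Delta'=\Delta''\nu_*$, and — using the previous step — arrange the sequences so the reference vertex $\nu_0$ lies in $\Delta''$. One must then check that \eqref{eq:vertical} applied to $\beta\otimes\iota(w)$ on the face $\Delta'$ (with $\beta\in H_{2l}(\Delta')$, $w\in W_{p-l}(\Delta'')$) agrees with \eqref{eq:vertical} applied to $i_*(\beta)\otimes w$ on the face $\Delta''$, as elements of $A^{(p)}_{l-1}(\Delta)$. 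I would treat the cosedentary case $\Delta''\prec^1_1\Delta'$ separately: there $i_*$ for the pair $(\Delta'',\Delta')$ is the identity, the associated Gysin map vanishes, and the polyvector side differs only by the projection $\pi$, so the two formulas match termwise once polyvectors are projected. In the equi-sedentary case $\Delta''\prec^0_1\Delta'$, I would expand both sides, use the defining equivalence to move every summand into one of the groups $H_{2l-2}(\Theta)\otimes W_{p-l+1}(\Theta)$, $\Theta\in\Star(\Delta)$, and pair them off: the summands of $\delta(i_*\beta\otimes w)$ indexed by $\nu_q\in\Link(\Delta'')$ with $\Delta''\nu_q\neq\Delta'$ match summands of $\delta(\beta\otimes\iota(w))$ via the commuting identities $(\beta_*)^q=(\beta^q)_*$ and $(\beta_*)_j=(\beta_j)_*$, while the distinguished index $\nu_q=\nu_*$ on one side is matched against the extra wedge factor $\nu_{0\nu_*}$ produced by $\iota(w)$ on the other, using $(\beta_j)^q=(\beta^q)_j$ together with the sign convention (append a new element for $\gys$, delete the last element for $i_*$).

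The main obstacle will be precisely this last sign/index bookkeeping in the equi-sedentary case: tracking orientations when a face gains or loses $\nu_*$ while a wedge factor $\nu_{01}$ or $\nu_{0\nu_*}$ is simultaneously introduced, and confirming that the diagonal contribution $(\beta_{\nu_*})^{\nu_*}$ is reproduced (and no spurious term survives) with the help of the identity $\sum_q\beta^q{}_q+\sum_j\beta_j^j=0$. Everything else is a direct application of the equivalence relation and of the componentwise forms of $i_*^2=0$, $\gys^2=0$, and $i_*\gys+\gys i_*=0$ recorded in Lemma \ref{lemma:commute}.
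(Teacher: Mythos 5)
Your overall strategy coincides with the paper's: check (a) independence of the reference vertex and (b) independence of the representative, using only the defining equivalence and Lemma \ref{lemma:commute}; your step (a) is exactly the paper's computation and is complete. (A minor point: the case $\Delta''\prec^1_1\Delta'$ that you propose to treat separately never arises here, since every face in $\Star(\Delta)$ has the same sedentarity as $\Delta$, so the identifications defining $A^{(p)}_l(\Delta)$ only involve equisedentary pairs; that case enters later, for the horizontal differential $\partial$, not for $\delta$.)

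The gap sits exactly in the part you defer as ``the main obstacle''. Write $\Delta''=\Delta'\setminus\nu_r$ (your $\nu_*$) and compare $\delta(\beta\otimes\iota(w))$ on $\Delta'$ with $\delta(\beta_r\otimes w)$ on $\Delta''$, where $\beta_r=i_*\beta$. The latter contains link terms $\beta_r^q\otimes(\nu_{0q}\wedge w)$ for those $\nu_q\in\Link(\Delta'')$, $\nu_q\neq\nu_r$, for which $\Delta'\nu_q$ is \emph{not} a face of $X$; these have no counterpart on the $\Delta'$ side, and your plan to dispose of them via $\sum_q\beta^q_q+\sum_j\beta_j^j=0$ cannot work: they lie in different summands $H_{2l-2}(\Delta''\nu_q)\otimes W_{p-l+1}(\Delta''\nu_q)$ with different polyvector factors $\nu_{0q}\wedge w$, so no global cancellation is available — each term must vanish individually. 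It does, for a different reason, which is what the paper uses: by the anticommutation identity $\beta_r^q=(\beta^q)_r$, the class factors through $H_{2l-2}(\Delta'\nu_q)=H_{2l-2}(Z_{\Delta'\nu_q};\Q)$, which is zero because $Z_{\Delta'}\cap Z_{\Delta''\nu_q}=Z_{\Delta'\nu_q}=\emptyset$ when $\Delta'\nu_q$ is not a face. Once this is in place, the rest of (b) is the term-by-term matching you describe, and the sum identity is not needed there at all (it is needed only in step (a)): the distinguished term $\beta_r^r\otimes(\nu_{0r}\wedge w)$ of $\delta(\beta_r\otimes w)$ equals verbatim the interior term with $\nu_j=\nu_r$ of $\delta(\beta\otimes\iota(w))$, both lying in $H_{2l-2}(\Delta')\otimes W_{p-l+1}(\Delta')$ — the wedge factor $\nu_{0r}$ is produced by formula \eqref{eq:vertical} itself, not by $\iota(w)$ — while the remaining terms match under the equivalence via $\beta_r^q=(\beta^q)_r$ and $(\beta_r)_j^j=(\beta_j^j)_r$.
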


\begin{proof}
There are two things to check: (1) it is independent of the choice of the representing sequence $\Delta'=(\nu_0 \nu_1\dots \nu_{k'})$, of which only independence of the choice of the reference vertex $\nu_0\in\Delta'$ is non-trivial, and (2) it is defined on the equivalence class of  $\beta\otimes w$. Since the definition \eqref{eq:vertical} is linear in $w$ we can let $w=1$.

For (1) let $\nu_0'=\nu_1\in \Delta'$ be a new reference vertex ($\nu_1$ is not a divisorial vector). Then
\begin{multline*}
\sum_{\nu_q\in\Link(\Delta')} \beta^q \otimes \nu_{0q} + \sum_{\nu_j\in \Delta'} \beta_j^j \otimes \nu_{0j}
-  \sum_{\nu_q\in\Link(\Delta')} \beta^q \otimes \nu_{1q} - \sum_{\nu_j\in \Delta'} \beta_j^j \otimes \nu_{1j}\\
= \sum_{\nu_q\in\Link(\Delta')} \beta^q \otimes \nu_{01} + \sum_{\nu_j\in \Delta'} \beta_j^j \otimes \nu_{01}\\
\sim \sum_{\nu_q\in\Link(\Delta')} \beta^q_q \otimes \nu_{01} + \sum_{\nu_j\in \Delta'} \beta_j^j \otimes \nu_{01},
\end{multline*}
which is 0 by the last identity in Lemma \ref{lemma:commute}.

For (2) pick an equivalent representative $\beta_r\in H_{2l}(\Delta' \setminus \nu_r)$ (assuming, of course, $\Delta' \setminus \nu_r\succ \Delta$) and compare
$$
\delta \beta= \sum_{\nu_q\in\Link(\Delta')} \beta^q \otimes \nu_{0q}  + \sum_{\nu_j \in \Delta', \ j\ne r} \beta_j^j \otimes \nu_{0j} +\beta_r^r \otimes \nu_{0r},
$$
with
$$
\delta (\beta_r) = \sum_{\nu_q\in\Link(\Delta' \setminus \nu_r)} \beta_r^q \otimes \nu_{0q} +\beta_r^r \otimes \nu_{0r} + \sum_{\nu_j \in \Delta' \setminus \nu_r} (\beta_r)_{j}^j \otimes \nu_{0j}.
$$
First, observe that in the first sum of the latter expression the Gysin image of those $\nu_{q}\in \Link (\Delta'\setminus \nu_r)$ which are not in $\Link (\Delta')$ must be zero by the third identity in Lemma \ref{lemma:commute}. Furthermore, we have
$$(\beta_r)_j^j = (\beta_{rj})^{j} = - (\beta_{jr})^{j} = -((\beta_j)_r)^j = ((\beta_j)^j)_r =  (\beta_j^j)_r.
$$
Then, the two expressions are manifestly equivalent. 
\end{proof} 

\begin{lemma}
The triple $(K^{(p)}_{\bullet, \bullet}, d, \delta)$ is a double complex. 
\end{lemma}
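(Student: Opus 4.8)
The plan is to verify the three defining relations of a double complex: $\partial^2=0$, $\delta^2=0$, and that the horizontal and vertical differentials (anti)commute. Throughout I write $\partial$ for the horizontal differential (denoted $d$ in the statement) and $\delta$ for the vertical one of \eqref{eq:vertical}.

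For $\partial^2=0$ I would argue exactly as for the cellular chain complex $C_\bullet(X;\F_p)$ of Section \ref{sec:cellular}. The horizontal differential is the cellular boundary on $X$, decorated by the identity on representatives for consistently oriented pairs $\Delta''\prec_1\Delta$ of equal sedentarity and by the projection $\pi$ on the $W$-factor for pairs $\Delta''\prec^1_1\Delta$. Composites of coordinate projections are again coordinate projections, and these maps are compatible with the inclusions $\iota$ defining the equivalence; combined with the usual cellular identity $\sum [\Delta:\Delta''][\Delta'':\Delta''']=0$ over an intermediate codimension-one face, and the pairwise cancellation of the two orders in which one sedentarity-changing and one sedentarity-preserving step can be taken, this yields $\partial^2=0$.

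For $\delta^2=0$ I would expand $\delta^2(\beta\otimes w)$ using \eqref{eq:vertical} and sort the resulting terms into Gysin--Gysin, pushforward--pushforward, and mixed contributions, each now wedged with a product $\nu_{0r}\wedge\nu_{0q}\wedge w$ of two added or used vectors. The Gysin--Gysin terms cancel in pairs because $(\beta^q)^r=(\beta^r)^q$ (Lemma \ref{lemma:commute}) while $\nu_{0r}\wedge\nu_{0q}=-\nu_{0q}\wedge\nu_{0r}$; the mixed terms cancel via $(\beta_j)^q=(\beta^q)_j$; and the ``diagonal'' terms, where the same vector is both removed and re-added, recombine through $\sum_q\beta^q_q+\sum_j\beta^j_j=0$, that is, through $i_*\gys+\gys i_*=0$. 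Independence of the reference vertex, already established for $\delta$ itself, keeps the expression well posed throughout; the only real labor is matching the sign from the sequence orientations against the Koszul sign from the wedge, and I would organize it precisely as the verification that $i_*+\gys$ squares to zero in the Steenbrink--Illusie complex, lifted through the polyvector grading.

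Finally, for the relation $\partial\delta\pm\delta\partial=0$: the vertical differential on $A^{(p)}_l(\Delta)$ is defined intrinsically in terms of a face $\Delta'\in\Star(\Delta)$ and a reference vertex of $\Delta'$, and does not otherwise see $\Delta$. For $\Delta''\prec_1\Delta$ of equal sedentarity one has $\Star(\Delta)\subset\Star(\Delta'')$, so $\partial$ acts on representatives by the identity times a fixed incidence sign independent of $l$, and it commutes with each of $i_*$, $\gys$, and wedging; hence $\partial\delta=\pm\delta\partial$ with the expected sign. For $\Delta''\prec^1_1\Delta$, $\partial$ replaces $\Delta'$ by the corresponding $\Delta'''\in\Star(\Delta'')$ and applies the projection $\pi\colon W_{p-l}(\Delta')\to W_{p-l}(\Delta''')$. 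Since $\pi$ is an algebra map killing exactly the divisorial vector, say $\nu_\ast$, along which the sedentarity increases, every term of \eqref{eq:vertical} carrying $\nu_{0\ast}=\nu_\ast$ in its wedge is annihilated; correspondingly, by the convention for pairs $\Delta'\succ^1_1\Delta'''$ the pushforward along $\nu_\ast$ is the identity while its Gysin is zero, so $\delta$ computed downstairs agrees with $\pi$ applied to $\delta$ computed upstairs on all surviving terms. The two-step mixed contributions cancel in pairs as in the case of $\partial^2$. I expect the sign- and index-bookkeeping in this last, sedentarity-changing case to be the main obstacle: one must check that the sole discrepancy between $\delta$ upstairs and downstairs is carried by $\nu_\ast$, for which Proposition \ref{prop:poset}(5) --- the injection $\Pi(\Delta'_0)\hookrightarrow\Pi(\Delta''_0)$ and the isomorphism of stars within a family --- supplies the required identifications of homology groups and of links.
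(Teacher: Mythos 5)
Your proposal is correct and takes essentially the same route as the paper, whose proof merely asserts that $\partial$ is the standard cellular boundary, that compatibility of $\partial$ and $\delta$ is clear from the definitions, and that $\delta^2=0$ follows from the commutation identities of Lemma \ref{lemma:commute}; you are simply filling in the bookkeeping. One small correction: with a consistent choice of reference vertex the ``diagonal'' terms in $\delta^2$ (same vector involved twice) vanish outright because the wedge factor $\nu_{0q}\wedge\nu_{0q}$ is zero, so the identity $\sum_q\beta^q_q+\sum_j\beta_j^j=0$ is not what kills them --- its role is in the preceding lemma, guaranteeing that $\delta$ is independent of the reference vertex, as you note elsewhere.
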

\begin{proof}
The horizontal differential $\partial$ is the standard boundary map on the simplicial complex. The fact that  $\partial$ commutes with $\delta$ is clear from the definitions. Finally, $\delta^2 =0$ follows immediately from the second and the third commutativity identities in Lemma \ref{lemma:commute}.
\end{proof} 

\subsection{Filtration on $(K^{(p)}_{k, \bullet}, \delta)$ by columns}

\begin{proposition}\label{prop:gysin}
The pair $(K^{(p)}_{k, \bullet}, \delta)$ is a resolution of $C_k(X;\F_p)$ 
and the resulting homology complex $(H_0(K^{(p)}_{\bullet, \bullet}, \delta), \partial)$ 
is isomorphic to $C_\bullet(X;\F_p)$.
\end{proposition}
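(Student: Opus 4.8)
Since the vertical differential $\delta$ preserves the decomposition $K^{(p)}_{k,\bullet}=\bigoplus_{\dim\Delta=k}A^{(p)}_\bullet(\Delta)$, while $\partial$ acts on the face index as the ordinary cellular boundary, the proposition reduces to two local statements for each face $\Delta$ of $X$: (a) the complex $(A^{(p)}_\bullet(\Delta),\delta)$ is a resolution of $\F_p(\Delta)$, that is, its homology is $\F_p(\Delta)$ in degree $0$ and vanishes in positive degrees; (b) under the identification $H_0(A^{(p)}_\bullet(\Delta),\delta)\cong\F_p(\Delta)$, the map induced by $\partial$ for a pair $\Delta''\prec_1\Delta$ or $\Delta''\prec^1_1\Delta$ is the cosheaf map $\iota\colon\F_p(\Delta)\to\F_p(\Delta'')$ of \eqref{cosheafmap}--\eqref{eq:boundary_map}. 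Statement (b) I expect to be essentially formal: on $A^{(p)}_0$, $\partial$ acts on a representative $\beta\otimes w$ as the identity on $w$ (respectively as the projection $\pi$ of polyvectors along the extra divisorial direction), which is exactly how $\iota$ was defined; together with the standard cellular signs this makes $(H_0(K^{(p)}_{\bullet,\bullet},\delta),\partial)$ into $C_\bullet(X;\F_p)$.

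To pin down the target in (a): every $Z_{\Delta'}$ with $\Delta'\in\Star(\Delta)$ is connected, being a hyperplane-arrangement complement by Proposition \ref{thm:snc}, so $H_0(\Delta')=\Q$ and $A^{(p)}_0(\Delta)$ is assembled out of the polyvector spaces $W_p(\Delta')$ glued along the poset $\Star(\Delta)$; mapping each $W_p(\Delta')$ into $\wedge^p\R^N$ gives a canonical surjection onto $\sum_{\Delta'\in\Star(\Delta)}W_p(\Delta')$, which by Definition \ref{def:local_fk} is precisely $\F_p(\Delta)$ (the fan $\Sigma(x)$ at a point $x\in\Delta^\circ$ being the star of $\Delta$). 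So (a) amounts to saying that this surjection becomes an isomorphism after passing to $H_0$ and that $\delta$ has no positive homology.

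The heart of the matter is therefore the exactness of the augmented complex $\cdots\to A^{(p)}_1(\Delta)\xrightarrow{\delta}A^{(p)}_0(\Delta)\to\F_p(\Delta)\to 0$, which I would reduce to a statement about the single loopless matroid $M=M_\Delta$ whose Bergman fan is the relative fan $\Sigma_\Delta$ (Proposition \ref{thm:snc}(3)): then $\Star(\Delta)$ is the poset of faces of $\Sigma_M$, i.e.\ of flags of flats of $M$; by Theorem \ref{OS} one has $H_{2l}(\Delta')\cong\bar\F_{2l}(\Delta')$ for every $\Delta'\in\Star(\Delta)$; and $\F_p(\Delta)$ decomposes as $\bigoplus_i W_i(\Delta)\otimes\bar\F_{p-i}(\Delta)$. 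In these terms $\delta$ becomes the Gysin/residue differential attached to the lattice of flats of $M$. I would then prove acyclicity by induction on $\rk(M)$ (equivalently on $\operatorname{codim}_X \Delta$): pick a ground-set element $e$ of $M$ and filter $A^{(p)}_\bullet(\Delta)$ by whether the smallest flat in a given flag contains $e$; the two associated graded pieces are, up to a degree shift, the analogous complexes for the contraction $M/e$ and the deletion $M\setminus e$, which are acyclic by the induction hypothesis, while the deletion--restriction exact sequence for $\bar\F_\bullet$ (equivalently, for the homology of arrangement complements) glues their $H_0$-terms together compatibly with the decomposition of $\F_p(\Delta)$; the long exact sequence of the filtration then collapses, yielding both the vanishing of $H_{>0}$ and $H_0\cong\F_p(\Delta)$. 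The base cases, $M$ of rank $0$ or $1$ (where $Z_\Delta^\circ$ is a point or $\C^\times$), are immediate, and the sedentary case reduces to that of a mobile face in a suitable chart via the splitting of Lemma \ref{lemma:cylinders}.

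The hard part will be precisely this inductive acyclicity step: carrying the equivalence relation defining $A^{(p)}_\bullet(\Delta)$ and the sign conventions fixed just before Lemma \ref{lemma:commute} faithfully through the deletion--contraction filtration, and matching $\delta$ with the Orlik--Solomon residue maps. The rest is routine verification.
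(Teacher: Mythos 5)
Your overall scaffolding agrees with the paper: reduce to the local statement that the augmented complex $A^{(p)}_\bullet(\Delta)\to\F_p(\Delta)\to 0$ is exact, note that compatibility of $\partial$ with the augmentation is formal, and pass to the graded pieces $\overline\F_{p-r}(\Delta)\otimes W_r(\Delta)$ of the filtration by divisibility by $W_\bullet(\Delta)$ (this is exactly the filtration $F^\Delta$ and Lemma \ref{lemma:filtration}). But the heart of your argument rests on a misidentification. The terms of the double complex are $H_{2l}(\Delta')=H_{2l}(Z_{\Delta'};\Q)$, the homology of the \emph{closed} strata $Z_{\Delta'}=\bar{\mathcal Z}\cap D_{\Delta'}$, i.e.\ of compactifications of arrangement complements inside toric strata. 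Theorem \ref{OS} identifies $\bar\F_k$ with the homology of the \emph{open} stratum $Z^\circ_{\Delta'}$ only; already for a vertex of a tropical curve one has $H_2(Z_{\Delta'})=H_2(\C\PP^1)=\Q$ while $\bar\F_2(\Delta')=0$. So your claim ``$H_{2l}(\Delta')\cong\bar\F_{2l}(\Delta')$'' is false, and the complex you propose to analyze by deletion--contraction is not the complex in question. Worse, the precise relation between the closed-strata homologies (linked by Gysin maps) and $\overline\F_p(\Delta)$ is \emph{exactly} the statement to be proved; by conflating the two you have in effect assumed the key point. The paper obtains this exactness from Deligne's weight spectral sequence for the smooth quasi-projective variety $Z^\circ_\Delta$, whose $E_1$ rows consist of the $H_\bullet(Z_{\Delta'})$ with Gysin differentials, together with degeneration at $E_2$ and Shapiro's purity theorem (the MHS of an arrangement complement is pure of type $(p,p)$), which forces every entry of the $2p$-th row except the leftmost one, namely $\overline\F^p(\Delta)\cong H^p(Z^\circ_\Delta)$, to die.

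A secondary problem: you identify $\Star(\Delta)$ with the poset of flags of flats of a matroid $M_\Delta$. Smoothness only says the relative fan $\Sigma_\Delta$ has the same \emph{support} as a Bergman fan; after the unimodular triangulation used to build the central fiber, the star of $\Delta$ is some simplicial refinement of that support, not the flag-of-flats fan, and the closed strata $Z_{\Delta'}$ (hence the $H_{2l}(\Delta')$) depend on this triangulation. A purely matroid-theoretic deletion--contraction induction would therefore have to be carried out for an arbitrary unimodular subdivision and for the homology of the corresponding tropical compactifications, which is a substantially harder combinatorial statement than the one you sketch; nothing in your outline addresses it. If you want a proof along your lines, you must first replace $\bar\F_{2l}(\Delta')$ by the correct groups $H_{2l}(Z_{\Delta'})$ and prove the resulting exactness; the Hodge-theoretic route of the paper is precisely a way of doing that without combinatorics.
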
 

To prove the proposition we introduce an increasing filtration $F^\Delta$ 
on the spaces $A^{(p)}_l(\Delta)$:
\begin{equation}\label{eq:filtration}
0\subseteq F^\Delta_{-p+l} \subseteq \dots \subseteq F^\Delta_0 = A^{(p)}_l(\Delta),
\end{equation}
where $F^\Delta_{-r} A^{(p)}_l(\Delta)$ consists of elements which can be represented by $\beta\otimes w \in H_{2l}(\Delta')\otimes W_{p-l}(\Delta')$ with $\Delta'\succ_{\le (p-l-r)} \Delta$. 
Or, equivalently, $\beta\otimes w$ is in $F^\Delta_{-r} A^{(p)}_l(\Delta)$ if  $w$ belongs to the ideal generated by $\iota (W_r(\Delta))$. 
In particular, the filtration on $A^{(p)}_0(\Delta)$ induces one on the coefficient groups $\F_p(\Delta)$.

\begin{lemma}\label{lemma:filtration}
The associated graded groups
$\Gr_{-r}^{F^\Delta} A^{(p)}_l(\Delta) = F^\Delta_{-r} A^{(p)}_l(\Delta)/  F^\Delta_{-r-1} A^{(p)}_l(\Delta)$ can be naturally identified with
$\bigoplus_{\Delta'\succ_{p-r-l}\Delta} H_{2l}(\Delta') \otimes W_{r}(\Delta)$.
The graded pieces $\Gr_{-r}^{F^\Delta} \F_p(\Delta)$ can be naturally identified with $\overline\F_{p-r}(\Delta) \otimes W_{r}(\Delta)$, where $\overline\F_{p-r}(\Delta)$ are the relative coefficient groups at $\Delta$ {\rm(}cf. Section \ref{sec:cellular}{\rm)}.
\end{lemma}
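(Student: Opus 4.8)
The proof of both assertions rests on one piece of elementary linear algebra: for a subspace $U\subset V$ of a finite-dimensional $\Q$-space, the exterior power $\wedge^kV$ carries the increasing filtration $G_\bullet$ with $G_rV:=\mathrm{im}(\wedge^rU\otimes\wedge^{k-r}V\to\wedge^kV)$, and there is a canonical isomorphism $\Gr^G_r\wedge^kV\cong\wedge^rU\otimes\wedge^{k-r}(V/U)$. The plan is to apply this with $U=\Q\<\Delta\>$, $V=\Q\<\Delta'\>$, $k=p-l$ for every $\Delta'\in\Star(\Delta)$, after observing that the ideal generated by $\iota(W_r(\Delta))$ inside $\wedge^\bullet\Q\<\Delta'\>$ is exactly $G_r\wedge^{p-l}\Q\<\Delta'\>$, so that the two descriptions of $F^\Delta_{-r}$ in \eqref{eq:filtration} become the statement that $F^\Delta_{-r}A^{(p)}_l(\Delta)$ is generated by classes $[\,\Delta',\beta\otimes w\,]$ with $w\in G_r\wedge^{p-l}\Q\<\Delta'\>$. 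Since $X$ is smooth, $\Star(\Delta)$ has the support of a Bergman fan; after refining we may take each $\Delta'$ to be a simplicial cone, so its rays form a $\Q$-basis of $\Q\<\Delta'\>$ extending a basis of $\Q\<\Delta\>$, every element of $W_{p-l}(\Delta')$ is a $\Q$-combination of ray-wedges, and a ray-wedge $\rho_{i_1}\wedge\dots\wedge\rho_{i_{p-l}}$ lies in $G_r$ precisely when at least $r$ of the $\rho_{i_j}$ are rays of $\Delta$.

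First I would prove the assertion about $A^{(p)}_l(\Delta)$. Each ray-wedge as above extends to a unique smallest face $\Delta''\in\Star(\Delta)$ — the one spanned by $\Delta$ together with the chosen rays — of codimension at most $p-l-r$ over $\Delta$; using the relation $\beta\otimes\iota(w)\sim i_*(\beta)\otimes w$ one re-expresses every generator of $F^\Delta_{-r}A^{(p)}_l(\Delta)$ as a class $[\,\Delta'',\beta''\otimes(\sigma\wedge\tau)\,]$ with $\Delta''\succ_j\Delta$, $j\le p-l-r$, $\sigma\in W_r(\Delta)$, and $\tau$ a wedge of rays of $\Delta''$ none of which lies in $\Delta$. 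If $j<p-l-r$ then $\sigma\wedge\tau$ involves at least $r+1$ rays of $\Delta$, hence belongs to $F^\Delta_{-r-1}$ and vanishes in $\Gr_{-r}$; if $j=p-l-r$ then $\tau$ is, up to sign, the canonical generator of the one-dimensional space $\wedge^{p-l-r}(\Q\<\Delta''\>/\Q\<\Delta\>)$, which I identify with $\Q$ via the primitive covolume form $\Omega_{\Delta''\succ\Delta}$ and the fixed orientations. This produces a surjection $\bigoplus_{\Delta''\succ_{p-r-l}\Delta}H_{2l}(\Delta'')\otimes W_r(\Delta)\twoheadrightarrow\Gr^{F^\Delta}_{-r}A^{(p)}_l(\Delta)$, and for injectivity I would check that $\sim$, being generated by the moves above, preserves the decomposition of a class according to the unique home face of each ray-wedge, so that at the associated-graded level it never identifies classes supported on distinct faces of codimension exactly $p-r-l$ and never alters the $H_{2l}$-factor. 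This last point — that $\sim$, which a priori chains through arbitrarily many faces and involves the pushforwards $i_*$ (which are neither injective nor surjective in general) — produces no relations in $\Gr_{-r}$ beyond the tautological ones is the main obstacle, and it is exactly where simpliciality of the relative fan is used.

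For the assertion about $\F_p(\Delta)$ I would take $l=0$, so that $A^{(p)}_0(\Delta)$ maps onto $\F_p(\Delta)$ (because $H_0(Z_{\Delta'})=\Q$ for every $\Delta'$ and $i_*$ is the identity on these groups) and the induced filtration is $F^\Delta_{-r}\F_p(\Delta)=\wedge^r\Q\<\Delta\>\wedge\F_{p-r}(\Delta)\subseteq\F_p(\Delta)\cap G_r\wedge^p\Q^N$, where $G_\bullet$ now refers to $U=\Q\<\Delta\>\subset\Q^N$. The core computation, call it (B), is that the composite $\F_p(\Delta)\cap G_r\hookrightarrow G_r\wedge^p\Q^N\twoheadrightarrow\wedge^r\Q\<\Delta\>\otimes\wedge^{p-r}(\Q^N/\Q\<\Delta\>)$ has image exactly $W_r(\Delta)\otimes\overline{\F}_{p-r}(\Delta)$ and kernel $\F_p(\Delta)\cap G_{r+1}$: splitting each vector of a generator $v_1\wedge\dots\wedge v_p$ (all $v_i$ in a common cone $\Delta'\supseteq\Delta$) into its $\Q\<\Delta\>$-part and a transverse part, the degree-$r$ component is a sum of terms $(\wedge_{i\in S}v_i')\otimes(\wedge_{i\notin S}\overline v_i)$, $|S|=r$, whose second factors are wedges of $p-r$ vectors from a common cone of the relative fan $\Sigma_\Delta$ and hence lie in $\overline{\F}_{p-r}(\Delta)$, giving one inclusion; the reverse inclusion is realized by the same type of generators, which moreover already lie in $F^\Delta_{-r}\F_p(\Delta)$, and the kernel statement is immediate. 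Given (B), the natural surjection $\Gr^{F^\Delta}_{-r}\F_p(\Delta)\to(\F_p(\Delta)\cap G_r)/(\F_p(\Delta)\cap G_{r+1})$ becomes an isomorphism by downward induction on $r$ (once $F^\Delta_{-r-1}\F_p(\Delta)=\F_p(\Delta)\cap G_{r+1}$ is known it forces $F^\Delta_{-r}\F_p(\Delta)=\F_p(\Delta)\cap G_r$), whence $\Gr^{F^\Delta}_{-r}\F_p(\Delta)\cong W_r(\Delta)\otimes\overline{\F}_{p-r}(\Delta)$ naturally in $\Delta$. I expect the $A^{(p)}_l$-case to be the delicate one; the $\F_p$-case is essentially formal once (B) is in hand.
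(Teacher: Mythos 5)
Your treatment of the second assertion is fine and in fact more detailed than the paper's: the paper disposes of the statement about $\F_p(\Delta)$ in one sentence, by the same device you use, namely regarding the polyvectors from $W_p(\Delta')$ inside the ambient $\wedge^p\Q^N$ and comparing the induced filtration with the filtration by $\wedge^\bullet\Q\langle\Delta\rangle$; your step (B) and the downward induction correctly implement this. The problem is the first assertion, where your proposal stops exactly at its crux. You construct a surjection $\bigoplus_{\Delta''\succ_{p-r-l}\Delta}H_{2l}(\Delta'')\otimes W_r(\Delta)\twoheadrightarrow\Gr^{F^\Delta}_{-r}A^{(p)}_l(\Delta)$ and then say you ``would check'' that the equivalence relation creates no further identifications; that check is the entire content of the claim, and the mechanism you sketch for it is not right as stated: the defining relation $\beta\otimes\iota(w)\sim i_*(\beta)\otimes w$ does change the supporting face and does alter the $H_{2l}$-factor (through $i_*$, which is neither injective nor surjective in general), so one cannot argue that $\sim$ ``never alters the $H_{2l}$-factor'' or that it tautologically respects a decomposition by home faces. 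What closes the argument in the paper is the residue map itself: for a representative $\beta\otimes w$ with $w\in W_{p-l}(\Delta')$ and $\Delta'\succ_{p-l-r}\Delta$, the residue $\shad_{\Delta'\succ\Delta}(w)\in W_r(\Delta)$ vanishes if and only if $w$ lies in the ideal generated by $\iota(W_{r+1}(\Delta))$, i.e.\ if and only if $\beta\otimes w$ already lies in $F^\Delta_{-r-1}$; this kernel computation, together with surjectivity of $\shad$, is what makes $\beta\otimes w\mapsto\beta\otimes\shad_{\Delta'\succ\Delta}(w)$ identify the graded piece face by face, the $H_{2l}$-factor being untouched because the residue acts only on the polyvector factor. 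Any completion of your argument must contain this kernel characterization (or, equivalently, the construction of the inverse map $\beta\otimes\sigma\mapsto[\beta\otimes\sigma\wedge\tau_{\Delta''/\Delta}]$ and the verification that it is well defined); your text does not.

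A secondary but real misstep: you cannot ``refine'' the fan to make the cones simplicial. The faces $\Delta'$ index the strata $Z_{\Delta'}$ of the central fiber and hence the groups $H_{2l}(\Delta')$, so the polyhedral structure is fixed, and subdividing it would change $A^{(p)}_l(\Delta)$ itself. The simpliciality you want comes for free from the standing hypothesis in the proof of Theorem \ref{theorem:main} that $X$ has been unimodularly triangulated: every face in $\Star(\Delta)$ is a product of a unimodular simplex and a unimodular cone, so its span has a basis of edge and divisorial vectors extending one of $\Q\langle\Delta\rangle$. It is this, not smoothness or the Bergman-fan condition, that legitimizes your ``home face'' bookkeeping and the equivalence of the two descriptions of $F^\Delta_{-r}$ in \eqref{eq:filtration} that both you and the paper use.
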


\begin{proof}
Consider an element in $A^{(p)}_l(\Delta)$ which is represented by  $\beta\otimes w$ with $w \in W_{p-l}(\Delta')$ for some $\Delta'\succ_j\Delta$. This means that this element is in $F^\Delta_{-p+j+l} A^{(p)}_l(\Delta)$. The residue map 
$$\shad_{\Delta'\succ\Delta}: W_{p-l}(\Delta')\to W_{p-l-j}(\Delta)
$$ 
is surjective, and $\shad_{\Delta'\succ\Delta}(w)$ vanishes if and only if $w \in W_{p-l}(\Delta')$ lies in the ideal generated by $\iota (W_{p-l-j+1}(\Delta))$, that is, if and only if
$\beta\otimes w \in F^\Delta_{-p+j+l-1} A^{(p)}_l(\Delta)$. Thus, the residue map
$$\beta\otimes w \mapsto \beta\otimes \shad_{\Delta'\succ\Delta}(w)
$$
provides the desired isomorphism
$$F^\Delta_{-r} A^{(p)}_l(\Delta)/  F^\Delta_{-r-1} A^{(p)}_l(\Delta) \cong
\bigoplus_{\Delta'\succ_{p-r-l}\Delta} H_{2l}(\Delta') \otimes W_{r}(\Delta).
$$

Finally, the induced filtration on $\F_p(\Delta)$ is given by considering the polyvectors from $W_p(\Delta')$ as elements in the space $\wedge^p \Q^N$ of ambient polyvectors. The last statement of the Lemma follows.
\end{proof}

\begin{proof}[Proof of Proposition \ref{prop:gysin}]
The horizontal differentials $\partial$ on $A^{(p)}_0(\Delta)$ and the cell boundary maps on $\F_p(\Delta)$ commute with the augmentation maps  $A^{(p)}_0(\Delta)\to\F_p(\Delta)$. Thus,
it suffices to check that for each $\Delta$ the augmented complex $A^{(p)}_\bullet(\Delta)\to\F_p(\Delta) \to 0$ is exact. 
It is enough to show exactness on the associated graded level with respect to the filtration $F^\Delta$. 
The filtration $F^\Delta$ respects the vertical differential $\delta$, since $\delta$ 
raises the degree of a polyvector $w$ by 1, but puts it in a face of at most one dimension higher. 
It also respects the augmentation map $A^{(p)}_0(\Delta)\to\F_p(\Delta)$, 
and hence defines a filtration on the augmented complex. 

By Lemma \ref{lemma:filtration} for each $r$ the associated graded complex $\Gr_{-r}^{F^\Delta} A^{(p)}_\bullet(\Delta)\to\Gr_{-r}^{F^\Delta} \F_p(\Delta) \to 0$ has the form
\begin{multline*}
H_{2p-2r}(\Delta) \otimes W_{r}(\Delta)
\to \bigoplus_{\Delta'\succ_1\Delta}H_{2p-2r-2}(\Delta') \otimes W_{r}(\Delta)
\to \dots \\
 \to \bigoplus_{\Delta'\succ_{p-r}\Delta}H_{0}(\Delta') \otimes W_{r}(\Delta)
 \to
\overline\F_{p-r}(\Delta) \otimes W_{r}(\Delta)  \to 0,
\end{multline*}
where the differential is $W_{r}(\Delta)$-linear. From the definition of the vertical map $\delta$ in the double complex we can identify this differential on the homology groups as the Gysin map. 

Notice that if $\Delta$ is sedentary with a parent $\Delta_0$, then there is an obvious map 
$$\Gr_{-r}^{F^\Delta_0} A^{(p)}_\bullet(\Delta)\to\Gr_{-r}^{F^\Delta} A^{(p)}_\bullet(\Delta),
$$
which is the identity on the corresponding homology factors $H_{2l}(\Delta'_0)= H_{2l}(\Delta')$ tensored with the projection $ W_{r}(\Delta_0) \to  W_{r}(\Delta)$. Taking into account $\overline\F_{p-r}(\Delta_0)= \overline\F_{p-r}(\Delta)$ this projection extends to the augmentation
$$\overline\F_{p-r}(\Delta_0) \otimes W_{r}(\Delta_0) \to \overline\F_{p-r}(\Delta) \otimes W_{r}(\Delta) .
$$
Thus, we are reduced to proving for each $p$ the exactness of the complex 
$$
\bigoplus_{\Delta'\succ_{p-\bullet}\Delta}H_{2\bullet }(\Delta')
 \to
\overline\F_{p}(\Delta) \to 0
$$
for mobile faces $\Delta$ of $X$.

Recall Deligne's spectral sequence \cite{Deligne} which calculates the weight filtration of the mixed Hodge structures for smooth quasi-projective varieties $Y$. In our case, $Y=Z^\circ_\Delta$ is the relatively open part of $Y^{(0)}=Z_\Delta$. 
Let $Y^{(j)}$ denote the disjoint union of the intersections $Z_{\Delta'}$ for $\Delta'\succ_j\Delta$. 
Then, 
we have $H_{2l}(Y^{(j)};\Q)=\bigoplus_{\Delta'\succ_j\Delta}H_{2l}(\Delta')$. 

On the other hand, by construction $Y$ is  (cf. Proposition \ref{thm:snc}) the complement of a hyperplane arrangement in $\PP^{n-k}$ and its cohomology $H^{p}(Y;\Q)$ is isomorphic to $\overline\F^{p}(\Delta)$ by Theorem \ref{OS}.

The $E_1$ term of the Deligne's MHS spectral sequence looks as follows (in our case all odd rows are zero):
$$
\begin{array}{ccc}
\dots & \dots & \dots \\
H^0(Y^{(2)})  \to & H^2(Y^{(1)}) \to &  H^4 (Y^{(0)})\\
& H^1(Y^{(1)}) \to  & H^3 (Y^{(0)})\\
& H^0(Y^{(1)}) \to  & H^2 (Y^{(0)})\\
& & H^1 (Y^{(0)})\\
&  & H^0 (Y^{(0)})\\
\end{array}
$$
The differential is the Gysin map, and the spectral sequence degenerates at $E_2$.  The MHS weight filtration on the diagonals in $E_2=E_\infty$ is given by the rows. The MHS structure on $H^{p}(Y;\C)$ is pure of type $(p,p)$ (cf. \cite{Shapiro}). Thus, in $E_2$ the only non-zero terms will be the groups $\overline\F^{p}(\Delta)$ in the left most entries of the even rows. 
Dualizing all groups and inverting arrows in the $2p$-th row we get the desired exact complex
\begin{equation*}\label{eq:left_column}
0 \leftarrow \overline\F_p(\Delta) \leftarrow H_0(Y^{(p)}) \leftarrow \dots  \leftarrow   H_{2p-2}(Y^{(1)})  \leftarrow H_{2p}(Y^{(0)}).
\end{equation*}
\end{proof}

\subsection{Another filtration on the total complex $K^{(p)}_\bullet$ of $K^{(p)}_{\bullet, \bullet}$}

The new increasing filtration $F_m$ on $K^{(p)}_\bullet$ is, in fact, the old filtration $F^\Delta$ on the spaces $A^{(p)}_l(\Delta)$ with some degree shifts depending on $\Delta$. Namely, if $\Delta$ has dimension $k$ and sedentarity $s$ we set $F_m {A^{(p)}_l(\Delta)} = F^\Delta_{m-k-l-s} A^{(p)}_l(\Delta)$. We will see below that both $\delta$ and $\dd$ respect the filtration.

\begin{example}
We illustrate first few terms in $\Gr_m^F K^{(p)}_{\bullet, \bullet}$ for $p=2$:
$$ \xymatrix{
 H_4(0) W_0(0)_2 \ar[d]  &  H_4(1)  W_0(1)_3 \ar[l] \ar[d]  &  H_4(2)  W_0(2)_4 \ar[l] \ar[d]  &  H_4(3)  W_0(3)_5 \ar[l] \ar[d]
 &  \ar[l]
 \\
 H_2(1)W_0(0)	_1						 \ar[d] &
 \txt{$H_2(2) W_0(1)_2 $ \\ $\oplus H_2(1)W_1(1)_1$} \ar[l] \ar[d] &
 \txt{$H_2(3) W_0(2)_3 $ \\ $ \oplus H_2(2) W_1(2)_2$} \ar[l] \ar[d]  &
 \txt{$H_2(4) W_0(3)_4 $ \\ $ \oplus H_2(3) W_1(3)_3$} \ar[l] \ar[d]
  &  \ar[l]
  \\
 H_0(2) W_0(0)_0 							 &
 \txt{$H_0(3) W_0(1)_1 $ \\ $\oplus H_0(2) W_1(1)_0$}  \ar[l]  &
 \txt{$H_0(4) W_0(2)_2 $ \\ $\oplus H_0(3) W_1(2)_1 $ \\ $\oplus H_0(2) W_2(2)_0$}\ar[l]  &
 \txt{$H_0(5) W_0(3)_3 $ \\ $\oplus H_0(4) W_1(3)_2 $ \\ $\oplus H_0(3) W_2(3)_1$}\ar[l]
 &  \ar[l]
 }
$$
The notation $H_{2l}(k') W_r (k)$ means the direct sum of terms $H_{2l}(\Delta') \otimes W_r(\Delta)$ running over all incident pairs $\Delta'\succ\Delta$ of dimensions $k'$ and $k$ respectively and equal sedentarity. The subscript indicates the grading index $m$, assuming 0 sedentarity.
\end{example}

The $E^0$-term in the associated spectral sequence is
$$ E^0_{m,r}= \Gr_{m}^{F} K^{(p)}_{r+m} = \oplus_{l=0}^\infty \bigoplus_{\Delta'\succ\Delta} H_{2l}(\Delta') \otimes W_{r+s}(\Delta),
$$
where $\Delta'\succ\Delta$ run over all incident pairs of dimensions $(p-2l+m-s)$ and $(r-l+m)$, respectively, and $s$ is their sedentarity. We can replace $\Delta'$ in $H_{2l}(\Delta')$ by its parent $\Delta'_0$, whose dimension is $(p-2l+m)$. Note that the terms in the $l$-sum are zero unless $l\le p$ (original bound in $A^{(p)}_l(\Delta)$) and $l\le m$ (follows from $r+s \le r-l+m$).

We calculate the differential $d_0=\partial_0+\delta_0$. 
Pick an element in $\Gr^F_m A^{(p)}_l(\Delta)$ represented by $\beta\otimes w \in H_{2l}(\Delta') \otimes W_{p-l}(\Delta')$.
Note that $\delta$ lowers $l$ by 1 and it does not change anything else. 
Hence, it always lowers the total degree by 1 and will appear only in the $E^1$ term. 
That is, $\delta_0$ vanishes and $\delta_1$ 
is the Gysin map (cf. the proof of Proposition \ref{prop:gysin}). 

Let us analyze the effect of the horizontal map $\partial: A^{(p)}_l(\Delta) \to A^{(p)}_l(\Delta'')$. Note that $\partial$ lowers the dimension of $\Delta$ by 1. On the other hand, it can raise the sedentarity by 1 or it can lower $r$ by 1 in $F^\Delta_{-r}$ filtration. In the latter case $\dd_0$ will be zero, and $\dd$ will only contribute to $\dd_1$ in the $E^1$ term. We give some details.

If $\Delta''\prec^1_1\Delta$, then consider the corresponding face $\Delta'''\prec^1_1\Delta'$ with $H_{2l}(\Delta''')=H_{2l}(\Delta')=H_{2l}(\Delta'_0)$ and the commutative diagram of the residue maps:
$$ \xymatrix{
\beta\otimes \pi(w) \ar[d]
 &  \beta\otimes w  \ar[l]_{\partial= \id \otimes \pi} \ar[d]
 \\
\beta\otimes \shad_{\Delta'''\succ \Delta''} (\pi(w))
 &  \beta\otimes \shad_{\Delta'\succ \Delta} (w) \ar[l]_--{\partial_0}
 }
$$
Since $\shad_{\Delta'''\succ \Delta''} (\pi(w)) = \shad_{\Delta'\succ \Delta''}(w) = \pi(\shad_{\Delta'\succ \Delta} (w))$, we see that  on the associated graded level $\partial$ acts $H_{2l}(\Delta'_0)$-linearly and as the projection $\pi$ on $W_{r+s}(\Delta)$:
$$\partial_0 = \id \otimes \pi: H_{2l}(\Delta'_0) \otimes W_{r+s}(\Delta) \to H_{2l}(\Delta'_0) \otimes W_{r+s}(\Delta'').
$$

If $\Delta''\prec_1\Delta$, the commutative diagram of the residue maps
$$ \xymatrix{
\beta\otimes w \ar[d]_{\id\otimes\shad}
 &  \beta\otimes w \ar@{=}[l]_{\partial =\id} \ar[d]^{\id\otimes\shad}
 \\
 \beta\otimes \shad_{\Delta'\succ \Delta''} (w)
 &  \beta\otimes \shad_{\Delta'\succ \Delta} (w) \ar[l]_--{\dd_0}
 }
$$
shows that $\partial_0$ acts as before by the identity on $H_{2l}(\Delta')$, and by the residue map on $W_{r+s}(\Delta)$:
$$\partial_0 = \id \otimes  \shad_{\Delta\succ \Delta''}: H_{2l}(\Delta'_0) \otimes W_{r+s}(\Delta) \to H_{2l}(\Delta'_0) \otimes W_{r+s-1}(\Delta'').
$$

\begin{proposition}\label{prop:SI}
The $E^1$ term of spectral sequence associated to the filtration $F$ on $K^{(p)}_\bullet$ is a single 0-th row which coincides with the complex \eqref{eq:E^1}. 
\end{proposition}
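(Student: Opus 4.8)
The plan is to extract the conclusion from the description of the $E^0$-page and its differential $d^0$ obtained just above. First, recall what has been established there: $\delta_0=0$, so $d^0=\partial_0$; and on the associated graded level $\partial_0$ is the identity on the homology factors $H_{2l}(\Delta'_0)$ (after replacing each face $\Delta'$ occurring in $A^{(p)}_l(\Delta)$ by its parent mobile face $\Delta'_0$, of dimension $p-2l+m$) tensored with the residue maps $\shad$ on the polyvector factors along same-sedentarity incidences and with the projections along cosedentarity-one incidences. Since $\partial_0$ alters neither $l$ nor the family $\Pi(\Delta'_0)$, the complex $(E^0_{\bullet,\bullet},d^0)$ splits as a direct sum, indexed by $l\ge 0$ and by the mobile faces $\Delta_0$ of $X$, of complexes $H_{2l}(\Delta_0)\otimes\mathcal C^{\Delta_0}_\bullet$, where $\mathcal C^{\Delta_0}_\bullet$ is built from the rational polyvector spaces $W_r(\Delta)=\wedge^r\Q\<\Delta\>$ attached to the faces $\Delta$ of the closed cell $\overline{\Delta_0}=\bigcup_{\Delta'\in\Pi(\Delta_0)}\Delta'$, with differential assembled from the residues and from the projections toward the faces at infinity, the grading arranged so that polyvector degree equal to sedentarity sits on the $0$-th row of the spectral sequence.

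The heart of the matter is then the following claim: $\mathcal C^{\Delta_0}_\bullet$ has homology concentrated in the single bidegree lying on the $0$-th row, and there its homology is $\Q$ precisely when $\Delta_0$ is a finite face of dimension $p-2l+m$ (so that $0\le l\le\min\{m,p\}$), and vanishes otherwise. To prove it I would trade polyvectors for covectors via the Poincar\'e isomorphism $W_r(\Delta)\cong\wedge^{\dim\Delta-r}(\Q\<\Delta\>)^*\otimes\wedge^{\dim\Delta}\Q\<\Delta\>$, which presents $\mathcal C^{\Delta_0}_\bullet$ as a relative cellular chain complex of the ball-with-corners $\overline{\Delta_0}$ --- relative to the part of its boundary at infinity --- with coefficients in the constructible system $\Delta\mapsto\wedge^{\dim\Delta_0-p+l}(\Q\<\Delta\>)^*\otimes\wedge^{\dim\Delta}\Q\<\Delta\>$; the residues become the cellular boundary and the projections the incidences at infinity. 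Contractibility of $\overline{\Delta_0}$, together with a Koszul-type factorization (wedging with a fixed linear form cutting out a facet of $\Delta_0$) to absorb the twist, should then give both the acyclicity and the identification of the surviving group; the finiteness of $\Delta_0$ is exactly what makes the relative complex acyclic for an infinite mobile face, since then $\overline{\Delta_0}$ meets infinity. (Alternatively, the claim can be deduced by transposing the argument of Proposition~\ref{prop:gysin}, where the same polyvector complexes already appear and are resolved via Deligne's mixed Hodge spectral sequence.)

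Granting the claim, $E^1_{m,r}=0$ off the $0$-th row, while on that row $E^1_{m,0}=\bigoplus_{l=0}^{\min\{m,p\}}\bigoplus_\Delta H_{2l}(\Delta)$, the inner sum over finite mobile $(p-2l+m)$-dimensional faces $\Delta$ of $X$; this is exactly \eqref{eq:E^1}, i.e.\ $\tilde E^1_{m,p}$. Finally I would identify the induced differential $d^1$. The vertical differential contributes $\gys$: it is $\delta_1$, namely the pure-Gysin part of \eqref{eq:vertical} (the first sum), which is the portion of $\delta$ that drops the $F$-degree by exactly one. The horizontal differential contributes the pushforward $i_*$, which surfaces only in the $E^1$-term. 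The sign conventions fixed in the Notations subsection, together with the (anti)commutation relations of Lemma~\ref{lemma:commute}, assemble these into $d^1=i_*+\gys$, the Steenbrink--Illusie differential; hence $(E^1_{\bullet,0},d^1)$ coincides with $(\tilde E^1_{\bullet,p},\,i_*+\gys)$.

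The step I expect to be the main obstacle is the claim of the second paragraph: proving the acyclicity of the local polyvector complex on each closed cell, singling out the one bidegree that survives so that $E^1$ genuinely is a single row, and tracing precisely how finiteness of $\Delta_0$ --- i.e.\ the behaviour along the faces at infinity --- selects exactly the terms occurring in \eqref{eq:E^1}. Getting all the signs in $d^1$ to line up is a secondary but unavoidable bookkeeping task.
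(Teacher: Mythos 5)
Your global architecture is the same as the paper's: split the $E^0$-page, face by face, into tensor products of the homology groups $H_{2l}(\Delta'_0)$ (constant along each family) with a local complex of polyvector spaces $W_\bullet(\Delta)$ whose differential is assembled from residues and projections to infinity; show that this local complex contributes nothing for infinite mobile faces and a single $\Q$ (in the right degree, giving the bound $l\le\min\{m,p\}$) for finite ones; then identify $d^1$ as $i_*+\gys$, with $\delta_1$ the Gysin part and $\partial_1$ the pushforward. All of that matches the paper, and your bookkeeping of degrees is correct.

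The genuine gap is the claim you yourself flag as the main obstacle: the acyclicity/resolution statement for the local polyvector complex, which is exactly the paper's Lemma \ref{lemma:koszul} on the residue complex $\mathbb W_\bullet(\Delta'_0)$. Your sketch --- dualize by Poincar\'e duality, view the complex as relative cellular chains of the ball-with-corners $\overline{\Delta_0}$ modulo its boundary at infinity, and invoke contractibility plus ``a Koszul-type factorization'' --- does not stand on its own: the coefficient system $\Delta\mapsto W_\bullet(\Delta)$ is not constant (the spaces $\Q\<\Delta\>$ change from face to face), so contractibility of the cell gives nothing by itself, and the ``Koszul-type factorization'' is precisely the content that has to be supplied, including why for a finite face exactly one $\Q$ survives in \emph{each} $q$-degree $0\le q\le\dim\Delta'_0$ and why sedentary faces kill everything for infinite ones. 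The paper does this by writing $\Delta'_0=\Delta\times\bar\square$ (simplex times compactified unimodular cone), factoring the residue complex as a tensor product $\mathbb W_\bullet(\Delta)\otimes\mathbb W_\bullet(\bar\square)$, computing the elementary two- and three-term complexes for a ray and a compactified ray explicitly, and inducting on $\dim\Delta$ via a short exact sequence of residue complexes. Your parenthetical alternative --- ``transpose the argument of Proposition \ref{prop:gysin}'' --- is not a substitute: the graded complexes there run in the Gysin direction with fixed $\Delta$ and are resolved by Deligne's spectral sequence and Orlik--Solomon input, whereas here the differential is the residue map with fixed $\Delta'_0$, a purely combinatorial Koszul computation. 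Secondarily, the identification $\partial_1=i_*$ is asserted rather than proved; it requires the explicit lift-and-project computation (represent a class by $\beta\otimes 1$, lift it using a relative volume polyvector of the form $w=\iota(u)$, apply $\partial$ and the residue map) that the paper carries out, though this part is routine once Lemma \ref{lemma:koszul} is in place.
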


To calculate the homology of the complex $( \Gr_{m}^{F} K^{(p)}_\bullet, \partial_0)$ we need a bit of Koszul-type linear algebra. Let $\Delta'_0$ be a mobile face of $X$. Consider the {\em residue} complex 
$$ \mathbb W_\bullet (\Delta'_0) = \bigoplus W_{\bullet+s} (\Delta),
$$
where $\Delta$ runs over all faces of $\Delta'_0$ (including sedentary ones, here $s$ is the sedentarity of $\Delta$). The differential is the residue map 
$$\shad_{\Delta\succ\Delta''}: W_r(\Delta) \to W_{r-1+s}(\Delta'')
$$
for pairs $\Delta\succ_1^s\Delta''$. The residue complex splits into the direct sum of complexes $\mathbb W_\bullet^q (\Delta'_0)$ according to the degree $q=\dim (\Delta) +\operatorname{sed}(\Delta)-r$ of its terms $W_{r} (\Delta)$ (the residue map preserves the $q$-degree).

\begin{lemma}\label{lemma:koszul}
The residue complex $\mathbb W_\bullet (\Delta'_0)$ is exact if $\Delta_0$ is an infinite face.
If $\Delta'_0$ is finite, then $\mathbb W_\bullet^q (\Delta'_0)$ has homology $H_0=\Q$ and $H_{>0} =0$ for each $q=0, \dots, \dim \Delta'_0$.
\end{lemma}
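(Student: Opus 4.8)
The statement is a question of linear algebra over the face poset of the polyhedron $\Delta'_0$, so the plan is to reduce to that combinatorics and then argue separately in the finite and infinite cases. By Proposition~\ref{prop:poset} (applied after the unimodular triangulation) $\Delta'_0$ is the Minkowski sum of the simplex $\Sigma$ on its vertices $\nu_0,\dots,\nu_k$ and the cone on the divisorial vectors $d_j$, $j\in J$ (its refined sedentarity); in particular $\Delta'_0$ is finite exactly when $J=\emptyset$, i.e.\ when $\Delta'_0=\Sigma$ is a simplex. A face of $\Delta'_0$ is encoded by a triple $(A,I,C)$ with $\emptyset\neq A\subseteq\{0,\dots,k\}$ and $I\sqcup C\subseteq J$ (the vertices used, the divisorial directions pushed to $-\infty$, those that stay finite), and whenever $j\in C$ one has $\Q\<\Delta_{(A,I,C)}\>=\Q\<\Delta_{(A,I,C\setminus j)}\>\oplus\Q\,\bar d_j$. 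A direct check with the sign conventions already fixed for $\shad$ shows that each of its three elementary constituents — contract out a vertex, contract out a finite divisorial direction, project away a divisorial direction that becomes sedentary — lowers the homological degree $r-\operatorname{sed}$ by one and preserves $q=\dim\Delta+\operatorname{sed}\Delta-r$, which gives the splitting $\mathbb W_\bullet=\bigoplus_q\mathbb W^q_\bullet$.

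For $\Delta'_0=\Sigma$ a simplex, the key step is to reinterpret $\mathbb W^q_\bullet(\Sigma)$ as a chain complex with cosheaf coefficients. Since $\mathbb W^q_r(\Sigma)=\bigoplus_{\dim\Delta=r+q}\wedge^r\Q\<\Delta\>$ and $\dim\Q\<\Delta\>=r+q$ for these $\Delta$, the duality $\wedge^rV\cong\wedge^{\dim V-r}V^*\otimes\det V$ rewrites this group as $\bigoplus_{\dim\Delta=r+q}\wedge^q(\Q\<\Delta\>)^*\otimes\det\Q\<\Delta\>=C_{r+q}(\Sigma;\mathcal G^q)$, the $(r+q)$-chains of $\Sigma$ with values in the cosheaf $\mathcal G^q\colon\Delta\mapsto\wedge^q(\Q\<\Delta\>)^*$, and the residue differential becomes the cosheaf differential; hence $H_r(\mathbb W^q_\bullet(\Sigma))=H_{r+q}(\Sigma;\mathcal G^q)$. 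Now let $\mathcal C\colon\Delta_A\mapsto\Q^A$ (with coordinate projections as corestrictions); the diagonal $\underline\Q=\mathcal G^0\hookrightarrow\mathcal C$, $1\mapsto\mathbf 1_A=(1,\dots,1)$, induces for each $q\ge1$ a short exact sequence of cosheaves $0\to\mathcal G^{q-1}\to\wedge^q\mathcal C\to\mathcal G^q\to0$ (the first map being $\omega\mapsto\mathbf 1_A\wedge\omega$). Writing $\wedge^q\mathcal C=\bigoplus_{|S|=q}\mathcal C_S$ with $\mathcal C_S$ the summand supported on faces containing $S$, the complex $C_\bullet(\Sigma;\mathcal C_S)$ is, up to a degree shift, the augmented simplicial chain complex of the simplex on $\{0,\dots,k\}\setminus S$, hence acyclic; so $H_\bullet(\Sigma;\wedge^q\mathcal C)=0$ for $q\ge1$. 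The long exact sequence then collapses to $H_n(\Sigma;\mathcal G^q)\cong H_{n-1}(\Sigma;\mathcal G^{q-1})$, so $H_n(\Sigma;\mathcal G^q)\cong H_{n-q}(\Sigma;\Q)$, which is $\Q$ for $n=q$ and $0$ otherwise. Therefore $H_r(\mathbb W^q_\bullet(\Sigma))=\Q$ for $r=0$ and $0$ for $r>0$, as required.

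For $\Delta'_0$ infinite I would fix a divisorial direction $d_{j_0}$ and stratify the faces by the status of $j_0$: absent, sedentary, or live. By the decomposition recorded in the first paragraph, each summand $W_r(\Delta)$ of a live face $\Delta=\Delta_{(A,I,C)}$ (so $j_0\in C$) splits into a $\Q\<\Delta_{(A,I,C\setminus j_0)}\>$-part and a $(\,\cdot\,)\wedge\bar d_{j_0}$-part. Among the four resulting blocks of $\mathbb W_\bullet(\Delta'_0)$, the residue map that sedentarizes $j_0$ is a degree-shift isomorphism from the "live, no $\bar d_{j_0}$" block onto the "sedentary" block, and the residue map that contracts out $d_{j_0}$ is a degree-shift isomorphism from the "live, $(\,\cdot\,)\wedge\bar d_{j_0}$" block onto the "absent" block. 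Hence $\mathbb W_\bullet(\Delta'_0)$ contains a subcomplex (absent faces together with the $(\,\cdot\,)\wedge\bar d_{j_0}$-parts of live faces) that is the mapping cone of an isomorphism, whose quotient (sedentary faces together with the remaining parts of live faces) is again the mapping cone of an isomorphism; both are acyclic, so $\mathbb W_\bullet(\Delta'_0)$ is acyclic in every degree, and no inductive input is needed.

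The main obstacle is the identification in the second paragraph (and the analogous isomorphisms of blocks in the third): one must orient all faces compatibly with the conventions governing $\shad$ and verify that, under the Poincaré-type duality $\wedge^rV\cong\wedge^{\dim V-r}V^*\otimes\det V$, the residue maps turn into the cosheaf corestrictions with exactly the correct cellular signs. This is routine but genuinely fiddly; everything else (acyclicity of a simplex, and the automatic collapse of the long exact sequence) is immediate.
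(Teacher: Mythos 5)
Your argument is correct, and its two halves relate differently to the paper's proof. For the infinite case you are essentially doing what the paper does, only phrased without tensor products: the paper writes $\mathbb W_\bullet(\Delta'_0)=\mathbb W_\bullet(\Delta)\otimes\mathbb W_\bullet(\bar\square)$ and observes that $\mathbb W_\bullet(\bar\square)$ is a tensor power of the acyclic three-term complex of a single compactified ray, whereas you peel off one divisorial direction $d_{j_0}$ and exhibit the same cancellation as a two-step filtration by mapping cones of isomorphisms (your ``live, $\wedge\bar d_{j_0}$'' $\to$ ``absent'' and ``live, no $\bar d_{j_0}$'' $\to$ ``sedentary'' blocks are exactly the two halves of the paper's three-term complex). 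For the finite case your route is genuinely different: the paper inducts on $\dim\Delta$ via the short exact sequence $0\to\mathbb W_\bullet(\Delta\setminus\nu)\to\mathbb W_\bullet(\Delta)\to\mathbb W_\bullet(\square_\nu)\to0$ and computes $\mathbb W_\bullet(\square_\nu)$ as a tensor power of a two-term complex, while you dualize $\mathbb W^q_\bullet(\Sigma)$ into the cellular chain complex of the simplex with coefficients in the cosheaf $\mathcal G^q\colon\Delta\mapsto\wedge^q(\Q\langle\Delta\rangle)^*$ and induct on $q$ using the Euler--Koszul sequence $0\to\mathcal G^{q-1}\to\wedge^q\mathcal C\to\mathcal G^q\to0$ together with acyclicity of augmented simplicial chains (the splitting $\wedge^q\mathcal C=\bigoplus_{|S|=q}\mathcal C_S$). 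What each buys: the paper's argument is shorter, needs no dualization, and, as the authors remark, works verbatim over $\Z$; yours is more conceptual in that it identifies the resolution with cellular cosheaf homology of the simplex with ``cotangent'' coefficients, at the price of the orientation/sign bookkeeping you flag (the identification of the residue differential with the cosheaf corestriction tensored with $\iota_\Omega$ on determinant lines) and of the identification $\mathcal C/\Q\mathbf 1\cong\mathcal G^1$, which uses affine independence of the vertices of each face — automatic here since the faces are unimodular simplices. Those flagged verifications are indeed routine and are the only things missing; note also that in your splitting of $\wedge^q\mathcal C$ the summand with $S$ equal to the full vertex set is not acyclic, but it only occurs for $q=\dim\Sigma+1$, which is outside the range you need.
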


\begin{proof}
Recall that any face of $X$ is the product $\Delta'_0=\Delta \times \bar \square $, where $\Delta$ is the unimodular simplex, and $\bar \square$ is the unimodular cone (compactified by the sedendary faces). Then, 
we can write the residue complex 
as the tensor product 
(with the usual alternating sign convention) 
$\mathbb W_\bullet (\Delta'_0) = \mathbb W_\bullet (\Delta) \otimes \mathbb W_\bullet (\bar\square)$. 
The residue complex $\mathbb W_\bullet (\bar\square)$ is, in turn, the $\dim (\bar\square)$-tensor power 
of the three-term residue complex $\mathbb W_\bullet(\bar\square^1)$ 
for 1-dimensional infinite face $\bar\square^1$:
$$W_0 \<\operatorname{vertex_1}\>  \leftarrow W_0\<\operatorname{ray}\>  \oplus W_0\<\operatorname{vertex_0}\> \leftarrow W_1\< \operatorname{ray}\> ,
$$
which is acyclic. Thus, we are reduced to the case when $\Delta'_0=\Delta$ is a simplex. 

We calculate the homology of $\mathbb W_\bullet (\Delta)$ by induction on dimension of $\Delta$. The case when $\Delta$ is a point is clear: $H_0=\Q$ in degree $q=0$. For the induction step pick $\nu$, a vertex of $\Delta$, and let $\square_\nu$ be the relative cone of $\Delta$ at $\nu$. Then, we have a short exact sequence of complexes 
$$0\to \mathbb W_\bullet (\Delta\setminus \nu) \to \mathbb W_\bullet (\Delta) \to \mathbb W_\bullet (\square_\nu) \to 0,
$$
where $\mathbb W_\bullet (\Delta\setminus \nu)$ is the residue complex of the face $(\Delta\setminus \nu)\prec \Delta$, and the quotient $\mathbb W_\bullet (\square_\nu)$ is the residue complex of the cone $\square_\nu$ (without the sedentary faces). The residue complex $\mathbb W_\bullet (\square_\nu)$ is the $\dim(\square_\nu)$-tensor power of one-dimensional two-term complex
$$W_0\<\operatorname{ray}\>  \oplus W_0\<\operatorname{vertex}\> \leftarrow W_1\< \operatorname{ray}\> ,
$$
which has homology $H_0 = \Q$ in degree $q=1$ and all other vanish. Thus, $\mathbb W_\bullet (\square)$ has homology $H_0= \Q$ in degree $q=\dim (\square_\nu)=\dim (\Delta)$ and all other vanish.

Note that all maps here respect $q$-grading. Then,
the induced long exact sequence in homology 
(which becomes the short exact sequence in $H_0$'s)
does the induction step.
\end{proof} 

\begin{remark}
The proof works over $\Z$. The unimodularity of $\Delta_0'$ is irrelevant over $\Q$. 
\end{remark}

\begin{proof}[Proof of Proposition \ref{prop:SI}]
The map $\partial_0$ acts linearly with respect to the $H_{2l}(\Delta'_0)$ factors (after identifying the groups $H_{2l}(\Delta')=H_{2l}(\Delta'_0)$ within the same family). Thus, we can decompose 
$$E^0_{m,\bullet}= \Gr_{m}^{F} K^{(p)}_{\bullet+m} 
= \oplus_{l=0}^\infty \bigoplus H_{2l}(\Delta'_0) \otimes W_{\bullet+s}(\Delta)
$$ 
into direct sum of complexes $H_{2l}(\Delta'_0)\otimes \mathbb W^{m-l}_\bullet(\Delta'_0)$ indexed by $(p-2l+m)$-dimensional mobile faces $\Delta'_0$ of $X$. By Lemma \ref{lemma:koszul} each $\mathbb W^{m-l}_\bullet(\Delta'_0)$ is acyclic unless $\Delta'_0$ is a finite face, in which case it is a resolution of $\Q$. Thus, the $E^1$ term consists of the single row $E^1_{m, 0}$, which coincides with the  complex \eqref{eq:E^1}
$$ \tilde{E}^1_{m,p}=\oplus_{l=0}^{\min \{m,p\}} \bigoplus H_{2l}(\Delta_0').
$$

It only remains to check that the differential $d_1=\delta_1+\partial_1$ agrees with $\gys+i_*$.
In the proof of Proposition \ref{prop:gysin} we already identified $\delta_1$ with the Gysin map: $\gys: H_{2l}({Z}^{(k)}) \to H_{2l-2}(Z^{(k+1)})$. Now we analyze what remains of the horizontal map $\partial: K^{(p)}_{\bullet} \to K^{(p)}_{\bullet-1}$ on the $E^1$ level.

Let $\Delta'$ be a finite $(p-2l-m)$-dimensional mobile face of $X$,
and let $\nu\in \Delta'$ be its vertex. Let $\bar \beta \in H_{2l}(\Delta') $ be an element of $E^1_{m,0}$. 
We want to compute the image $\partial_1 \bar\beta$ in $H_{2l}(\Delta''')$ for $\Delta''' = (\Delta' \setminus \nu)$. 

First, we represent $\bar\beta$ by an element 
$$\beta\otimes 1 \in H_{2l}(\Delta') \otimes W_0 (\Delta) \subset E^0_{m,0}= \Gr^F_m K^{(p)}_m,
$$
 where $\Delta$ is an $(m-l)$-dimensional face of $\Delta'$ containing the vertex $\nu$. Next, a lift of $\beta\otimes 1$ to $A^{(p)}_l(\Delta) \subset K^{(p)}_{m-l, l}$ can be represented by $\beta\otimes w \in H_{2l}(\Delta') \otimes W_{p-l} (\Delta')$, where $w$ is a relative volume polyvector for the pair $\Delta'\succ\Delta$. Moreover, we can take this volume polyvector to be in the form $w=\iota (u)$ where $u\in W_{p-l} (\Delta''')$.
Then, $\shad_{\Delta'''\succ\Delta''}(u) = 1 \in W_0 (\Delta'')$, where $\Delta''=(\Delta\setminus \nu)$.  

On the other hand, $\dd (\beta \otimes w) = \beta\otimes w \sim i_*(\beta) \otimes u$ in $A^{(p)}_l(\Delta'')$. In $E^0_{m-1,0}$ 
({\it i.e.}, after applying the residue map) this becomes $i_*(\beta) \otimes 1 \in  H_{2l}(\Delta''') \otimes W_0 (\Delta'')$. 
Thus, $\partial_1$ is the pushforward map $i_*$.
\end{proof}

\end{document}